\documentclass[oneside]{amsart}
\usepackage[latin9]{inputenc}
\usepackage{geometry}
\geometry{verbose,tmargin=3cm,bmargin=3cm,lmargin=3cm,rmargin=3cm}
\usepackage{xcolor}
\usepackage{amstext}
\usepackage{amsthm}
\usepackage{amssymb}

\makeatletter
\numberwithin{equation}{section}
\numberwithin{figure}{section}
\theoremstyle{plain}
\newtheorem{thm}{\protect\theoremname}[section]
  \theoremstyle{plain}
  \newtheorem{prop}[thm]{\protect\propositionname}
  \theoremstyle{remark}
  \newtheorem{rem}[thm]{\protect\remarkname}
  \theoremstyle{plain}
  \newtheorem{lem}[thm]{\protect\lemmaname}
  \theoremstyle{plain}
  \newtheorem{cor}[thm]{\protect\corollaryname}

\makeatother

  \providecommand{\corollaryname}{Corollary}
  \providecommand{\lemmaname}{Lemma}
  \providecommand{\propositionname}{Proposition}
  \providecommand{\remarkname}{Remark}
\providecommand{\theoremname}{Theorem}

\begin{document}

\title{The Fundamental Solution to One-Dimensional Degenerate Diffusion
Equation, I}

\author{Linan Chen and Ian Weih-Wadman}
\begin{abstract}
In this work we adopt a combination of probabilistic approach and
analytic methods to study the fundamental solutions to variations
of the Wright-Fisher equation in one dimension. To be specific, we
consider a diffusion equation on $\left(0,\infty\right)$ whose diffusion
coefficient vanishes at the boundary 0, equipped with the Cauchy initial
data and the Dirichlet boundary condition. One type of diffusion operator
that has been extensively studied is the one whose diffusion coefficient
vanishes linearly at 0. Our main goal is to extend the study to cases
when the diffusion coefficient has a general order of degeneracy.
We primarily focus on the fundamental solution to such a degenerate
diffusion equation. In particular, we study the regularity properties
of the fundamental solution near 0, and investigate how the order
of degeneracy of the diffusion operator and the Dirichlet boundary
condition jointly affect these properties. We also provide estimates
for the fundamental solution and its derivatives near 0.
\end{abstract}

\keywords{the Wright-Fisher equation, degenerate diffusion equation, estimates
on the fundamental solution, regularity of the solution at the boundary }

\address{Department of Mathematics and Statistics, McGill University, 805
Sherbrooke St. West, Montreal, Quebec, H3A 0B9, Canada.}

\email{linan.chen@mcgill.ca, ian.weih-wadman@mail.mcgill.ca}

\thanks{Both authors were partially supported by the NSERC Discovery Grant
(No. 241023).}

\subjclass[2000]{35K20, 35K65, 35Q92}
\maketitle

\section{Introduction}

In this article we consider the Cauchy initial value problem with
the Dirichlet boundary condition where, given $f\in C_{b}\left(\left(0,\infty\right)\right)$,
we look for $u_{f}\left(x,t\right)\in C^{2,1}\left(\left(0,\infty\right)^{2}\right)$
that satisfies
\begin{equation}
\begin{array}{c}
\partial_{t}u_{f}\left(x,t\right)=a\left(x\right)\partial_{x}^{2}u_{f}\left(x,t\right)+b\left(x\right)\partial_{x}u_{f}\left(x,t\right)\text{ for }\left(x,t\right)\in\left(0,\infty\right)^{2},\\
\lim_{t\searrow0}u_{f}\left(x,t\right)=f\left(x\right)\text{ for }x\in\left(0,\infty\right)\text{ and }\lim_{x\searrow0}u_{f}\left(x,t\right)=0\text{ for }t\in\left(0,\infty\right).
\end{array}\label{eq:general IVP equation}
\end{equation}
Set $L:=a\left(x\right)\partial_{x}^{2}+b\left(x\right)\partial_{x}$.
We are interested in studying the fundamental solution to (\ref{eq:general IVP equation}),
denoted by $p\left(x,y,t\right)$, under the assumption that the diffusion
coefficient $a\left(x\right)$, while being positive on $\left(0,\infty\right)$,
becomes degenerate at the boundary 0, i.e., $\lim_{x\searrow0}a\left(x\right)=0$.
On one hand, because $L$ is not uniformly elliptic on $\left(0,\infty\right)$,
standard techniques in studying fundamental solutions to uniformly
parabolic equations do not apply to $p\left(x,y,t\right)$. On the
other hand, one does expect that $p\left(x,y,t\right)$ exhibits different
properties from a standard heat kernel due to the degeneracy of $a\left(x\right)$,
particularly when $x,y$ are close to the boundary 0. Besides, when
$x,y$ are near 0, $p\left(x,y,t\right)$ also ``feels'' strongly
the influence of the Dirichlet boundary condition, which will force
$p\left(x,y,t\right)$ to vanish at $x=0$. With these considerations
in mind, we want to conduct a study of $p\left(x,y,t\right)$, particularly
to understand how the degeneracy of $a\left(x\right)$ and the Dirichlet
boundary condition together determine the regularity properties of
$p\left(x,y,t\right)$ near 0. In order to carry out this project,
we will impose some conditions on $a\left(x\right)$ and $b\left(x\right)$
that will be made explicit later in this section. 

\subsection{Some previous works on degenerate diffusions. }

Our work is primarily motivated by an earlier work \cite{siamwfeq}
on the well known Wright-Fisher diffusion in the literature of population
genetics:
\begin{equation}
\begin{array}{c}
\partial_{t}u_{f}\left(x,t\right)=x\left(1-x\right)\partial_{x}^{2}u_{f}\left(x,t\right)\text{ for }\left(x,t\right)\in\left(0,1\right)\times\left(0,\infty\right),\\
\lim_{t\searrow0}u_{f}\left(x,t\right)=f\left(x\right)\text{ for }x\in\left(0,1\right),\\
\text{ and }\lim_{x\searrow0}u_{f}\left(x,t\right)=\lim_{x\nearrow1}u_{f}\left(x,t\right)=0\text{ for }t\in\left(0,\infty\right).
\end{array}\label{eq: classical WF eq}
\end{equation}
We will give a brief review of the method and the results in \cite{siamwfeq}.
Set $L_{WF}:=x\left(1-x\right)\partial_{x}^{2}$ and let $p_{WF}\left(x,y,t\right)$
be the fundamental solution to (\ref{eq: classical WF eq}). By studying
the diffusion process associated with $L_{WF}$, the authors of \cite{siamwfeq}
investigate various aspects of $p_{WF}\left(x,y,t\right)$ with a
particular emphasis on its behavior near the boundaries 0 and 1. By
the symmetry of $L_{WF}$ on $\left[0,1\right]$, $p_{WF}\left(x,y,t\right)$
behaves similarly near both boundaries, so it is enough to focus on
one of the boundaries, say, 0. With that in mind, the authors of \cite{siamwfeq}
first consider the operator $L_{0}:=z\partial_{z}^{2}$ on $\left(0,\infty\right)$
and solve the following model equation
\begin{equation}
\begin{array}{c}
\partial_{t}v_{g}\left(z,t\right)=L_{0}v_{g}\left(z,t\right)\text{ for }\left(z,t\right)\in\left(0,\infty\right)^{2},\\
\lim_{t\searrow0}v_{g}\left(z,t\right)=g\left(z\right)\text{ for }z\in\left(0,\infty\right)\text{ and }\lim_{z\searrow0}v_{g}\left(z,t\right)=0\text{ for }t\in\left(0,\infty\right),
\end{array}\label{eq:model equation for WF}
\end{equation}
and find the fundamental solution to (\ref{eq:model equation for WF})
to be
\begin{equation}
q_{0}\left(z,w,t\right)=\frac{z}{t^{2}}e^{-\frac{z+w}{t}}I_{1}\left(\frac{zw}{t^{2}}\right)\text{ for }\left(z,w,t\right)\in\left(0,\infty\right)^{3},\label{eq:q_0 of WF model equation}
\end{equation}
where $I_{1}$ is the modified Bessel function. Next, (\ref{eq: classical WF eq})
is connected to (\ref{eq:model equation for WF}) with a series of
transformations involving localization and perturbation techniques,
which gives rise to a construction (locally near 0) of $p_{WF}\left(x,y,t\right)$
based on $q_{0}\left(z,w,t\right)$. It is shown that for every $\epsilon>0$,
$\left(x,y,t\right)\mapsto y\left(1-y\right)p_{WF}\left(x,y,t\right)$
is smooth with bounded derivatives of all orders on $\left(0,1\right)^{2}\times\left(\epsilon,\infty\right)$.
Furthermore, although $p_{WF}\left(x,y,t\right)$ itself does not
have a closed-form expression, $q_{0}\left(z,w,t\right)$ provides
a sharp estimate for $p_{WF}\left(x,y,t\right)$ locally near 0 when
$t$ is small. To be specific, if
\[
\psi\left(x\right):=\left(\arcsin\sqrt{x}\right)^{2}\text{ for }x\in\left(0,1\right)
\]
and
\[
p_{WF}^{approx.}\left(x,y,t\right):=\frac{q_{0}\left(\psi\left(x\right),\psi\left(y\right),t\right)\psi\left(y\right)}{\sqrt{\psi^{\prime}\left(x\right)\psi^{\prime}\left(y\right)}y\left(1-y\right)}\text{ for }\left(x,y,t\right)\in\left(0,1\right)^{2}\times\left(0,\infty\right),
\]
then for every $0<\alpha<\beta<\gamma$, there exists a constant $C_{\alpha,\beta,\gamma}>0$
such that 
\begin{equation}
\left|\frac{p_{WF}\left(x,y,t\right)}{p_{WF}^{approx.}\left(x,y,t\right)}-1\right|\leq C_{\alpha,\beta,\gamma}t\label{eq:estimate for p_WF}
\end{equation}
for every $t\in(0,1)$ and every $\left(x,y\right)\in\left(0,\alpha\right)^{2}$
with $\left|\arcsin\sqrt{x}-\arcsin\sqrt{y}\right|\leq\gamma-\beta$. 

The estimate (\ref{eq:estimate for p_WF}) is useful for several reasons.
First, it provides the asymptotics of $p_{WF}\left(x,y,t\right)$
in $t$ when $t$ is small and $x,y$ are close to the boundaries.
On one hand, the pioneer work of Kimura \cite{Kimura} gives a construction
of $p_{WF}\left(x,y,t\right)$ as an expansion of the eigenfunctions
of $L_{WF}$; such an expansion describes well the long-term (i.e.,
for large $t$) properties of $p_{WF}\left(x,y,t\right)$ but says
little on its short-term (i.e., for small $t$) properties. On the
other hand, when $\left(x,y\right)$ is away from the boundaries,
one expects that $p_{WF}\left(x,y,t\right)$ behaves similarly as
the fundamental solution to a strictly parabolic equation. So, (\ref{eq:estimate for p_WF})
fills the ``gap'' by providing information on the short-term near-boundary
behaviors of $p_{WF}\left(x,y,t\right)$. Secondly, (\ref{eq:estimate for p_WF})
is more accurate than the general heat kernel estimate. Namely, if
one could overcome the degeneracy of $L_{WF}$ and apply the general
estimates on kernels of parabolic equations (see, e.g., $\mathsection4$
of \cite{PDEStroock}), then one would get that for every $\delta\in(0,1]$,
there exists $C_{\delta}>1$ such that for every $\left(x,y\right)\in\left(0,1\right)^{2}$
and every sufficiently small $t>0$,
\begin{equation}
\frac{C_{\delta}^{-1}}{V_{x,t}}\exp\left(-\frac{d\left(x,y\right)^{2}}{2\left(1-\delta\right)t}\right)\leq p_{WF}\left(x,y,t\right)\leq\frac{C_{\delta}}{V_{x,t}}\exp\left(-\frac{d\left(x,y\right)^{2}}{2\left(1+\delta\right)t}\right),\label{eq:general heat kernel estimate}
\end{equation}
where $d\left(x,y\right)$ is the distance between $x$ and $y$ under
the Riemannian metric on $\left(0,1\right)$ corresponding to $L_{WF}$,
and $V_{x,t}$ is the volume (under the measure induced by the Riemannian
metric) of the ball centered at $x$ with radius $\sqrt{t}$. Although
$\delta>0$ can be arbitrarily small, (\ref{eq:general heat kernel estimate})
does not lead to an approximation of $p_{WF}\left(x,y,t\right)$ whose
ratio with $p_{WF}\left(x,y,t\right)$ can be controlled. So (\ref{eq:estimate for p_WF})
is a strictly sharper estimate than (\ref{eq:general heat kernel estimate})
on $p_{WF}\left(x,y,t\right)$ for small $t$. Moreover, $p_{WF}^{approx.}\left(x,y,t\right)$
has an exact formula in terms of special functions, and the value
of $C_{\alpha,\beta,\gamma}$ is also made explicit in \cite{siamwfeq}.
Hence, (\ref{eq:estimate for p_WF}) is easily accessible in computational
applications of the Wright-Fisher equation.

Independently via an analytic approach, Epstein-Mazzeo \cite{wfeq_Epstein_Mazzeo}
studies the Wright-Fisher equation in a more general setting with
the operator being 
\[
L=x\left(1-x\right)\partial_{x}^{2}+b\left(x\right)\partial_{x}\text{ for }x\in\left(0,1\right),
\]
where $b\left(x\right)$ is smooth on $\left(0,1\right)$ and pointing
inward at the boundaries, i.e., $b\left(0\right)\geq0$ and $b\left(1\right)\leq0$.
Instead of the Dirichlet condition, \cite{wfeq_Epstein_Mazzeo} adopts
the \emph{zero flux} boundary condition:
\[
\lim_{x\searrow0+}x^{b\left(0\right)}\partial_{x}u\left(x\right)=\lim_{x\nearrow1-}\left(1-x\right)^{-b\left(1\right)}\partial_{x}u\left(x\right)=0\text{ for every }t>0.
\]
Under the zero flux condition, the authors of \cite{wfeq_Epstein_Mazzeo}
develop a sharp regularity theory for the solutions, and also derive
the precise asymptotics of the solutions near the boundaries for small
$t$. Epstein and Mazzeo further generalize their work to higher dimensions
by considering operators analogous to $L_{0}$ on\emph{ manifolds
with corners}. To be specific, they assume that for every point $P$
on the boundary of a compact manifold, a neighborhood of $P$ is homeomorphic
to a neighborhood of the origin in $\mathbb{R}_{+}^{n}\times\mathbb{R}^{m}$,
and the operator, referred to as a\emph{ generalized Kimura diffusion
operator}, can be written in local coordinates as:
\[
\begin{split}L= & \sum_{i=1}^{n}\left(x_{i}\partial_{x_{i}}^{2}+b_{i}\left(z\right)\partial_{x_{i}}\right)+\sum_{i,j=1}^{n}x_{i}x_{j}a_{ij}\left(z\right)\partial_{x_{i}}\partial_{x_{j}}\\
 & \hspace{0.4cm}+\sum_{i=1}^{n}\sum_{l=1}^{m}x_{i}c_{il}\left(z\right)\partial_{x_{i}}\partial_{y_{l}}+\sum_{k.l=1}^{m}d_{kl}\left(z\right)\partial_{y_{k}}\partial_{y_{l}}+\sum_{k=1}^{m}e_{k}\left(z\right)\partial_{y_{k}},
\end{split}
\]
where $z=\left(x_{1},\cdots,x_{n},y_{1},\cdots,y_{m}\right)\in\mathbb{R}_{+}^{n}\times\mathbb{R}^{m}$,
the coefficients are smooth and the \emph{weight functions} $b_{i}\left(z\right)\geq0$
when $x_{i}=0$ for each $i$. Epstein and Mazzeo conduct a comprehensive
study on various aspects of such degenerate diffusion equations, including
the Hölder space of the solutions, the maximum principle, the Harnack
inequality, etc.. We refer readers to \cite{EM11,EM13,EM14} for the
details of their results. Related investigations of generalized Kimura
diffusions also include \cite{C0_smooth_parab_Kimura_op,ExiUniq_Markov_Kimura_diff_singulardrift,FeynmanKac_HarnackIneq_deg_diff,tran_prob_deg_diff_manifold}.

Another natural approach in studying the fundamental solution to a
diffusion equation is to consider the corresponding Itô stochastic
integral equation. For our original problem (\ref{eq:general IVP equation}),
we look for a stochastic process $\left\{ X\left(x,t\right):\left(x,t\right)\in\left[0,\infty\right)^{2}\right\} $
that satisfies 
\begin{equation}
X\left(x,t\right)=x+\int_{0}^{t}\sqrt{2a\left(X\left(x,s\right)\right)}dB\left(s\right)+\int_{0}^{t}b\left(X\left(x,s\right)\right)ds\text{ for }\left(x,t\right)\in\left[0,\infty\right)^{2},\label{eq: Ito integral equation for (i)}
\end{equation}
where $\left\{ B\left(s\right):s\geq0\right\} $ is a standard Brownian
motion. Besides, we require that 
\begin{equation}
X\left(x,t\right)\equiv0\text{ for every }x\geq0\text{ and }t\geq\zeta_{0}^{X}\left(x\right),\label{eq:boundary constraint on X(x,t)}
\end{equation}
where $\zeta_{0}^{X}\left(x\right)$ is the hitting time at $0$ of
$X\left(x,t\right)$, i.e., 
\[
\zeta_{0}^{X}\left(x\right):=\inf\left\{ s\geq0:X\left(x,s\right)=0\right\} .
\]
For our purpose, it is sufficient to show that (\ref{eq: Ito integral equation for (i)})
has a weak solution that is unique in law, or equivalently, the martingale
problem associated with $L$ is wellposed; if $\left\{ X\left(x,t\right):\left(x,t\right)\in\left[0,\infty\right)^{2}\right\} $
is the unique solution, then one would expect that for every $\left(x,t\right)\in\left(0,\infty\right)^{2}$,
$y\mapsto p\left(x,y,t\right)$ is given by the probability density
function of $X\left(x,t\right)$ over the set $\left\{ t<\zeta_{0}^{X}\left(x\right)\right\} $.

The existence and the uniqueness of solutions to stochastic integral
equations with degenerate diffusion coefficients have been well studied
(see, e.g., \cite{diff_conti_coeff,singular_stochastic_diff_equa,est_dist_stoch_integral,transf_phasesp_diffproc_removedrift,Ethier76,path_unique_SDE_non-Lip_coeff,deg_SDE_non-Lip_coeff}\textcolor{purple}{{}
}and the references therein). Specifically in the first-order degeneracy
case, when $b\left(x\right)$ is Lipschitz continuous and has at most
linear growth, an application of the theorem of Yamada-Watanabe (\cite{Yamada-Watanabe})
implies the path-wise uniqueness of the solution to (\ref{eq: Ito integral equation for (i)}).
Under the same assumptions on $b\left(x\right),$ Engelbert-Schmidt
\cite{strong_Markov_locmart_solu_1D_SDE} and Cherny \cite{uniq_law_path_uniq_SDE}
complement the Yamada-Watanabe theorem and guarantee that there exists
a path-wise unique strong solution to (\ref{eq: Ito integral equation for (i)}).
In the $d-$dimensional setting, Athreya-Barlow-Bass-Perkins \cite{AthreyaBarlowBassPerkins02}
proves the wellposedness of the martingale problem associated with
the operator 
\[
L=\sum_{i=1}^{d}x_{i}\gamma_{i}\left(x\right)\partial_{x_{i}}^{2}+\sum_{i=1}^{d}b_{i}\left(x\right)\partial_{x_{i}}\text{ for }x\in\mathbb{R}_{+}^{d}
\]
where, for each $i$, $\gamma_{i},b_{i}$ are continuous on $\mathbb{R}_{+}^{d}$,
$\gamma_{i}>0$ on $\mathbb{R}_{+}^{d}$, $b_{i}>0$ on $\partial\mathbb{R}_{+}^{d}$
and $b_{i}$ has at most linear growth. Further, Bass-Perkins \cite{BassPerkins02}
establishes the same conclusion with the condition ``$b_{i}>0$ on
$\partial\mathbb{R}_{+}^{d}$'' relaxed to ``$b_{i}\geq0$ on $\partial\mathbb{R}_{+}^{d}$'',
at the expense of $\gamma_{i}$ and $b_{i}$ being Hölder continuous
on $\mathbb{R}_{+}^{d}$. 

The works mentioned above constitute only a small subset of the rich
literature on degenerate diffusion equations. For example, degenerate
diffusions have also been treated in the context of the measure-valued
process (see, e.g., \cite{measure_value_proc_popul_gen,FV_processes_popul_gen,meas_Markov_proc,resolv_est_FV_uniq_marting,DW_suerproc_meas_diff,wellposed_mart_deg_diff_dynamic_population}),
as well as via the semigroup approach (see, e.g., \cite{C0_semigroup_diffop_ventcel_bdrycond,highly_deg_parabolic_bvp,deg_parabolic_wentzel_bdrycond,analy_semigroup_deg_ellip_operator_nonlinear_Cauchy,deg_evo_eq_reg_semigroup,deg_selfad_evoeq_unitint}). 

\subsection{Our setting.}

All the results we have reviewed above apply to the case when the
diffusion coefficient has first-order degeneracy at the boundary (or
boundaries). \textcolor{black}{The linear degeneracy in the Wright-Fisher
model is inherited from the corresponding discrete models that were
used to model the propagation of a certain allele in population genetics.}\textcolor{purple}{{}
}But from a mathematical point of view, it is natural to consider
the problem when $a\left(x\right)$ does not necessarily degenerate
linearly at the boundary. If $a\left(x\right)$ has a general order
of degeneracy at $0$, one may wonder ``what regularity properties
$p\left(x,y,t\right)$ possesses near 0'', and ``whether it is still
possible to derive a sharp estimate on $p\left(x,y,t\right)$ as in
(\ref{eq:estimate for p_WF})''. The main goal of our work is to
seek answers to these questions. We will restrict ourselves to $a\left(x\right)$
and $b\left(x\right)$ that satisfy the following conditions:\textbf{}\\
\textbf{}\\
\textbf{Condition 1.} \emph{$a\left(x\right)$ is positive and smooth
on $\left(0,\infty\right)$, $a\left(x\right)$ does not vanish too
fast at $0$ in the sense that
\begin{equation}
\lim_{c\searrow0}\int_{c}^{1}\frac{ds}{\sqrt{a\left(s\right)}}<\infty,\label{eq: cond on a(x)}
\end{equation}
and $a\left(x\right)$ does not grow too fast at $\infty$ in the
sense that 
\begin{equation}
\int_{1}^{\infty}\frac{ds}{\sqrt{a\left(s\right)}}=\infty.\label{eq: growth rate of a at infinity}
\end{equation}
}\textbf{Condition 2.}\emph{ $b\left(x\right)$ is smooth on $\left(0,\infty\right)$
such that 
\begin{equation}
\lim_{x\searrow0}\frac{2b\left(x\right)-a^{\prime}\left(x\right)}{4\sqrt{a\left(x\right)}}\int_{0}^{x}\frac{ds}{\sqrt{a\left(s\right)}}\in\left(-\infty,\frac{1}{2}\right),\label{eq: limit existence condition on b}
\end{equation}
i.e., the limit in (\ref{eq: limit existence condition on b}) exists
and is less than $\frac{1}{2}$.}\\

(\ref{eq: cond on a(x)}) and (\ref{eq: limit existence condition on b})
guarantee that the following functions are well defined and smooth
on $\left(0,\infty\right)$: 
\[
\phi:\;x\in\left(0,\infty\right)\mapsto\phi\left(x\right):=\frac{1}{4}\left(\int_{0}^{x}\frac{ds}{\sqrt{a\left(s\right)}}\right)^{2}
\]
and 
\[
d:\;x\in\left(0,\infty\right)\mapsto d\left(x\right):=\frac{2b\left(x\right)-a^{\prime}\left(x\right)}{4\sqrt{a\left(x\right)}}\int_{0}^{x}\frac{ds}{\sqrt{a\left(s\right)}}+\frac{1}{2}-\nu,
\]
where $\nu$ is the constant such that $\lim_{x\searrow0}d\left(x\right)=0$
and $\nu<1$. The constant $\nu$ will play an important role in characterizing
the ``attainability'' type of the boundary 0, which we will discuss
shortly. Our last condition on $a\left(x\right)$ and $b\left(x\right)$
is given in terms of $\phi\left(x\right)$ and $d\left(x\right)$.\textbf{}\\
\textbf{}\\
\textbf{Condition 3.}\emph{ $a\left(x\right)$ and $b\left(x\right)$
are such that if $\phi\left(x\right)$ and $d\left(x\right)$ are
defined as above, then 
\begin{equation}
\sup_{x\in(0,\infty)}\frac{\left|d\left(x\right)\right|}{\sqrt{\phi\left(x\right)}}<\infty\text{ and }\sup_{x\in\left(0,\infty\right)}\frac{\left|d^{\prime}\left(x\right)\right|}{\phi^{\prime}\left(x\right)}<\infty.\label{eq:conditions on b^tilde and z}
\end{equation}
}

Clearly $\phi$ is a strictly increasing function on $\left(0,\infty\right)$,
and by (\ref{eq: growth rate of a at infinity}), $\phi$ is also
surjective on $\left(0,\infty\right)$. Let $\psi:z\in\left(0,\infty\right)\mapsto\psi\left(z\right)\in\left(0,\infty\right)$
be the inverse function of $\phi,$ i.e., $\psi\left(\phi\left(x\right)\right)=x$
for every $x>0$. Then, the conditions in (\ref{eq:conditions on b^tilde and z})
are equivalent to saying that if we set 
\[
\tilde{d}:z\in\left(0,\infty\right)\mapsto\tilde{d}\left(z\right):=d\left(\psi\left(z\right)\right),
\]
then $\tilde{d}\left(z\right)$ is smooth with bounded first derivative
on $\left(0,\infty\right)$ and has at most $\sqrt{z}$ growth rate. 

Inspired by the methods in \cite{siamwfeq}, the approach we adopt
to study $p\left(x,y,t\right)$ is primarily a probabilistic one,
combined with analytic techniques. The general idea is to treat $p\left(x,y,t\right)$
as the transition probability density of the diffusion process associated
with $L$, i.e., the solution $\left\{ X\left(x,t\right):\left(x,t\right)\in\left[0,\infty\right)^{2}\right\} $
to (\ref{eq: Ito integral equation for (i)}) as introduced in $\mathsection1.1$.
However, for general $a\left(x\right)$ and $b\left(x\right)$ that
satisfy Condition 1-3, the standard theory on stochastic integral
equations does not directly imply the existence or the uniqueness
of such a solution. Instead, we first consider the following model
equation
\[
\partial_{t}v\left(z,t\right)=z\partial_{z}^{2}v\left(z,t\right)+\nu\partial_{z}v\left(z,t\right)\text{ for }\left(z,t\right)\in\left(0,\infty\right)^{2},
\]
where $\nu$ is the same constant as in Condition 2. In $\mathsection2$,
we present the complete solution to the model equation including the
exact formula for the fundamental solution, denoted by $q_{\nu}\left(z,w,t\right)$,
and the regularity properties of $q_{\nu}\left(z,w,t\right)$ in spatial
variables near 0. In $\mathsection3$, through a series of transformations,
we connect the original problem (\ref{eq:general IVP equation}) to
the model equation, which leads to a construction of $p\left(x,y,t\right)$
as well as a sharp estimate of $p\left(x,y,t\right)$ for $x,y$ near
0 and for small $t$. $\mathsection4$ delves into the finer structure
of the regularity of $p\left(x,y,t\right)$, where we look for as
accurate as possible estimates on the derivatives of $p\left(x,y,t\right)$
in spatial variables. In $\mathsection5$, we apply our results to
a simple but revealing example: $a\left(x\right)=x^{\alpha}$ for
some $\alpha\in\left[0,2\right]$. For this specific example, it is
clear that $\alpha$ measures the level of degeneracy of the diffusion.
We will see explicitly how $\alpha$ affects various aspects of the
diffusion, including the boundary classification, the regularity of
$p\left(x,y,t\right)$, the hitting distribution at 0 of the underlying
diffusion process, etc..

\subsection{Boundary classification. }

Before getting down to solving (\ref{eq:general IVP equation}), let
us first examine the diffusion operator $L$ in terms of the classification
of the boundary $0$. To this end, we fix an arbitrary $x_{0}>0$
and define, for every $x>0$,
\[
s\left(x\right):=\exp\left(-\int_{x_{0}}^{x}\frac{b\left(u\right)}{a\left(u\right)}du\right),\;S\left(x\right):=\int_{x_{0}}^{x}s\left(u\right)du\text{ and }M\left(x\right):=\int_{x_{0}}^{x}\frac{1}{2a\left(u\right)s\left(u\right)}du.
\]
The functions $S\left(x\right)$ and $M\left(x\right)$ are known
respectively as the \emph{scale measure} and the \emph{speed measure}.
We consider the limits
\[
S_{0}:=-\lim_{x\searrow0}S\left(x\right),\,M_{0}:=-\lim_{x\searrow0}M\left(x\right),
\]
\[
\Sigma:=\lim_{x\searrow0}\int_{x}^{x_{0}}\left(M\left(x_{0}\right)-M\left(u\right)\right)dS\left(u\right)\text{ and }N:=\lim_{x\searrow0}\int_{x}^{x_{0}}\left(S\left(x_{0}\right)-S\left(u\right)\right)dM\left(u\right).
\]
Let $\left\{ X\left(x,t\right):\left(x,t\right)\in\left[0,\infty\right)^{2}\right\} $
be a diffusion process satisfying (\ref{eq: Ito integral equation for (i)}).
For now, let us ignore the constraint (\ref{eq:boundary constraint on X(x,t)})
on $X\left(x,t\right)$ at 0. Then, the behavior of $X\left(x,t\right)$
near $0$ can be classified according to whether each of $S_{0}$,
$M_{0}$, $\Sigma$ or $N$ is finite or infinite (see, e.g., $\mathsection15.6$
of \cite{Karlin_Taylor}). As a demonstration, we examine the boundary
classification of the example with $a\left(x\right)=x^{\alpha}$,
$\alpha\in\left[0,2\right]$, and $b\left(x\right)\equiv0$. \emph{}\\
\emph{}\\
\emph{Case 1.} When $\alpha=2$, $S_{0}<\infty$ and $M_{0}=\Sigma=N=\infty$,
and hence the boundary 0 is a \emph{natural} \emph{boundary} and
\[
\mathbb{P}\left(\zeta_{0}^{X}\left(x\right)=\infty\right)=1\text{ for every }x>0.
\]
In other words, one could omit 0 from the state space without affecting
the behavior of any non-trivial sample path of the diffusion process.\emph{}\\
\emph{}\\
\emph{Case 2.}\textbf{ }When $\alpha\in[1,2)$, $S_{0}<\infty$, $M_{0}=\infty$,
$\Sigma<\infty$ and $N=\infty$, and hence $0$ is an \emph{exit
boundary}. In this case we have that
\[
\mathbb{P}\left(\zeta_{0}^{X}\left(x\right)<\infty\right)>0\text{ for every }x>0
\]
and 
\[
X\left(x,t\right)\equiv0\text{ for all }t\geq\zeta_{0}^{X}\left(x\right).
\]
Once hitting $0$, $X\left(x,t\right)$ is ``stuck'' there, so (\ref{eq:boundary constraint on X(x,t)})
is naturally satisfied and hence the Dirichlet boundary condition
is redundant in this case.\emph{}\\
\emph{}\\
\emph{Case 3.}\textbf{ }When $\alpha\in[0,1)$, $S_{0}$, $M_{0}$,
$\Sigma$ and $N$ are all finite, so $0$ is a \emph{regular boundary}.
We still have that 
\[
\mathbb{P}\left(\zeta_{0}^{X}\left(x\right)<\infty\right)>0\text{ for every }x>0.
\]
In general, upon hitting a regular boundary, the diffusion process
may leave or re-enter the interior of the domain. Therefore, in order
to fully characterize $X\left(x,t\right)$, we need to specify its
behavior at 0, which can range from absorption (i.e., the Dirichlet
boundary condition) to reflection (i.e., the Neumann boundary condition),
or a combination of both (i.e., ``sticky'' boundary condition).
In this case, imposing (\ref{eq:boundary constraint on X(x,t)}) on
$X\left(x,t\right)$ does have an impact on $p\left(x,y,t\right)$. 

\subsection{Some concrete examples. }

Let us review some diffusion equations, for each of which $p\left(x,y,t\right)$
is already known, from the family of equations where $a\left(x\right)=x^{\alpha}$,
$\alpha\in\left[0,2\right]$, and $b\left(x\right)\equiv0$. We will
see how the boundary classification and the imposed boundary condition
affect the derivatives of $p\left(x,y,t\right)$ in $x$ when $x,y$
are near 0.\emph{ }Since in general $p\left(x,y,t\right)$ possesses
certain level of symmetry in $\left(x,y\right)$, one can study the
derivatives of $p\left(x,y,t\right)$ in $y$ via its derivatives
in $x$\emph{.}\\
\emph{}\\
\emph{Example 1. }The easiest example is the case $\alpha=0$, i.e.,
the heat equation on the positive half real line: 
\[
\partial_{t}u\left(x,t\right)=\partial_{x}^{2}u\left(x,t\right)\text{ for }\left(x,t\right)\in\left(0,\infty\right)^{2}.
\]
As we have mentioned above, $0$ is a regular boundary. Upon imposing
the Dirichlet boundary condition, $p\left(x,\cdot,t\right)$ is the
probability density function of $X\left(x,t\right)=\sqrt{2}B\left(t\right)+x$
over the set $\left\{ t<\zeta_{0}^{X}\left(x\right)\right\} $, and
hence
\[
p\left(x,y,t\right)=\frac{1}{\sqrt{\pi t}}e^{-\frac{x^{2}+y^{2}}{4t}}\sinh\left(\frac{xy}{2t}\right)\text{ for every }\left(x,y,t\right)\in\left(0,\infty\right)^{3};
\]
upon imposing the Neumann boundary condition, $p\left(x,\cdot,t\right)$
is the probability density function of $\left|\sqrt{2}B\left(t\right)+x\right|$,
which is 
\[
p\left(x,y,t\right)=\frac{1}{\sqrt{\pi t}}e^{-\frac{x^{2}+y^{2}}{4t}}\cosh\left(\frac{xy}{2t}\right)\text{ for every }\left(x,y,t\right)\in\left(0,\infty\right)^{3}.
\]
We observe that in both cases for fixed $t>0$, the derivatives of
$x\mapsto p\left(x,y,t\right)$ of all orders stay bounded when $x,y$
are near 0 \emph{.}\\
\emph{}\\
\emph{Example 2. }Next, we look at the case when $\alpha=2$, i.e.,
\[
\partial_{t}u\left(x,t\right)=x^{2}\partial_{x}^{2}u\left(x,t\right)\text{ for }\left(x,t\right)\in\left(0,\infty\right)^{2}.
\]
The underlying diffusion process is given by
\[
X\left(x,t\right)=x\exp\left(\sqrt{2}B\left(t\right)-t\right)\text{ for }\left(x,t\right)\in\left[0,\infty\right)^{2}.
\]
It confirms that $0$ is a natural boundary. One can immediately check
that for every $\left(x,y,t\right)\in\left(0,\infty\right)^{3}$
\[
p\left(x,y,t\right):=y^{-2}\cdot\sqrt{\frac{xy}{4\pi t}}\exp\left[-\frac{\left(\ln y-\ln x\right)^{2}}{4t}-\frac{t}{4}\right].
\]
This time we have that for fixed $t>0$, $\partial_{x}p\left(x,y,t\right)$
becomes unbounded as $\left(x,y\right)$ approaches the origin along
the diagonal.\emph{}\\
\emph{}\\
\emph{Example 3. }When $\alpha$ is between 0 and 2, one solvable
case is $\alpha=1$, which is exactly (\ref{eq:model equation for WF}).
This time $0$ is an exit boundary, and we have reviewed in (\ref{eq:q_0 of WF model equation})
that its fundamental solution is 
\[
p\left(x,y,t\right)=\frac{x}{t^{2}}e^{-\frac{x+y}{t}}I_{1}\left(\frac{xy}{t^{2}}\right)
\]
which, for fixed $t>0$, has bounded derivatives in $x$ of all orders
for $x,y$ near 0. \\

Seeing from the three examples above, one may speculate that whether
or not $p\left(x,y,t\right)$ has bounded derivatives in $x$ near
the boundary depends on whether the boundary is \emph{attainable}
(i.e., a regular boundary or an exit boundary) or \emph{unattainable}
(e.g., a natural boundary). However, the next example disproves this
speculation. \\
\emph{}\\
\emph{Example 4. }Consider 
\[
\partial_{t}u\left(x,t\right)=x\partial_{x}^{2}u\left(x,t\right)+\frac{1}{2}\partial_{x}u\left(x,t\right)\text{ for }\left(x,t\right)\in\left(0,\infty\right)^{2}.
\]
One can easily check that $0$ is a regular boundary and the underlying
diffusion process is
\[
X\left(x,t\right)=\left(\sqrt{x}+\frac{1}{\sqrt{2}}B\left(t\right)\right)^{2}\text{ for }\left(x,t\right)\in\left[0,\infty\right)^{2}.
\]
Therefore, without specifying any boundary condition, for every $\left(x,y,t\right)\in\left(0,\infty\right)^{3}$,
\[
p\left(x,y,t\right)=\frac{1}{\sqrt{y}}\frac{1}{\sqrt{\pi t}}e^{-\frac{x+y}{t}}\cosh\left(2\sqrt{\frac{xy}{t^{2}}}\right),
\]
which, for every $t>0$, has bounded derivatives in $x$ of all orders
when $x,y$ are near 0. However, after imposing the Dirichlet boundary
condition,
\[
p\left(x,y,t\right)=\frac{1}{\sqrt{y}}\frac{1}{\sqrt{\pi t}}e^{-\frac{x+y}{t}}\sinh\left(2\sqrt{\frac{xy}{t^{2}}}\right),
\]
whose derivative in $x$ is unbounded as $x$ or $y$ tends to 0.
\\

The examples above indicate that both the boundary classification
and the boundary condition affect the level of regularity of $p\left(x,y,t\right)$,
in terms of the number of bounded derivatives in $x$ near the boundary.
This is one aspect of $p\left(x,y,t\right)$ that will be further
investigated in later sections.

\subsection*{Notations.}

For $c\in\mathbb{R}$, we denote by $\left[c\right]$ the floor of
$c$, i.e., $\left[c\right]:=\sup\left\{ m\in\mathbb{Z}:m\leq c\right\} $.
For $\alpha,\beta\in\mathbb{R}$, we write $\alpha\vee\beta:=\max\left\{ \alpha,\beta\right\} $
and $\alpha\wedge\beta:=\min\left\{ \alpha,\beta\right\} $. For $k\in\mathbb{N}$
and $f\in C^{k}\left(\left(0,\infty\right)\right)$, we set 
\[
C_{k}^{f}:=\max_{j=0,1,\cdots,k}\,\sup_{x\in\left(0,\infty\right)}\left|f^{\left(j\right)}\left(x\right)\right|.
\]
Throughout the article, all the random variables (in particular, stochastic
processes) are assumed to be $\mathbb{R}-$valued and defined on a
generic filtered probability space $\left(\Omega,\mathcal{F},\left\{ \mathcal{F}_{t}:t\geq0\right\} ,\mathbb{P}\right)$.
\\
For an integrable random variable $X$ on $\Omega$ and a set $A\in\mathcal{F}$,
we write $\mathbb{E}\left[X;A\right]:=\int_{A}Xd\mathbb{P}$. \\
Assume that $\left\{ Z\left(t\right):t\geq0\right\} $ is an adapted
stochastic process on $\left(\Omega,\mathcal{F},\left\{ \mathcal{F}_{t}:t\geq0\right\} ,\mathbb{P}\right)$
with almost surely continuous $\mathbb{R}_{+}-$valued sample paths.
Then, for every $x,y\geq0$, we set 
\[
\zeta_{y}^{Z}\left(x\right):=\inf\left\{ t\geq0:Z\left(t\right)=y|Z\left(0\right)=x\right\} ,
\]
i.e., $\zeta_{y}^{Z}\left(x\right)$ is the hitting time at $y$ conditioning
on the process starting at $x$. 

\section{Model Equation}

As we have mentioned in $\mathsection1.2$ that we will construct
the fundamental solution $p\left(x,y,t\right)$ to (\ref{eq:general IVP equation})
based on solving a model equation and applying transformations and
perturbation techniques. In this section we will focus on solving
the following initial/boundary value problem:
\begin{equation}
\begin{array}{c}
\partial_{t}v_{g}\left(z,t\right)=z\partial_{z}^{2}v_{g}\left(z,t\right)+\nu\partial_{z}v_{g}\left(z,t\right)\text{ for }\left(z,t\right)\in\left(0,\infty\right)^{2},\\
\lim_{t\searrow0}v_{g}\left(z,t\right)=g\left(z\right)\text{ for }z\in\left(0,\infty\right)\text{ and }\lim_{z\searrow0}v_{g}\left(z,t\right)=0\text{ for }t\in\left(0,\infty\right),
\end{array}\label{eq:model equation}
\end{equation}
Let us write $L_{\nu}:=z\partial_{z}^{2}+\nu\partial_{z}$ and denote
by $q_{\nu}\left(z,w,t\right)$ the fundamental solution to (\ref{eq:model equation}).
To get started, we first note that the theorem of Yamada-Watanabe
(see, e.g. $\mathsection10$ of \cite{multi_dim_diff_proc}) applies
to this specific case and implies that there exists an almost surely
unique process $\left\{ Y\left(z,t\right):\left(z,t\right)\in\left[0,\infty\right)^{2}\right\} $
satisfying the stochastic integral equation 
\begin{equation}
Y\left(z,t\right)=z+\int_{0}^{t}\sqrt{2\left|Y\left(z,s\right)\right|}dB\left(s\right)+\nu t\text{ for }\left(z,t\right)\in\left[0,\infty\right)^{2}.\label{eq:SDE satisfied by Y(z,t)}
\end{equation}
We further impose the constraint that
\begin{equation}
Y\left(z,t\right)\equiv0\text{ for every }z\geq0\text{ and }t\geq\zeta_{0}^{Y}\left(z\right).\label{eq:boundary conditon on Y(z,t)}
\end{equation}
Let us apply the boundary classification theory to $L_{\nu}$. When
$\nu\leq0$, $0$ is an exit boundary and (\ref{eq:boundary conditon on Y(z,t)})
can be naturally fulfilled; when $0<\nu<1$, 0 is a regular boundary
and hence it is reasonable to achieve the Dirichlet boundary condition
by imposing (\ref{eq:boundary conditon on Y(z,t)}). However, when
$\nu\geq1$, 0 becomes an \emph{entrance boundary}, which is similar
to a natural boundary in the sense that it is unattainable. In this
case, imposing the Dirichlet condition at 0 is ``at odds'' with
the intrinsic behavior of $Y\left(z,t\right)$ near 0; if one wants
to develop any reasonable regularity theory for $q_{\nu}\left(z,w,t\right)$,
one ought to study $L_{\nu}$ under more suitable boundary conditions
such as the zero-flux condition, which will be briefly explained in
Remark \ref{rem:the other formula for q_nu}. In this work, we will
restrict ourselves to the case when 0 is attainable, that is, when
$\nu<1$. 

\subsection{The solution to the model equation.}

Now we get down to determining $q_{\nu}\left(z,w,t\right)$ under
the Dirichlet boundary condition and the assumption that $\nu<1$.
We start with an ``ansatz'' that $q_{\nu}\left(z,w,t\right)$ takes
the form of 
\begin{equation}
q_{\nu}\left(z,w,t\right)=s_{\nu}\left(z,w,t\right)r_{\nu}\left(\frac{zw}{t^{2}}\right),\label{eq:proposed form of q_nu}
\end{equation}
where $s_{\nu}\left(z,w,t\right):=w^{\nu-1}t^{-\nu}e^{-\frac{z+w}{t}}$
and $r_{\nu}:\left(0,\infty\right)\rightarrow\left(0,\infty\right)$
is a function to be determined. If we plug the form (\ref{eq:proposed form of q_nu})
of $q_{\nu}\left(z,w,t\right)$ in the equation in (\ref{eq:model equation}),
then in order for $\left(z,t\right)\mapsto q_{\nu}\left(z,w,t\right)$
to be a solution to (\ref{eq:model equation}) for every $w>0$, we
must have that 
\[
\frac{zw}{t^{2}}r_{\nu}^{\prime\prime}\left(\frac{zw}{t^{2}}\right)+\nu r_{\nu}^{\prime}\left(\frac{zw}{t^{2}}\right)-r_{\nu}\left(\frac{zw}{t^{2}}\right)=0.
\]
In other words, $r_{\nu}$ must be a solution to the ordinary differential
equation 
\begin{equation}
\xi r_{\nu}^{\prime\prime}\left(\xi\right)+\nu r_{\nu}^{\prime}\left(\xi\right)-r_{\nu}\left(\xi\right)=0\label{eq: ode satisfied by r_nu}
\end{equation}
In fact, (\ref{eq: ode satisfied by r_nu}) is closely related to
the equations satisfied by the Bessel functions (see, e.g., $\mathsection3.7$
of \cite{BesselFunctions}). It is not hard to see that 
\[
r_{\nu}\left(\xi\right):=\xi^{1-\nu}\sum_{n=0}^{\infty}\frac{\xi^{n}}{n!\Gamma\left(n+2-\nu\right)}=\xi^{\frac{1-\nu}{2}}I_{1-\nu}(2\sqrt{\xi})
\]
solves (\ref{eq: ode satisfied by r_nu}), where $I_{1-\nu}$ is the
modified Bessel function. Plugging the expression above back into
(\ref{eq:proposed form of q_nu}), we get that for every $\left(z,w,t\right)\in\left(0,\infty\right)^{3}$,
\begin{equation}
q_{\nu}\left(z,w,t\right)=\frac{z^{1-\nu}}{t^{2-\nu}}e^{-\frac{z+w}{t}}\sum_{n=0}^{\infty}\frac{\left(zw\right)^{n}}{t^{2n}n!\Gamma\left(n+2-\nu\right)}=\frac{z^{\frac{1-\nu}{2}}w^{\frac{\nu-1}{2}}}{t}e^{-\frac{z+w}{t}}I_{1-\nu}\left(2\frac{\sqrt{zw}}{t}\right).\label{eq:def of q_nu}
\end{equation}
When $\nu=0$, (\ref{eq:def of q_nu}) coincides with (\ref{eq:q_0 of WF model equation}),
as we have expected.
\begin{prop}
\label{prop: q_nu is the fundamental solution} Assume that $\nu<1$.
Let $q_{\nu}\left(z,w,t\right)$ be defined as in (\ref{eq:def of q_nu}).
Then, $q_{\nu}\left(z,w,t\right)$ is smooth on $\left(0,\infty\right)^{3}$
with 
\[
\lim_{z\searrow0}q_{\nu}\left(z,w,t\right)=0\text{\,for every }\left(w,t\right)\in\left(0,\infty\right)^{2},
\]
and 
\begin{equation}
w^{1-\nu}q_{\nu}\left(z,w,t\right)=z^{1-\nu}q_{\nu}\left(w,z,t\right)\text{ for every }\left(z,w,t\right)\in\left(0,\infty\right)^{3}.\label{eq:symmetry of q_nu}
\end{equation}

In addition, for every $w>0$, $\left(z,t\right)\mapsto q_{\nu}\left(z,w,t\right)$
solves the Kolmogorov backward equation associated with $L_{\nu}$,
i.e.,
\begin{equation}
\left(\partial_{t}-L_{\nu}\right)q_{\nu}\left(z,w,t\right)=0\text{ for }\left(z,t\right)\in\left(0,\infty\right)^{2};\label{eq: q_nu solves backward eq}
\end{equation}
for every $z>0$, $\left(w,t\right)\mapsto q_{\nu}\left(z,w,t\right)$
solves the corresponding Kolmogorov forward equation, i.e.,
\begin{equation}
\left(\partial_{t}-L_{\nu}^{*}\right)q_{\nu}\left(z,w,t\right)=0\text{ for }\left(w,t\right)\in\left(0,\infty\right)^{2},\label{eq: q_nu solves forward eq}
\end{equation}
where $L_{\nu}^{*}:=w\partial_{w}^{2}+\left(2-\nu\right)\partial_{w}$
is the adjoint of $L_{\nu}$. 

Finally, $q_{\nu}\left(z,w,t\right)$ is the fundamental solution
to (\ref{eq:model equation}), and if 
\begin{equation}
v_{g}\left(z,t\right):=\int_{0}^{\infty}q_{\nu}\left(z,w,t\right)g\left(w\right)dw\text{ for }\left(z,t\right)\in\left(0,\infty\right)^{2},\label{eq: def of v_g(z,t)}
\end{equation}
then $v_{g}\left(z,t\right)$ is a smooth solution to (\ref{eq:model equation}).
\end{prop}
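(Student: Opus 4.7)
The plan is to read the first several assertions directly off the closed-form expression in (\ref{eq:def of q_nu}) and the ansatz that produced it, and then to bootstrap to the fundamental-solution property. I would proceed in the order: smoothness and the Dirichlet behavior at $z=0$, symmetry, backward equation, forward equation, and finally the initial condition.

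Smoothness holds because the power series $\sum_{n\geq 0} (zw)^n/(t^{2n} n!\,\Gamma(n+2-\nu))$ converges locally uniformly on $(0,\infty)^3$ to an entire function of $\xi = zw/t^2$, while the prefactor $z^{1-\nu} t^{\nu-2} e^{-(z+w)/t}$ is smooth there. Since $\nu<1$, this same prefactor drives $q_\nu\to 0$ as $z\searrow 0$ with $w,t$ fixed. The symmetry (\ref{eq:symmetry of q_nu}) is visible in the second expression of (\ref{eq:def of q_nu}): multiplying by $w^{1-\nu}$ yields $z^{(1-\nu)/2} w^{(1-\nu)/2} t^{-1} e^{-(z+w)/t} I_{1-\nu}(2\sqrt{zw}/t)$, which is manifestly symmetric in $(z,w)$. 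The backward equation (\ref{eq: q_nu solves backward eq}) is precisely what the ansatz $q_\nu = s_\nu(z,w,t)\, r_\nu(zw/t^2)$ was designed to satisfy; once one checks from the series that $r_\nu$ solves the ODE (\ref{eq: ode satisfied by r_nu}), the computation preceding that equation becomes a direct verification. For the forward equation (\ref{eq: q_nu solves forward eq}), rather than redoing the calculation, I would leverage the symmetry: writing $q_\nu(z,w,t) = (z/w)^{1-\nu} q_\nu(w,z,t)$ and applying the backward equation in the first slot, a short computation shows that conjugation by the weight $w^{1-\nu}$ intertwines $L_\nu$ (acting on $w$) with $L_\nu^*$, which gives (\ref{eq: q_nu solves forward eq}).

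The main obstacle is the initial condition $\lim_{t \searrow 0} v_g(z,t) = g(z)$ for $z>0$ and $g \in C_b((0,\infty))$, which is what makes $q_\nu$ truly the fundamental solution. Here I would invoke the large-argument asymptotic $I_{1-\nu}(x) \sim e^x/\sqrt{2\pi x}$ to obtain, for $z,w$ in a fixed compact subset of $(0,\infty)$,
\[
q_\nu(z,w,t) = \frac{z^{(1-2\nu)/4} w^{(2\nu-3)/4}}{2\sqrt{\pi t}}\,\exp\!\left(-\frac{(\sqrt{z}-\sqrt{w})^2}{t}\right)\bigl(1+o(1)\bigr)
\]
as $t \searrow 0$. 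Substituting $u = (\sqrt{w}-\sqrt{z})/\sqrt{t}$ reduces the local analysis to a standard Gaussian approximation-of-identity, while a tail bound for $w$ far from $z$ comes from the exponential factor $e^{-(z+w)/t}$ dominating the polynomial growth in the series. Smoothness of $v_g$ on $(0,\infty)^2$ and its Dirichlet behavior at $z=0$ then follow by differentiating (\ref{eq: def of v_g(z,t)}) under the integral sign, using that the Bessel series defines a smooth function and that the $z^{1-\nu}$ factor controls the boundary. A cleaner route, if one prefers to defer the analytic estimates, is probabilistic: the Yamada-Watanabe solution $Y(z,t)$ of (\ref{eq:SDE satisfied by Y(z,t)}) is pathwise continuous with $Y(z,0)=z$, so its sub-distribution on $\{t<\zeta_0^Y(z)\}$ converges weakly to $\delta_z$ as $t\searrow 0$; once one identifies this sub-density with $q_\nu(z,\cdot,t)$ (via uniqueness for the Kolmogorov forward equation), the initial condition follows without any Bessel asymptotics.
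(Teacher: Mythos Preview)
Your treatment of smoothness, the Dirichlet limit, symmetry, and the backward equation matches the paper, which also dismisses these as direct computations from (\ref{eq:def of q_nu}). Your derivation of the forward equation via the symmetry and the conjugation $w^{1-\nu}L_\nu w^{-(1-\nu)}=L_\nu^*$ is a nice shortcut; the paper simply lists it among the direct checks.

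Where you diverge is in the initial condition and in the smoothness of $v_g$. The paper avoids Bessel asymptotics entirely: it bounds the series by $\sum_n \xi^n/(n!)^2 \leq e^{2\sqrt{\xi}}$ to get the global pointwise estimate $q_\nu(z,w,t)\leq z^{1-\nu}t^{\nu-2}e^{-(\sqrt z-\sqrt w)^2/t}$, which immediately controls the tail in $w$; then it computes the total mass exactly, $\int_0^\infty q_\nu(z,w,t)\,dw=\gamma(1-\nu,z/t)/\Gamma(1-\nu)$, an incomplete gamma function tending to $1$ as $t\searrow 0$. These two facts give the approximate-identity property with no asymptotic expansion and no change of variable. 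For smoothness of $v_g$ the paper does not differentiate under the integral but instead shows $v_g$ is a distributional solution and invokes hypoellipticity of $\partial_t-L_\nu$. Your route through the $I_{1-\nu}$ asymptotic and the substitution $u=(\sqrt w-\sqrt z)/\sqrt t$ would also work, but note a slip: the series is not of polynomial growth in $w$, it grows like $e^{2\sqrt{zw}/t}$, so your tail argument must use $e^{-(z+w)/t}e^{2\sqrt{zw}/t}=e^{-(\sqrt z-\sqrt w)^2/t}$ rather than ``exponential beats polynomial.''

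Your probabilistic alternative is circular as stated. Identifying the sub-density of $Y(z,t)$ on $\{t<\zeta_0^Y(z)\}$ with $q_\nu(z,\cdot,t)$ is exactly what the paper establishes \emph{after} this proposition (Proposition~\ref{prop:uniqueness of v_g  and CK eq for q_nu}), and that argument uses the fact that $v_g$ already solves (\ref{eq:model equation}) smoothly with the correct initial data. Appealing to ``uniqueness for the Kolmogorov forward equation'' does not help here, since no such uniqueness result is available a priori for this degenerate operator with these boundary conditions.
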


\begin{proof}
The smoothness of $q_{\nu}\left(z,w,t\right)$ and the limit of $q_{\nu}\left(z,w,t\right)$
in $z$ as $z\searrow0$, as well as (\ref{eq:symmetry of q_nu}),
(\ref{eq: q_nu solves backward eq}) and (\ref{eq: q_nu solves forward eq}),
all follow from (\ref{eq:def of q_nu}) by direct computations. So
we only need to focus on the last statement. First we observe that
for every $\xi\geq0$,
\[
\sum_{n=0}^{\infty}\frac{\xi^{n}}{n!\Gamma\left(n+2-\nu\right)}\leq\sum_{n=0}^{\infty}\frac{\xi^{n}}{(n!)^{2}}\leq\sum_{n=0}^{\infty}\frac{\left(2\sqrt{\xi}\right)^{2n}}{\left(2n\right)!}\leq e^{2\sqrt{\xi}}
\]
and hence 
\begin{equation}
q_{\nu}\left(z,w,t\right)\leq\frac{z^{1-\nu}}{t^{2-\nu}}e^{-\frac{z+w}{t}}e^{\frac{2\sqrt{zw}}{t}}=\frac{z^{1-\nu}}{t^{2-\nu}}e^{-\frac{\left(\sqrt{z}-\sqrt{w}\right)^{2}}{t}}\text{ for every }\left(z,w,t\right)\in\left(0,\infty\right)^{3}.\label{eq: estimate of q_nu}
\end{equation}
From (\ref{eq: estimate of q_nu}), we immediately derive that for
every $\delta,M>0$, as $t\searrow0$, $\int_{(0,\infty)\backslash\left(z-\delta,z+\delta\right)}q_{\nu}\left(z,w,t\right)dw$
tends to 0 uniformly fast in $z\in(0,M)$. Besides, (\ref{eq: estimate of q_nu})
also guarantees that, for every $\left(z,t\right)\in\left(0,\infty\right)^{2}$,
\begin{equation}
\begin{split}\int_{0}^{\infty}q_{\nu}\left(z,w,t\right)dw & =\frac{z^{1-\nu}}{t^{2-\nu}}e^{-\frac{z}{t}}\sum_{n=0}^{\infty}\frac{z^{n}}{t^{2n}n!\Gamma\left(n+2-\nu\right)}\int_{0}^{\infty}e^{-\frac{w}{t}}w^{n}dw\\
 & =\frac{z^{1-\nu}}{t^{1-\nu}}e^{-\frac{z}{t}}\sum_{n=0}^{\infty}\frac{z^{n}}{t^{n}\Gamma\left(n+2-\nu\right)}=\frac{\gamma(1-\nu,\frac{z}{t})}{\Gamma(1-\nu)},
\end{split}
\label{eq:total mass of q_nu}
\end{equation}
where $\gamma\left(1-\nu,\xi\right):=\int_{0}^{\xi}u^{-\nu}e^{-u}du$,
$\xi>0$, is the incomplete gamma function with 
\[
\lim_{\xi\nearrow\infty}\gamma\left(1-\nu,\xi\right)=\Gamma\left(1-\nu\right).
\]
Thus, as $t\searrow0$, $\int_{0}^{\infty}q_{\nu}\left(z,w,t\right)dw$
tends to 1 uniformly fast for $z\in[\delta,\infty)$ for every $\delta>0$. 

Let $v_{g}\left(z,t\right)$ be defined as in (\ref{eq: def of v_g(z,t)}).
The derivations above leads to $\lim_{z\searrow0}v_{g}\left(z,t\right)=0$
and $\lim_{t\searrow0}v_{g}\left(z,t\right)=g\left(z\right)$ uniformly
in $z$ in any compact set in $\left(0,\infty\right)$. The only thing
left to do is to show that $v_{g}\left(z,t\right)$ is a smooth solution
to the diffusion equation in (\ref{eq:model equation}). Since the
operator $\partial_{t}-L_{\nu}$ is hypoelliptic (see, e.g., $\mathsection7.3$-$\mathsection7.4$
of \cite{PDEStroock}), it is sufficient to show that $v_{g}\left(z,t\right)$
solves the equation in the sense of tempered distributions. To this
end, we take $\varphi$ to be a Schwartz function on $\left(0,\infty\right)$,
and consider 
\[
\left\langle \varphi,v_{g}\left(\cdot,t\right)\right\rangle :=\int_{0}^{\infty}v_{g}\left(z,t\right)\varphi\left(z\right)dz\text{ for }t>0.
\]
Using either of the two expressions in (\ref{eq:def of q_nu}), we
can check that 
\[
\partial_{t}q_{\nu}\left(z,w,t\right)=q_{\nu}\left(z,w,t\right)\frac{z+w}{t^{2}}+\left(\nu-2\right)\frac{1}{t}q_{\nu}\left(z,w,t\right)-\frac{2w}{t^{2}}q_{\nu-1}\left(z,w,t\right).
\]
Then, by (\ref{eq: q_nu solves backward eq}) and (\ref{eq: estimate of q_nu}),
\[
\begin{split}\frac{d}{dt}\left\langle \varphi,v_{g}\left(\cdot,t\right)\right\rangle  & =\int_{0}^{\infty}\left(\int_{0}^{\infty}\partial_{t}q_{\nu}\left(z,w,t\right)\varphi\left(z\right)dz\right)g\left(w\right)dw\\
 & =\int_{0}^{\infty}\left(\int_{0}^{\infty}q_{\nu}\left(z,w,t\right)\left(L_{\nu}^{*}\varphi\right)\left(z\right)dz\right)g\left(w\right)dw\\
 & =\left\langle L_{\nu}^{*}\varphi,v_{g}\left(\cdot,t\right)\right\rangle 
\end{split}
\]
Therefore, $v_{g}\left(z,t\right)$ is a strong solution to the equation
in (\ref{eq:model equation}) and $v_{g}\left(z,t\right)$ is smooth
on $\left(0,\infty\right)^{2}$.
\end{proof}
Next, we will use a martingale argument to prove the uniqueness of
$v_{g}\left(z,t\right)$, where the proof also provides a probabilistic
interpretation of $q_{\nu}\left(z,w,t\right)$.
\begin{prop}
\label{prop:uniqueness of v_g  and CK eq for q_nu}Assume that $\nu<1$.
Given $g\in C_{b}\left(\left(0,\infty\right)\right)$, let $v_{g}\left(z,t\right)$
be defined as in (\ref{eq: def of v_g(z,t)}). Then, 
\begin{equation}
v_{g}\left(z,t\right)=\mathbb{E}\left[g\left(Y\left(z,t\right)\right);t<\zeta_{0}^{Y}\left(z\right)\right]\text{ for every }\left(z,t\right)\in\left(0,\infty\right)^{2}.\label{eq:prob interpretation of v_g}
\end{equation}
and hence $v_{g}(z,t)$ is the unique solution in $C^{2,1}\left(\left(0,\infty\right)^{2}\right)$
to (\ref{eq:model equation}). 

Furthermore, for every Borel set $\Gamma\subseteq[0,\infty)$,
\begin{equation}
\int_{\Gamma}q_{\nu}\left(z,w,t\right)dw=\mathbb{P}\left(Y\left(z,t\right)\in\Gamma,t<\zeta_{0}^{Y}\left(z\right)\right),\label{eq:prob interpretation of q_nu}
\end{equation}
and $q_{\nu}\left(z,w,t\right)$ satisfies the Chapman-Kolmogorov
equation, i.e., for every $z,w>0$ and $t,s>0$,
\begin{equation}
q_{\nu}\left(z,w,t+s\right)=\int_{0}^{\infty}q_{\nu}\left(z,\xi,t\right)q_{\nu}\left(\xi,w,s\right)d\xi.\label{eq:CK equation for q_nu}
\end{equation}
\end{prop}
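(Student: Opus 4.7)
The plan is to combine It\^o's formula with the backward Kolmogorov equation solved by $v_{g}$ to produce a martingale whose terminal expectation reproduces \eqref{eq:prob interpretation of v_g}. Fix $(z,t)\in(0,\infty)^{2}$, and for $s\in[0,t\wedge\zeta_{0}^{Y}(z))$ consider the process
\[
M(s):=v_{g}\bigl(Y(z,s),\,t-s\bigr).
\]
Since $v_{g}\in C^{2,1}((0,\infty)^{2})$ by Proposition \ref{prop: q_nu is the fundamental solution}, It\^o's formula applied up to any localizing stopping time gives
\[
dM(s)=\bigl(-\partial_{t}v_{g}+L_{\nu}v_{g}\bigr)(Y(z,s),t-s)\,ds+\partial_{z}v_{g}(Y(z,s),t-s)\sqrt{2\,Y(z,s)}\,dB(s),
\]
and the drift vanishes because $v_{g}$ solves the diffusion equation in \eqref{eq:model equation}. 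Hence $M$ is a continuous local martingale on $[0,t\wedge\zeta_{0}^{Y}(z))$.

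To upgrade $M$ to a true martingale, I would introduce the localizing sequence $\tau_{n}:=\inf\{s\geq 0:Y(z,s)\notin(1/n,n)\}$ and apply optional stopping at $\sigma_{n}:=\tau_{n}\wedge t$. The contraction bound $|v_{g}(z,t)|\leq\|g\|_{\infty}\int_{0}^{\infty}q_{\nu}(z,w,t)\,dw\leq\|g\|_{\infty}$, coming from \eqref{eq: def of v_g(z,t)} together with \eqref{eq:total mass of q_nu}, keeps $|M(\sigma_{n})|$ uniformly bounded by $\|g\|_{\infty}$, so bounded convergence lets me take $n\to\infty$ in $v_{g}(z,t)=\mathbb{E}[M(\sigma_{n})]$. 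On the event $\{t<\zeta_{0}^{Y}(z)\}$, $\sigma_{n}\to t$ and continuity of $s\mapsto Y(z,s)$ together with the initial condition in \eqref{eq:model equation} gives $M(\sigma_{n})\to v_{g}(Y(z,t),0)=g(Y(z,t))$; on $\{t\geq\zeta_{0}^{Y}(z)\}$, $\sigma_{n}\to\zeta_{0}^{Y}(z)$, $Y(z,\sigma_{n})\searrow 0$, and the Dirichlet limit $\lim_{z\searrow 0}v_{g}(z,\cdot)=0$ (Proposition \ref{prop: q_nu is the fundamental solution}) forces $M(\sigma_{n})\to 0$. Combining the two cases produces \eqref{eq:prob interpretation of v_g}. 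Uniqueness in $C^{2,1}((0,\infty)^{2})$ then comes for free: the same It\^o/optional-stopping argument applies verbatim to any $C^{2,1}$ solution of \eqref{eq:model equation} and forces it to agree with the right-hand side of \eqref{eq:prob interpretation of v_g}.

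For \eqref{eq:prob interpretation of q_nu}, specialize \eqref{eq:prob interpretation of v_g} to bounded continuous $g$ approximating $\mathbf{1}_{\Gamma}$, so that both sides define Borel measures in $\Gamma$ that coincide on a determining class; a monotone class argument extends the identity to all Borel $\Gamma\subseteq[0,\infty)$. The Chapman-Kolmogorov relation \eqref{eq:CK equation for q_nu} then follows from the strong Markov property of $Y$ applied at the deterministic time $t$, together with the absorption constraint \eqref{eq:boundary conditon on Y(z,t)}: conditioning on $\mathcal{F}_{t}$ inside $\mathbb{P}(Y(z,t+s)\in\Gamma,\,t+s<\zeta_{0}^{Y}(z))$ and using \eqref{eq:prob interpretation of q_nu} in the two stages rewrites this probability as $\int_{0}^{\infty}q_{\nu}(z,\xi,t)\bigl(\int_{\Gamma}q_{\nu}(\xi,w,s)\,dw\bigr)d\xi$; comparing with the direct expression $\int_{\Gamma}q_{\nu}(z,w,t+s)\,dw$ and using that $\Gamma$ is arbitrary yields equality of the densities.

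The main delicacy is the behavior of $M$ as $Y$ approaches the boundary: the localization by $\tau_{n}$ is essential because neither $\partial_{z}v_{g}$ nor $\sqrt{2Y(z,s)}\,\partial_{z}v_{g}$ is a priori square-integrable up to $\zeta_{0}^{Y}(z)$, and the conclusion at the hitting time relies crucially on the Dirichlet limit of $v_{g}$ at $0$ from Proposition \ref{prop: q_nu is the fundamental solution}. This is precisely where the standing assumption $\nu<1$ enters, since for $\nu\geq 1$ the boundary is an entrance point and the same representation would have to be replaced by one adapted to the zero-flux condition.
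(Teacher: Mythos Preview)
Your proof is correct and follows essentially the same martingale argument as the paper: apply It\^o's formula to $s\mapsto v_{g}(Y(z,s),t-s)$, use the backward equation to kill the drift, stop at the hitting time of $0$ and equate expectations at the two endpoints, then invoke the (strong) Markov property of $Y$ for the Chapman--Kolmogorov identity. The only difference is that you spell out the localizing sequence $\tau_{n}$ and the bounded-convergence passage to the limit, whereas the paper simply asserts that the stopped process is a martingale and evaluates it at $s=0$ and $s=t$; your version is slightly more careful but not a different route.
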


\begin{proof}
Since $v_{g}\left(z,t\right)$ is smooth on $\left(0,\infty\right)^{2}$
and satisfies the equation in (\ref{eq:model equation}), one can
use Itô's formula to check that for every $\left(z,t\right)\in\left(0,\infty\right)^{2}$,
$\left\{ v_{g}\left(Y\left(z,s\right),t-s\right):s\in\left[0,t\right]\right\} $
is a martingale. Further, by Doob's stopping time theorem, 
\[
\left\{ v_{g}\left(Y\left(z,s\wedge\zeta_{0}^{Y}\left(z\right)\right),t-s\wedge\zeta_{0}^{Y}\left(z\right)\right):s\in\left[0,t\right]\right\} 
\]
is also a martingale. Equating the expectations of the martingale
at $s=0$ and $s=t$ leads to 
\[
v_{g}\left(z,t\right)=\mathbb{E}\left[v_{g}\left(Y\left(z,t\wedge\zeta_{0}^{Y}\left(z\right)\right),t-t\wedge\zeta_{0}^{Y}\left(z\right)\right)\right]=\mathbb{E}\left[g\left(Y\left(z,t\right)\right);t<\zeta_{0}^{Y}\left(z\right)\right],
\]
which is exactly (\ref{eq:prob interpretation of v_g}), and it implies
that $v_{g}\left(z,t\right)$ is the unique solution in $C^{2,1}\left(\left(0,\infty\right)^{2}\right)$
to (\ref{eq:model equation}) since $Y\left(z,t\right)$ is almost
surely unique. (\ref{eq:prob interpretation of q_nu}) follows from
(\ref{eq:prob interpretation of v_g}) because $g\in C_{b}\left(\left(0,\infty\right)\right)$
is arbitrary. 

The uniqueness of the process $\left\{ Y\left(z,t\right):\left(z,t\right)\in\left[0,\infty\right)^{2}\right\} $
also guarantees that the process has the strong Markov property. Therefore,
given any $\varphi\in C_{b}\left(\left(0,\infty\right)\right)$, for
every $z,w>0$ and $t,s>0$,
\[
\begin{split}\int_{0}^{\infty}\varphi\left(w\right)q_{\nu}\left(z,w,t+s\right)dw & =\mathbb{E}\left[\varphi\left(Y\left(z,t+s\right)\right);t+s<\zeta_{0}^{Y}\left(z\right)\right]\\
 & =\mathbb{E}\left[\int_{0}^{\infty}\varphi\left(w\right)q_{\nu}\left(Y\left(z,t\right),w,s\right)dw;t<\zeta_{0}^{Y}\left(z\right)\right]\\
 & =\int_{0}^{\infty}\int_{0}^{\infty}\varphi\left(w\right)q_{\nu}\left(u,w,s\right)q_{\nu}\left(z,u,t\right)dudw,
\end{split}
\]
which leads to (\ref{eq:CK equation for q_nu}).
\end{proof}
\begin{rem}
\label{rem:the other formula for q_nu} While determining the formula
of $q_{\nu}\left(z,w,t\right)$ earlier, we obtained the ordinary
differential equation (\ref{eq: ode satisfied by r_nu}) which led
to $r_{\nu}\left(\xi\right)$. However, it is easy to see that
\[
r_{\nu}^{*}\left(\xi\right):=\sum_{n=0}^{\infty}\frac{\xi^{n}}{n!\Gamma\left(\nu+n\right)}=\xi^{\frac{1-\nu}{2}}I_{\nu-1}\left(2\sqrt{\xi}\right)\text{ for }\xi>0
\]
is also a solution to (\ref{eq: ode satisfied by r_nu}), and hence
if we define, for every $\left(z,w,t\right)\in\left(0,\infty\right)^{3}$,
\[
q_{\nu}^{*}\left(z,w,t\right):=\frac{w^{\nu-1}}{t^{\nu}}e^{-\frac{z+w}{t}}\sum_{n=0}^{\infty}\frac{\left(zw\right)^{n}}{t^{2n}n!\Gamma\left(\nu+n\right)}=\frac{w^{\frac{\nu-1}{2}}z^{-\frac{1-\nu}{2}}}{t}e^{-\frac{z+w}{t}}I_{\nu-1}\left(\frac{2\sqrt{zw}}{t}\right),
\]
then $\left(z,t\right)\mapsto q_{\nu}^{*}\left(z,w,t\right)$ also
solves the equation in (\ref{eq:model equation}) but does not satisfy
the Dirichlet boundary condition. In fact, (\ref{eq:model equation})
is exactly the model equation treated in \cite{wfeq_Epstein_Mazzeo}
except that there the constant $\nu$ is assumed to be non-negative
and the boundary condition is the zero flux condition:
\[
\lim_{z\searrow0}z^{\nu}\partial_{z}v\left(z,t\right)=0\text{ for every }t>0,
\]
for which the fundamental solution is found to be $q_{\nu}^{*}\left(z,w,t\right)$.
Obviously $q_{\nu}\left(z,w,t\right)$ and $q_{\nu}^{*}\left(z,w,t\right)$
have distinct regularity properties near 0. For example, for every
$\left(w,t\right)\in\left(0,\infty\right)^{2}$, $z\mapsto q_{\nu}^{*}\left(z,w,t\right)$
is analytic near 0 for any value of $\nu$, while, as it will be made
explicit in the next subsection, $z\mapsto q_{\nu}\left(z,w,t\right)$
has only finitely many orders of bounded derivatives near 0 for non-integer
$\nu$. 
\end{rem}

\subsection{Derivative of solutions to the model equation.}

Since we have obtained the explicit formulas of $q_{\nu}\left(z,w,t\right)$
and $v_{g}\left(z,t\right)$, we can take a closer look at their derivatives
in $z$. Let us first slightly extend the definition in (\ref{eq:def of q_nu}).
Although we will only consider the case when $\nu<1$ in this article,
when we treat the derivatives of $q_{\nu}\left(z,w,t\right)$, we
will encounter functions of the same type but corresponding to larger
values of $\nu$. To be convenient, for every constant $\sigma\in\mathbb{R}\backslash\left\{ 2,3,\cdots\right\} $,
we continue defining $q_{\sigma}\left(z,w,t\right)$ as 
\begin{equation}
q_{\sigma}\left(z,w,t\right):=\frac{z^{1-\sigma}}{t^{2-\sigma}}e^{-\frac{z+w}{t}}\sum_{n=0}^{\infty}\frac{\left(zw\right)^{n}}{t^{2n}n!\Gamma\left(n+2-\sigma\right)}=\frac{z^{\frac{1-\sigma}{2}}w^{\frac{\sigma-1}{2}}}{t}e^{-\frac{z+w}{t}}I_{1-\sigma}\left(2\frac{\sqrt{zw}}{t}\right).\label{eq:def of q_sigma}
\end{equation}
Note that even if $\sigma\geq1$, neither of the two expressions in
(\ref{eq:def of q_sigma}) causes trouble, because $\Gamma\left(2-\sigma\right)$
and $I_{1-\sigma}$ are well defined for all $\sigma\in\mathbb{R}\backslash\left\{ 2,3,\cdots\right\} $.
We have the following technical lemma on $q_{\sigma}\left(z,w,t\right)$. 
\begin{lem}
\label{lem:technical lemma}Let $q_{\sigma}\left(z,w,t\right)$ be
defined as in (\ref{eq:def of q_sigma}) for every $\sigma\in\mathbb{R}\backslash\left\{ 2,3,\cdots\right\} $.\\
\\
(i) If $\sigma\in[1,\infty)\backslash\left\{ 2,3,\cdots\right\} $,
then for every $\left(z,w,t\right)\in\left(0,\infty\right)^{3}$,
\begin{equation}
\left|q_{\sigma}\left(z,w,t\right)\right|\leq\frac{z^{1-\sigma}}{t^{2-\sigma}}\left(\frac{zw}{t^{2}}\vee1\right)^{\left[\sigma-2\right]+1}\frac{\left(\left[\sigma-2\right]+2\right)!}{\Gamma\left(4-\sigma+\left[\sigma-2\right]\right)}e^{-\frac{z+w}{t}}+\frac{z^{3-\sigma+\left[\sigma-2\right]}}{t^{4-\sigma+\left[\sigma-2\right]}}e^{-\frac{\left(\sqrt{z}-\sqrt{w}\right)^{2}}{t}}.\label{eq:estimate of q_nu for nu>=00003D1}
\end{equation}
(ii) For every $\nu\in\left(-\infty,1\right)\backslash\mathbb{Z}$,
every $k\in\mathbb{N}$ and every $\left(z,w,t\right)\in\left(0,\infty\right)^{3}$,
\begin{equation}
\partial_{z}^{k}q_{\nu}\left(z,w,t\right)=\frac{1}{t^{k}}\sum_{j=0}^{k}\binom{k}{j}\left(-1\right)^{k-j}q_{\nu+j}\left(z,w,t\right)\label{eq:(dz)^k of q_nu expressed as sum of q_nu+j}
\end{equation}
and 

\begin{equation}
\partial_{z}^{k}q_{\nu}\left(z,w,t\right)=\left(-1\right)^{k}\partial_{w}^{k}q_{\nu+k}\left(z,w,t\right).\label{eq:(dz)^k q_nu =00003D (dw)^k q_(nu+k)}
\end{equation}
(iii) For every $N\in\mathbb{N}$, every $k\in\mathbb{N}$ and every
$\left(z,w,t\right)\in\left(0,\infty\right)^{3}$,
\begin{equation}
\begin{split}\partial_{z}^{k}q_{-N}\left(z,w,t\right) & =\frac{1}{t^{k}}\sum_{j=0}^{\left(N+1\right)\wedge k}\binom{k}{j}\left(-1\right)^{k-j}q_{-N+j}\left(z,w,t\right)\\
 & \qquad+\frac{1}{t^{k}}\sum_{j=\left(N+2\right)\wedge k}^{k}\binom{k}{j}\left(-1\right)^{k-j}q_{2+N-j}\left(w,z,t\right)
\end{split}
\label{eq:(dz)^k of q_(-N)}
\end{equation}
and 
\begin{equation}
\partial_{z}^{k}q_{-N}\left(z,w,t\right)=\begin{cases}
\left(-1\right)^{k}\partial_{w}^{k}q_{-N+k}\left(z,w,t\right), & \text{if }k\in\left\{ 0,1,2,\cdots,N+1\right\} ,\\
\left(-1\right)^{k}\partial_{w}^{k}q_{2+N-k}\left(w,z,t\right), & \text{if }k\geq N+2.
\end{cases}\label{eq:(dz)^k q_(-N) =00003D (dw)^k q_(N+2-k)}
\end{equation}
\end{lem}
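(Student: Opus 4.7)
The core of the argument is the pair of first-order recursions
\[
\partial_{z}q_{\nu}(z,w,t)=\tfrac{1}{t}\bigl(q_{\nu+1}(z,w,t)-q_{\nu}(z,w,t)\bigr),\qquad\partial_{w}q_{\nu}(z,w,t)=\tfrac{1}{t}\bigl(q_{\nu-1}(z,w,t)-q_{\nu}(z,w,t)\bigr),
\]
which I would derive first. Each follows from differentiating the series expression in \eqref{eq:def of q_sigma} term by term: for $\partial_{z}$, the functional equation identity $(n+1-\nu)/\Gamma(n+2-\nu)=1/\Gamma(n+1-\nu)$ shifts the $\nu$-index by one and reproduces $q_{\nu+1}$; for $\partial_{w}$, reindexing $n\mapsto n-1$ effects the analogous shift down to $q_{\nu-1}$. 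Once these are available, part (ii) is by iteration: writing the first recursion as $\partial_{z}=\frac{1}{t}(S-\mathrm{id})$ with $S$ the shift $\nu\mapsto\nu+1$, the binomial theorem immediately yields \eqref{eq:(dz)^k of q_nu expressed as sum of q_nu+j}, and combining the two recursions into $\partial_{z}q_{\nu}=-\partial_{w}q_{\nu+1}$ and iterating gives \eqref{eq:(dz)^k q_nu =00003D (dw)^k q_(nu+k)}.

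Part (iii) runs along the same lines, but now iterating $\partial_{z}$ from $\nu=-N$ eventually hits the forbidden indices $\sigma\in\{2,3,\ldots\}$ at step $j=N+2$, where the series definition in \eqref{eq:def of q_sigma} breaks down. The remedy is to use the Bessel-function form of $q_{\sigma}$, which is well-defined for all real $\sigma$; combined with the parity identity $I_{-m}(x)=I_{m}(x)$ for $m\in\mathbb{Z}$, a short computation yields the reflection
\[
q_{\sigma}(z,w,t)=q_{2-\sigma}(w,z,t)\quad\text{for }\sigma\in\{2,3,\ldots\}.
\]
Under this convention the first-order recursion continues to hold across integer $\sigma$ (verifiable either from the Bessel recurrence $\xi I_{\alpha}'(\xi)+\alpha I_{\alpha}(\xi)=\xi I_{\alpha-1}(\xi)$ or by taking limits from non-integer $\nu$), so the same binomial expansion as in (ii) applies; replacing each $q_{-N+j}$ with $j\geq N+2$ by $q_{2+N-j}(w,z,t)$ produces \eqref{eq:(dz)^k of q_(-N)}. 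Splitting \eqref{eq:(dz)^k q_nu =00003D (dw)^k q_(nu+k)} into the two regimes $k\leq N+1$ (no reflection needed) and $k\geq N+2$ (apply the reflection to $q_{-N+k}$) then yields \eqref{eq:(dz)^k q_(-N) =00003D (dw)^k q_(N+2-k)}.

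For part (i), I would split the series in \eqref{eq:def of q_sigma} at $n=M:=[\sigma-2]+1$. On the finite block $0\leq n\leq M$, each $(zw/t^{2})^{n}$ is dominated by $(zw/t^{2}\vee1)^{M}$, and the remaining combinatorial sum is controlled by the explicit constant $(M+1)!/\Gamma(M+3-\sigma)$, producing the first term of the bound. For the tail $n\geq M+1$, the reindexing $n=m+M+1$ extracts a factor $\xi^{M+1}$ with $\xi=zw/t^{2}$; since $\tau:=M+3-\sigma\in(1,2]$ one has $\Gamma(m+\tau)\geq m!\,\Gamma(\tau)$ and $(m+M+1)!\geq(M+1)!\,m!$, reducing the tail to one dominated by $\sum_{m}\xi^{m}/(m!)^{2}\leq e^{2\sqrt{\xi}}$. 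Combining with the prefactor $z^{1-\sigma}t^{\sigma-2}e^{-(z+w)/t}$ and the identity $e^{-(z+w)/t+2\sqrt{zw}/t}=e^{-(\sqrt{z}-\sqrt{w})^{2}/t}$ yields the second term of \eqref{eq:estimate of q_nu for nu>=00003D1} after absorbing the residual constants and powers.

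The main obstacle is the bookkeeping in part (i): getting the precise exponents $3-\sigma+[\sigma-2]$ and $4-\sigma+[\sigma-2]$ to line up cleanly requires carefully tracking how the prefactor interacts with the index shift and ensuring that residual powers of $w/t$ are absorbed into the Gaussian-type factor $e^{-(\sqrt{z}-\sqrt{w})^{2}/t}$ rather than escaping as growth. Parts (ii) and (iii) are by comparison routine once the two first-order recursions and the Bessel reflection identity are in hand; the conceptual content there is simply the recognition that the reflection $q_{\sigma}\leftrightarrow q_{2-\sigma}(w,z,\cdot)$ at integer $\sigma\geq2$ is exactly what reconciles the formal binomial expansion with the Bessel-function extension of $q_{\sigma}$ through the forbidden values.
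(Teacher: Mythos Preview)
Your proposal is correct and follows essentially the same approach as the paper: the same series split for part (i), the same first-order recursions $\partial_z q_\sigma = t^{-1}(q_{\sigma+1}-q_\sigma) = -\partial_w q_{\sigma+1}$ iterated via the binomial identity for part (ii), and the same mechanism for part (iii). The only cosmetic difference is that where you invoke the Bessel reflection $q_\sigma(z,w,t)=q_{2-\sigma}(w,z,t)$ for integer $\sigma\geq2$ to extend the recursion uniformly, the paper instead verifies the single transition identity $\partial_z q_1 = t^{-1}(q_0(w,z,t)-q_1)$ by hand and then carries out an explicit induction on $k$; your formulation is slightly cleaner but the content is identical.
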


\begin{proof}
We review that for $\alpha>0$, $\Gamma\left(\alpha\right)$ is defined
by the integral expression as 
\[
\Gamma\left(\alpha\right):=\int_{0}^{\infty}t^{\alpha-1}e^{-t}dt,
\]
and for $\alpha<0$, $\alpha\notin\mathbb{Z}$, $\Gamma\left(\alpha\right)$
is determined by the relation that
\[
\Gamma\left(\alpha\right):=\frac{\Gamma\left(\alpha+\left[-\alpha\right]+1\right)}{\alpha\left(\alpha+1\right)\left(\alpha+2\right)\cdots\left(\alpha+\left[-\alpha\right]\right)}.
\]
For $\sigma\in[1,\infty)\backslash\left\{ 2,3,\cdots\right\} $, to
prove (\ref{eq:estimate of q_nu for nu>=00003D1}), it suffices to
rewrite the series in (\ref{eq:def of q_sigma}) as 
\[
\begin{split} & \sum_{n=0}^{1+\left[\sigma-2\right]}\frac{\left(zw\right)^{n}\left(n+2-\sigma\right)\left(n+3-\sigma\right)\cdots\left(\left[\sigma-2\right]+3-\sigma\right)}{t^{2n}n!\Gamma\left(4-\sigma+\left[\sigma-2\right]\right)}\\
 & \hspace{1cm}+\left(\frac{zw}{t^{2}}\right)^{\left[\sigma-2\right]+2}\sum_{m=0}^{\infty}\frac{\left(zw\right)^{m}}{t^{2m}\left(m+2+\left[\sigma-2\right]\right)!\Gamma\left(m+4+\left[\sigma-2\right]-\sigma\right)}\\
\leq & \frac{\left(\left[\sigma-2\right]+1\right)!}{\Gamma\left(4-\sigma+\left[\sigma-2\right]\right)}\sum_{n=0}^{1+\left[\sigma-2\right]}\frac{\left(zw\right)^{n}}{t^{2n}n!}+\left(\frac{zw}{t^{2}}\right)^{\left[\sigma-2\right]+2}\sum_{m=0}^{\infty}\frac{\left(zw\right)^{m}}{t^{2m}\left(m!\right)^{2}}\\
\leq & \frac{\left(\left[\sigma-2\right]+2\right)!}{\Gamma\left(4-\sigma+\left[\sigma-2\right]\right)}\left(\frac{zw}{t^{2}}\vee1\right)^{1+\left[\sigma-2\right]}+\left(\frac{zw}{t^{2}}\right)^{\left[\sigma-2\right]+2}e^{\frac{2\sqrt{zw}}{t}}.
\end{split}
\]

Now let $\sigma\in\mathbb{R}\backslash\left\{ 2,3,\cdots\right\} $
and $k\in\mathbb{N}$. Using either of the two expressions in (\ref{eq:def of q_sigma}),
we can check by direct computations that 
\[
\partial_{z}q_{\sigma}\left(z,w,t\right)=-\frac{1}{t}q_{\sigma}\left(z,w,t\right)+\frac{1}{t}q_{\sigma+1}\left(z,w,t\right)=-\partial_{w}q_{\sigma+1}\left(z,w,t\right)\text{ for every }\left(z,w,t\right)\in\left(0,\infty\right)^{3}.
\]
We will use induction to prove (\ref{eq:(dz)^k of q_nu expressed as sum of q_nu+j})
and (\ref{eq:(dz)^k q_nu =00003D (dw)^k q_(nu+k)}). There is nothing
to be done for $k=0$. Assume that the two formulas are correct for
some $k\in\mathbb{N}$. Then, 
\[
\begin{split}\partial_{z}^{k+1}q_{\nu}\left(z,w,t\right) & =\frac{1}{t^{k}}\sum_{j=0}^{k}\binom{k}{j}\left(-1\right)^{k-j}\partial_{z}q_{\nu+j}\left(z,w,t\right)\\
 & =\frac{1}{t^{k+1}}\sum_{j=0}^{k}\binom{k}{j}\left(-1\right)^{k+1-j}\left(q_{\nu+j}\left(z,w,t\right)-q_{\nu+j+1}\left(z,w,t\right)\right)\\
 & =\frac{1}{t^{k+1}}\sum_{j=0}^{k+1}\binom{k+1}{j}\left(-1\right)^{k+1-j}q_{\nu+j}\left(z,w,t\right)
\end{split}
\]
and 
\[
\partial_{z}^{k+1}q_{\nu}\left(z,w,t\right)=\left(-1\right)^{k}\partial_{w}^{k}\partial_{z}q_{\nu+k}\left(z,w,t\right)=\left(-1\right)^{k+1}\partial_{w}^{k+1}q_{\nu+k+1}\left(z,w,t\right).
\]
Hence, (\ref{eq:(dz)^k of q_nu expressed as sum of q_nu+j}) and (\ref{eq:(dz)^k q_nu =00003D (dw)^k q_(nu+k)})
also hold for $k+1$.

Now we move on to (\ref{eq:(dz)^k of q_(-N)}). If $k\leq N+1$, then
$q_{-N+k}\left(z,w,t\right)$ is still defined by (\ref{eq:def of q_sigma}).
It is clear that the arguments above that were used to prove (\ref{eq:(dz)^k of q_nu expressed as sum of q_nu+j})
and (\ref{eq:(dz)^k q_nu =00003D (dw)^k q_(nu+k)}) still apply, and
(\ref{eq:(dz)^k of q_(-N)}) and (\ref{eq:(dz)^k q_(-N) =00003D (dw)^k q_(N+2-k)})
coincide with (\ref{eq:(dz)^k of q_nu expressed as sum of q_nu+j})
and (\ref{eq:(dz)^k q_nu =00003D (dw)^k q_(nu+k)}) respectively in
this case. Assuming $k\geq N+2$, we will complete the proof by induction
on $k$ again. First, we verify by direct computations that 
\begin{equation}
\partial_{z}q_{1}\left(z,w,t\right)=-\frac{1}{t}q_{1}\left(z,w,t\right)+\frac{1}{t}e^{-\frac{z+w}{t}}\sum_{n=1}^{\infty}\frac{w^{n}z^{n-1}}{t^{2n}\left(n-1\right)!n!}=-\frac{1}{t}q_{1}\left(z,w,t\right)+\frac{1}{t}q_{0}\left(w,z,t\right),\label{eq:(dz)q_1}
\end{equation}
so, by (\ref{eq:(dz)^k of q_nu expressed as sum of q_nu+j}), 
\begin{align*}
 & \partial_{z}^{N+2}q_{-N}\left(z,w,t\right)\\
= & \frac{1}{t^{N+2}}\sum_{j=0}^{N}\binom{N+1}{j}\left(-1\right)^{N+2-j}q_{-N+j}\left(z,w,t\right)\\
 & \hspace{0.5cm}+\frac{1}{t^{N+2}}\sum_{j=1}^{N+1}\binom{N+1}{j-1}\left(-1\right)^{N+2-j}q_{-N+j}\left(z,w,t\right)-\frac{q_{1}\left(z,w,t\right)}{t^{N+2}}+\frac{q_{0}\left(w,z,t\right)}{t^{N+2}}\\
= & \frac{1}{t^{N+2}}\left[\sum_{j=0}^{N}\binom{N+2}{j}\left(-1\right)^{N+2-j}q_{-N+j}\left(z,w,t\right)-\left(N+2\right)q_{1}\left(z,w,t\right)+q_{0}\left(w,z,t\right)\right]\\
= & \frac{1}{t^{N+2}}\left[\sum_{j=0}^{N+1}\binom{N+2}{j}\left(-1\right)^{N+2-j}q_{-N+j}\left(z,w,t\right)+q_{0}\left(w,z,t\right)\right],
\end{align*}
which confirms that (\ref{eq:(dz)^k of q_(-N)}) is true for $k=N+2$.
Next, assume that (\ref{eq:(dz)^k of q_(-N)}) holds for some $k\geq N+2$.
Combining (\ref{eq:(dz)^k of q_nu expressed as sum of q_nu+j}), (\ref{eq:(dz)^k q_nu =00003D (dw)^k q_(nu+k)})
and (\ref{eq:(dz)q_1}), we have that $\partial_{z}^{k+1}q_{-N}\left(z,w,t\right)$
is equal to
\[
\begin{split} & \frac{1}{t^{k+1}}\sum_{j=0}^{N}\binom{k}{j}\left(-1\right)^{k+1-j}\left(q_{-N+j}\left(z,w,t\right)-q_{-N+j+1}\left(z,w,t\right)\right)+\frac{1}{t^{k+1}}\binom{k}{N+1}\left(-1\right)^{k-N}q_{1}\left(z,w,t\right)\\
 & \;-\frac{1}{t^{k+1}}\binom{k}{N+1}\left(-1\right)^{k-N}q_{0}\left(w,z,t\right)+\frac{1}{t^{k+1}}\sum_{j=N+2}^{k}\binom{k}{j}\left(-1\right)^{k+1-j}\left(q_{2+N-j}\left(w,z,t\right)-q_{1+N-j}\left(w,z,t\right)\right)\\
= & \frac{1}{t^{k+1}}\sum_{j=0}^{N}\binom{k+1}{j}\left(-1\right)^{k+1-j}q_{-N+j}\left(z,w,t\right)+\frac{1}{t^{k+1}}\binom{k+1}{N+1}\left(-1\right)^{k-N}q_{1}\left(z,w,t\right)\\
 & \hspace{0.5cm}+\frac{1}{t^{k+1}}\binom{k+1}{N+2}\left(-1\right)^{k-N-1}q_{0}\left(w,z,t\right)+\frac{1}{t^{k+1}}\sum_{j=N+3}^{k+1}\binom{k+1}{j}\left(-1\right)^{k+1-j}q_{2+N-j}\left(w,z,t\right)\\
= & \frac{1}{t^{k+1}}\sum_{j=0}^{N+1}\binom{k+1}{j}\left(-1\right)^{k+1-j}q_{-N+j}\left(z,w,t\right)+\frac{1}{t^{k+1}}\sum_{j=N+2}^{k+1}\binom{k+1}{j}\left(-1\right)^{k+1-j}q_{2+N-j}\left(w,z,t\right).
\end{split}
\]
Finally, by (\ref{eq:(dz)^k of q_(-N)}), we have that $\partial_{w}^{k}q_{N+2-k}\left(w,z,t\right)$,
with $k\geq N+2$, is equal to
\[
\begin{split} & \frac{1}{t^{k}}\sum_{j=0}^{k-N-1}\binom{k}{j}\left(-1\right)^{k-j}q_{2+N-k+j}\left(w,z,t\right)+\frac{1}{t^{k}}\sum_{j=k-N}^{k}\binom{k}{j}\left(-1\right)^{k-j}q_{k-N-j}\left(z,w,t\right)\\
= & \frac{1}{t^{k}}\sum_{l=N+1}^{k}\binom{k}{l}\left(-1\right)^{l}q_{2+N-l}\left(w,z,t\right)+\frac{1}{t^{k}}\sum_{l=0}^{N}\binom{k}{l}\left(-1\right)^{l}q_{-N+l}\left(z,w,t\right)\\
= & \left(-1\right)^{k}\partial_{z}^{k}q_{-N}\left(z,w,t\right)
\end{split}
\]
where in the last equality we used the fact that $q_{1}\left(w,z,t\right)=q_{1}\left(z,w,t\right)$
by (\ref{eq:symmetry of q_nu}).
\end{proof}
Next, we will look at the boundedness of the derivatives of $z\mapsto q_{\nu}\left(z,w,t\right)$
in a neighborhood of 0. For general $\nu$, the boundedness of $\partial_{z}^{k}q_{\nu}\left(z,w,t\right)$
for $z,w$ near 0 can be derived following (\ref{eq: estimate of q_nu}),
(\ref{eq:estimate of q_nu for nu>=00003D1}) and (\ref{eq:(dz)^k of q_nu expressed as sum of q_nu+j}).
But when $-\nu\in\mathbb{N}$, it is already clear from the series
representation in (\ref{eq:def of q_nu}) that, for every $t>0$,
$q_{\nu}\left(z,w,t\right)$ is analytic in $\left(z,w\right)$ on
$\left(0,\infty\right)^{2}$, which certainly implies the boundedness
of derivatives of all orders in any neighborhood of the origin. We
state these simple facts without proofs as follows.
\begin{cor}
\label{cor:regularity of bar q at 0} If $\nu\in\left(-\infty,1\right)\backslash\mathbb{Z}$,
then for every $t>0$, every $k\in\mathbb{N}$ and every $M>0$, 
\[
\sup_{\left(z,w\right)\in\left(0,M\right)^{2}}z^{\left(\nu+k-1\right)\vee0}\left|\partial_{z}^{k}q_{\nu}\left(z,w,t\right)\right|<\infty.
\]
In particular, $z\mapsto q_{\nu}\left(z,w,t\right)$ has bounded derivatives
up to the order of $\left[1-\nu\right]$ when $z,w$ are near 0.

If $-\nu\in\mathbb{N}$, then for every $t>0$, every $k\in\mathbb{N}$
and every $M>0$,
\[
\sup_{\left(z,w\right)\in(0,M)^{2}}\left|\partial_{z}^{k}q_{\nu}\left(z,w,t\right)\right|<\infty,
\]
i.e., $z\mapsto q_{\nu}\left(z,w,t\right)$ has bounded derivatives
of all orders when $z,w$ are near 0.
\end{cor}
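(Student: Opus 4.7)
The plan is to treat the two cases separately, using Lemma \ref{lem:technical lemma}.

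\textbf{Case 1: $\nu \in (-\infty,1) \setminus \mathbb{Z}$.} I would start from the identity (\ref{eq:(dz)^k of q_nu expressed as sum of q_nu+j}), which expresses $\partial_z^k q_\nu$ as $t^{-k}$ times a finite linear combination of $q_{\nu+j}$, $j=0,1,\ldots,k$. Since $\nu\notin\mathbb{Z}$, each $\sigma := \nu+j$ lies in $\mathbb{R}\setminus\{2,3,\ldots\}$, so $q_\sigma$ is well defined by (\ref{eq:def of q_sigma}) and admits the estimate (\ref{eq: estimate of q_nu}) when $\sigma<1$ and the estimate (\ref{eq:estimate of q_nu for nu>=00003D1}) when $\sigma\geq 1$. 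The key intermediate claim I would prove is that
\[
\sup_{(z,w)\in(0,M)^2} z^{(\sigma-1)\vee 0}\, |q_\sigma(z,w,t)| < \infty \qquad \text{for every such } \sigma.
\]
When $\sigma<1$ this is immediate because $1-\sigma>0$ makes the right-hand side of (\ref{eq: estimate of q_nu}) uniformly bounded on $(0,M)^2$. When $\sigma\geq 1$, I would analyse the two summands of (\ref{eq:estimate of q_nu for nu>=00003D1}) separately: the second has $z$-exponent $3-\sigma+[\sigma-2]\in(0,1]$, so is bounded with no weight; the first has worst $z$-behavior $z^{1-\sigma}$ in the sub-regime $zw\leq t^2$ and $z^{[\sigma-2]+2-\sigma}w^{[\sigma-2]+1}$ in the sub-regime $zw\geq t^2$, and multiplication by $z^{\sigma-1}$ turns the $z$-exponents into $0$ and $[\sigma-2]+1 \geq 0$ respectively, with the $w$-factor harmless on $(0,M)$.

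Given this weighted bound, the elementary inequality $(\nu+k-1)\vee 0 \geq (\nu+j-1)\vee 0$ for $0\leq j\leq k$ lets me upgrade the weight $z^{(\nu+j-1)\vee 0}$ to $z^{(\nu+k-1)\vee 0}$ at the cost of a factor bounded on $(0,M)$, and summing the $k+1$ resulting estimates yields the claim for $\partial_z^k q_\nu$.

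\textbf{Case 2: $-\nu = N \in \mathbb{N}$.} Now $\Gamma(n+2-\nu) = (n+N+1)!$, so the series in (\ref{eq:def of q_sigma}) converges together with all its $(z,w)$-derivatives locally uniformly on $\mathbb{R}^2$. Multiplied by the smooth prefactor $z^{N+1}t^{-N-2}e^{-(z+w)/t}$, this shows that $(z,w)\mapsto q_{-N}(z,w,t)$ lies in $C^\infty(\mathbb{R}^2)$ for each fixed $t>0$, hence its spatial derivatives of every order are continuous and therefore bounded on the compact set $[0,M]^2$.

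The main obstacle is the analysis for $\sigma\geq 1$ in Case 1: the piecewise factor $(zw/t^2)\vee 1$ in (\ref{eq:estimate of q_nu for nu>=00003D1}) forces a split into the sub-regimes $\{zw\leq t^2\}$ and $\{zw\geq t^2\}$, and one must verify that the weight $z^{\sigma-1}$ absorbs the $z$-singularity in both without introducing a $w$-singularity. Beyond that, the proof reduces to routine bookkeeping of exponents and a finite sum.
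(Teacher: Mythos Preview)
Your proposal is correct and follows precisely the route the paper indicates: the paper states this corollary without proof, remarking only that it ``can be derived following (\ref{eq: estimate of q_nu}), (\ref{eq:estimate of q_nu for nu>=00003D1}) and (\ref{eq:(dz)^k of q_nu expressed as sum of q_nu+j})'' for the non-integer case and from analyticity of the series for the integer case. You have supplied exactly those details. One minor simplification: for the first summand of (\ref{eq:estimate of q_nu for nu>=00003D1}) there is no need to split into the two sub-regimes, since after multiplying by $z^{\sigma-1}$ the factor $z^{1-\sigma}$ is cancelled outright and $\bigl(\tfrac{zw}{t^2}\vee 1\bigr)^{[\sigma-2]+1}$ is directly bounded on $(0,M)^2$ because the exponent $[\sigma-2]+1\geq 0$.
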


Finally, we turn our attention to the derivatives of $v_{g}\left(z,t\right)$
in $z$. Seeing from Corollary \ref{cor:regularity of bar q at 0},
it is reasonable to split our discussions according to whether $\nu$
is a non-positive integer or not. 
\begin{prop}
\label{prop:derivative of v_g,nu, with nu not being integer}Assume
that $\nu\in\left(-\infty,1\right)\backslash\mathbb{Z}$. Given $g\in C_{b}\left(\left(0,\infty\right)\right)$,
let $v_{g}\left(z,t\right)$ be defined as in (\ref{eq: def of v_g(z,t)})
for the given value of $\nu$. If, for every $k\in\mathbb{N}$, we
set 
\begin{equation}
v_{g}^{\left(k\right)}\left(z,t\right):=\partial_{z}^{k}v_{g}\left(z,t\right)\text{ for }\left(z,t\right)\in\left(0,\infty\right)^{2},\label{eq:def of v^(k)_g}
\end{equation}
then $v_{g}^{\left(k\right)}\left(z,t\right)$ satisfies that, for
every $\left(z,t\right)\in\left(0,\infty\right)^{2}$,
\begin{equation}
\partial_{t}v_{g}^{\left(k\right)}\left(z,t\right)=z\partial_{z}^{2}v_{g}^{\left(k\right)}\left(z,t\right)+\left(\nu+k\right)\partial_{z}v_{g}^{\left(k\right)}\left(z,t\right)\label{eq:model equation of k derivative}
\end{equation}
and 
\begin{equation}
v_{g}^{\left(k\right)}\left(z,t\right)=\int_{0}^{\infty}\partial_{z}^{k}q_{\nu}\left(z,w,t\right)g\left(w\right)dw.\label{eq:expression of (dz)^k of v_g as integral of (dz)^k q_nu}
\end{equation}
In particular, for every $t>0$,
\[
\lim_{z\searrow0}v_{g}^{\left(k\right)}\left(z,t\right)=0\text{ if }k\in\left\{ 0,1,2,\cdots,\left[1-\nu\right]\right\} ,
\]
and
\[
\lim_{z\searrow0}z^{\nu-1+k}v_{g}^{\left(k\right)}\left(z,t\right)=\frac{\int_{0}^{\infty}e^{-\frac{w}{t}}g\left(w\right)dw}{\Gamma\left(2-\nu-k\right)t^{2-\nu}}\text{ if }k\ge\left[1-\nu\right]+1.
\]

Further, if $g\in C^{k}\left(\left(0,\infty\right)\right)$ is such
that $C_{k}^{g}<\infty$ and
\[
\lim_{z\searrow0}g^{\left(j\right)}\left(z\right)=0\text{ for every }j\in\left\{ 0,1,2,\cdots,k-1\right\} ,
\]
then 
\begin{equation}
v_{g}^{\left(k\right)}\left(z,t\right)=\int_{0}^{\infty}q_{\nu+k}\left(z,w,t\right)g^{\left(k\right)}\left(w\right)dw\text{ for every }\left(z,t\right)\in\left(0,\infty\right)^{2}\label{eq:expression of (dz)^k of v_g as integral of g^(k)}
\end{equation}
and in particular, 
\[
\lim_{t\searrow0}v_{g}^{\left(k\right)}\left(z,t\right)=g^{\left(k\right)}\left(z\right)\text{ for every }z>0.
\]
\end{prop}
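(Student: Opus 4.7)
\emph{Proof plan.} Since $v_g$ is smooth on $(0,\infty)^2$ by Proposition \ref{prop: q_nu is the fundamental solution}, the classical derivative $v_g^{(k)}=\partial_z^k v_g$ is well defined. My first task is to justify differentiating under the integral sign in (\ref{eq: def of v_g(z,t)}) to obtain (\ref{eq:expression of (dz)^k of v_g as integral of (dz)^k q_nu}). For this I would use formula (\ref{eq:(dz)^k of q_nu expressed as sum of q_nu+j}) of Lemma \ref{lem:technical lemma}, which expresses $\partial_z^k q_\nu$ as a $t$-dependent linear combination of $q_{\nu+j}$ for $j=0,\dots,k$, together with the dominating bounds (\ref{eq: estimate of q_nu}) (when $\nu+j<1$) and (\ref{eq:estimate of q_nu for nu>=00003D1}) (when $\nu+j\ge 1$); these bounds are integrable in $w$ and uniform in $z$ on compact subsets of $(0,\infty)$, so dominated convergence yields (\ref{eq:expression of (dz)^k of v_g as integral of (dz)^k q_nu}). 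The PDE (\ref{eq:model equation of k derivative}) then follows by induction on $k$: differentiating $\partial_t v_g=z\partial_z^2 v_g+\nu\partial_z v_g$ once in $z$ gives $\partial_t(\partial_z v_g)=z\partial_z^2(\partial_z v_g)+(\nu+1)\partial_z(\partial_z v_g)$, and each subsequent differentiation increases the drift coefficient by one.

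For the limits as $z\searrow 0$, I would extract the leading-order expansion of $q_{\nu+j}(z,w,t)$ from the power series in (\ref{eq:def of q_sigma}), namely
\[ q_{\nu+j}(z,w,t)=\frac{z^{1-\nu-j}}{t^{2-\nu-j}\,\Gamma(2-\nu-j)}\,e^{-w/t}+O(z^{2-\nu-j}) \quad \text{as } z\searrow 0, \]
and combine it with (\ref{eq:(dz)^k of q_nu expressed as sum of q_nu+j}). When $k\le[1-\nu]$, the inequality $k<1-\nu$ guarantees that every exponent $1-\nu-j$ for $0\le j\le k$ is positive, so every contribution vanishes and $v_g^{(k)}(z,t)\to 0$. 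When $k\ge[1-\nu]+1$, multiplying by $z^{\nu-1+k}$ converts each factor $z^{1-\nu-j}$ into $z^{k-j}$; only the $j=k$ term survives in the limit, and extracting its coefficient recovers the stated value $(\Gamma(2-\nu-k)t^{2-\nu})^{-1}\int_0^\infty e^{-w/t}g(w)\,dw$.

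For the identity (\ref{eq:expression of (dz)^k of v_g as integral of g^(k)}), I would apply (\ref{eq:(dz)^k q_nu =00003D (dw)^k q_(nu+k)}) to rewrite the integrand of (\ref{eq:expression of (dz)^k of v_g as integral of (dz)^k q_nu}) as $(-1)^k\partial_w^k q_{\nu+k}(z,w,t)\,g(w)$ and then integrate by parts $k$ times in $w$. The boundary terms at $w=\infty$ vanish because of the exponential factor in $q_{\nu+k}$; the boundary terms at $w=0$ vanish because inspection of (\ref{eq:def of q_sigma}) shows that $q_{\nu+k}(z,w,t)$ extends to a real-analytic function of $w$ on a neighbourhood of $0$ (so all its $w$-derivatives are finite at $w=0^+$), while the hypothesis $g^{(j)}(0^+)=0$ for $0\le j\le k-1$ kills the product at each integration step. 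The resulting formula (\ref{eq:expression of (dz)^k of v_g as integral of g^(k)}) presents $v_g^{(k)}(z,t)$ in the same shape as $v_g$ in (\ref{eq: def of v_g(z,t)}) but with parameter $\sigma=\nu+k$ and initial datum $g^{(k)}$, so the final claim $\lim_{t\searrow 0}v_g^{(k)}(z,t)=g^{(k)}(z)$ reduces to extending the limit argument of Proposition \ref{prop: q_nu is the fundamental solution} to this parameter. Concentration of $q_\sigma(z,\cdot,t)$ near $w=z$ still follows from (\ref{eq: estimate of q_nu}) or (\ref{eq:estimate of q_nu for nu>=00003D1}). The main obstacle, and the point requiring extra work, is to verify that the total mass $\int_0^\infty q_\sigma(z,w,t)\,dw$ still tends to $1$ as $t\searrow 0$ when $\sigma\ge 1$, since the incomplete-gamma identity used in (\ref{eq:total mass of q_nu}) relied on $\sigma<1$; I would treat this by a Laplace- or Stirling-type asymptotic applied to the explicit series $(z/t)^{1-\sigma}e^{-z/t}\sum_n(z/t)^n/\Gamma(n+2-\sigma)$, in which the dominant contribution comes from $n\approx z/t$ and restores the limit~$1$.
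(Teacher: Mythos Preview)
Your proposal is correct and follows essentially the same route as the paper: differentiation under the integral is justified via (\ref{eq:(dz)^k of q_nu expressed as sum of q_nu+j}) together with the bounds (\ref{eq: estimate of q_nu}) and (\ref{eq:estimate of q_nu for nu>=00003D1}); the PDE (\ref{eq:model equation of k derivative}) comes from differentiating (\ref{eq:model equation}); the boundary limits are read off from the leading term of each $q_{\nu+j}$; and (\ref{eq:expression of (dz)^k of v_g as integral of g^(k)}) is obtained from (\ref{eq:(dz)^k q_nu =00003D (dw)^k q_(nu+k)}) by repeated integration by parts in $w$.

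The only place you diverge from the paper is the final step, the total-mass limit $\int_0^\infty q_{\nu+k}(z,w,t)\,dw\to 1$ for $\nu+k\ge 1$. You propose a Laplace/Stirling argument, which would work but is heavier than necessary. The paper simply invokes (\ref{eq:estimate of q_nu for nu>=00003D1}) to justify the term-by-term integration, after which the computation in (\ref{eq:total mass of q_nu}) still yields
\[
\int_0^\infty q_{\nu+k}(z,w,t)\,dw=\Bigl(\tfrac{z}{t}\Bigr)^{1-\nu-k}e^{-z/t}\sum_{n\ge 0}\frac{(z/t)^n}{\Gamma(n+2-\nu-k)}=\frac{\gamma(1-\nu-k,\,z/t)}{\Gamma(1-\nu-k)}=1-\frac{\Gamma(1-\nu-k,\,z/t)}{\Gamma(1-\nu-k)},
\]
where the incomplete-gamma identity extends to all non-integer $\nu+k$ by analytic continuation (or directly via $\gamma(s,\xi)=\Gamma(s)-\Gamma(s,\xi)$, noting that $\Gamma(s,\xi)$ is entire in $s$ for $\xi>0$). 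Since $\Gamma(1-\nu-k,\,z/t)\to 0$ as $t\searrow 0$, the limit equals $1$, with the off-diagonal decay coming from the second term of (\ref{eq:estimate of q_nu for nu>=00003D1}).
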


\begin{proof}
Let $v_{g}\left(z,t\right)$ be defined as in (\ref{eq: def of v_g(z,t)}).
It follows from (\ref{eq: estimate of q_nu}) and (\ref{eq:estimate of q_nu for nu>=00003D1})
that one can compute the derivatives of $v_{g}\left(z,t\right)$ in
$z$ by differentiating under the integral sign (\ref{eq: def of v_g(z,t)}).
Combining with (\ref{eq:model equation}), we can easily see that
$v_{g}^{\left(k\right)}\left(z,t\right)$ is smooth on $\left(0,\infty\right)^{2}$
and satisfies (\ref{eq:model equation of k derivative}) and (\ref{eq:expression of (dz)^k of v_g as integral of (dz)^k q_nu})
for every $k\in\mathbb{N}$. By (\ref{eq:(dz)^k of q_nu expressed as sum of q_nu+j}),
we have that 
\[
v_{g}^{\left(k\right)}\left(z,t\right)=\frac{1}{t^{k}}\int_{0}^{\infty}\sum_{j=0}^{k}\binom{k}{j}\left(-1\right)^{k-j}q_{\nu+j}\left(z,w,t\right)g\left(w\right)dw.
\]
Thus, $v_{g}^{\left(k\right)}\left(z,t\right)$ has the stated limit
or asymptotics as $z\searrow0$ due to the simple fact that, for every
$\left(w,t\right)\in\left(0,\infty\right)^{2}$, as $z\searrow0$,
\[
\sum_{j=0}^{k}\binom{k}{j}\left(-1\right)^{k-j}q_{\nu+j}\left(z,w,t\right)\rightarrow0\text{ if }k\in\left\{ 0,1,2,\cdots,\left[1-\nu\right]\right\} 
\]
and 
\[
z^{\nu-1+k}\sum_{j=0}^{k}\binom{k}{j}\left(-1\right)^{k-j}q_{\nu+j}\left(z,w,t\right)\rightarrow\frac{e^{-\frac{w}{t}}}{\Gamma\left(2-\nu-k\right)t^{2-\nu-k}}\text{ if }k\geq\left[1-\nu\right]+1.
\]

As for the second statement of Proposition \ref{prop:derivative of v_g,nu, with nu not being integer},
given the extra hypothesis on $g$, we can easily derive (\ref{eq:expression of (dz)^k of v_g as integral of g^(k)})
from (\ref{eq:expression of (dz)^k of v_g as integral of (dz)^k q_nu})
by performing integration by parts multiple times. Given (\ref{eq:expression of (dz)^k of v_g as integral of g^(k)}),
to see that $v_{g}^{\left(k\right)}\left(z,t\right)$ has initial
data $g^{\left(k\right)}$ even when $\nu+k\geq1$, we simply apply
the same argument as the one used to show that $v_{g}\left(z,t\right)$
has initial data $g$. In particular, it suffices to notice that,
by (\ref{eq:estimate of q_nu for nu>=00003D1}), $\lim_{t\searrow0}\int_{0}^{\infty}q_{\nu+k}\left(z,w,t\right)dw=1$
and $\lim_{t\searrow0}\int_{(0,\infty)\backslash\left(z-\delta,z+\delta\right)}q_{\nu+k}\left(z,w,t\right)dw=0$
for every $\delta>0$, and the convergence in each limit is uniformly
fast in $z$ in any compact subset of $\left(0,\infty\right)$. 
\end{proof}
Now assume that $-\nu\in\mathbb{N}$, say, $\nu=-N$ for some $N\in\mathbb{N}$.
Since $z\mapsto q_{-N}\left(z,w,t\right)$ has bounded derivatives
of all orders near 0, we would expect that $v_{g}\left(z,t\right)$
has the same property. When $k\leq N+1$, it is easy to see that the
statements of Proposition \ref{prop:derivative of v_g,nu, with nu not being integer}
still apply to $v_{g}^{\left(k\right)}\left(z,t\right)$ with minor
changes in the expressions of initial data. However, when $k\geq N+2$,
(\ref{eq:(dz)^k of q_(-N)}) and (\ref{eq:(dz)^k q_(-N) =00003D (dw)^k q_(N+2-k)})
indicate that we should consider the operator $L_{N-k+2}^{*}$, the
adjoint of $L_{N-k+2}$. Indeed, $L_{-N+k}$ coincides with $L_{N-k+2}^{*}$,
and hence $v_{g}^{\left(k\right)}\left(z,t\right)$ is also a solution
to $\left(\partial_{t}-L_{N-k+2}^{*}\right)v_{g}^{\left(k\right)}\left(z,t\right)=0$.
We will make these considerations rigorous in the next proposition.
\begin{prop}
\label{prop:derivative of v_g,nu, with nu integer}Assume that $\nu=-N$
for some $N\in\mathbb{N}$. Given $g\in C_{b}\left(\left(0,\infty\right)\right)$,
let $v_{g}\left(z,t\right)$ be defined as in (\ref{eq: def of v_g(z,t)})
for the given value of $\nu$. For $k\in\mathbb{N}$, let $v_{g}^{\left(k\right)}\left(z,t\right)$
be defined as in (\ref{eq:def of v^(k)_g}). Then, for every $\left(z,t\right)\in\left(0,\infty\right)^{2}$,
$v_{g}^{\left(k\right)}\left(z,t\right)$ satisfies (\ref{eq:model equation of k derivative})
and (\ref{eq:expression of (dz)^k of v_g as integral of (dz)^k q_nu})
with 
\[
\lim_{z\searrow0}v_{g}^{\left(k\right)}\left(z,t\right)=\begin{cases}
0, & \text{ if }k\leq N,\\
\frac{1}{t^{N+2}}\int_{0}^{\infty}e^{-\frac{w}{t}}g\left(w\right)dw, & \text{ if }k=N+1,\\
\sum_{j=N+1}^{k}\binom{k}{j}\left(-1\right)^{k-j}\frac{\int_{0}^{\infty}e^{-\frac{w}{t}}w^{j-N-1}g\left(w\right)dw}{t^{k+j-N}\left(j-N-1\right)!}, & \text{ if }k\geq N+2.
\end{cases}
\]

Furthermore, if $g\in C^{k}\left(0,\infty\right)$ is such that $C_{k}^{g}<\infty$
and 
\[
\lim_{z\searrow0}g^{\left(j\right)}\left(z\right)=0\text{ for every }j\in\left\{ 0,1,\cdots,N\wedge\left(k-1\right)\right\} ,
\]
then for every $\left(z,t\right)\in\left(0,\infty\right)^{2}$,
\begin{equation}
v_{g}^{\left(k\right)}\left(z,t\right)=\begin{cases}
\int_{0}^{\infty}q_{-N+k}\left(z,w,t\right)g^{\left(k\right)}\left(w\right)dw, & \text{ if }k\leq N+1,\\
\int_{0}^{\infty}q_{2+N-k}\left(w,z,t\right)g^{\left(k\right)}\left(w\right)dw, & \text{ if }k\geq N+2,
\end{cases}\label{eq:expression of (dz)^k of v_g when nu=00003D-N}
\end{equation}
and
\[
\lim_{t\searrow0}v_{g}^{\left(k\right)}\left(z,t\right)=g^{\left(k\right)}\left(z\right)\text{ for every }z>0.
\]
\end{prop}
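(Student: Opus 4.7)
The plan is to mirror the proof of Proposition \ref{prop:derivative of v_g,nu, with nu not being integer}, now relying on the identities (\ref{eq:(dz)^k of q_(-N)}) and (\ref{eq:(dz)^k q_(-N) =00003D (dw)^k q_(N+2-k)}) from Lemma \ref{lem:technical lemma} in place of (\ref{eq:(dz)^k of q_nu expressed as sum of q_nu+j}) and (\ref{eq:(dz)^k q_nu =00003D (dw)^k q_(nu+k)}). First, I would differentiate (\ref{eq: def of v_g(z,t)}) under the integral sign to obtain (\ref{eq:expression of (dz)^k of v_g as integral of (dz)^k q_nu}); the interchange is justified by decomposing $\partial_z^k q_{-N}(z,w,t)$ via (\ref{eq:(dz)^k of q_(-N)}) and bounding each summand by (\ref{eq: estimate of q_nu}) for the $q_{-N+j}(z,w,t)$ pieces and by (\ref{eq:estimate of q_nu for nu>=00003D1}) for the $q_{2+N-j}(w,z,t)$ pieces, locally uniformly in $(z,t)$. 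The PDE (\ref{eq:model equation of k derivative}) then follows by differentiating the equation in (\ref{eq:model equation}) $k$ times in $z$, noting that $L_{-N+k}$ is exactly the operator with drift coefficient $\nu+k$.

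For the pointwise behavior as $z\searrow 0$, I would plug (\ref{eq:(dz)^k of q_(-N)}) into (\ref{eq:expression of (dz)^k of v_g as integral of (dz)^k q_nu}) and analyze each term. In the first sum, $q_{-N+j}(z,w,t)$ with $j\leq N$ carries a factor $z^{N+1-j}$ that forces the limit to $0$, whereas at $j=N+1$ one has $q_{1}(z,w,t)\to e^{-w/t}/t$. In the second sum, expanding the series in (\ref{eq:def of q_sigma}) shows that only the $n=0$ term of $q_{2+N-j}(w,z,t)$ survives as $z\searrow 0$, yielding $w^{j-N-1}e^{-w/t}/(t^{j-N}(j-N-1)!)$. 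The three cases $k\leq N$, $k=N+1$, and $k\geq N+2$ correspond to whether no terms, only the $j=N+1$ term, or all terms $j=N+1,\dots,k$ (with the convention $0!=1$ and $w^{0}=1$ at $j=N+1$) contribute; reassembling with the coefficients $\binom{k}{j}(-1)^{k-j}/t^{k}$ and integrating against $g(w)$ gives the stated limits.

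Next, under the additional hypothesis on $g$, I would establish the integral formula (\ref{eq:expression of (dz)^k of v_g when nu=00003D-N}) by applying (\ref{eq:(dz)^k q_(-N) =00003D (dw)^k q_(N+2-k)}) and integrating by parts $k$ times in $w$. The boundary at infinity is controlled by the exponential decay in (\ref{eq: estimate of q_nu}) and (\ref{eq:estimate of q_nu for nu>=00003D1}). At $w=0$, when $k\leq N+1$ each $\partial_w^{j}q_{-N+k}(z,w,t)\big|_{w=0}$ is finite (since $q_{-N+k}$ is analytic in $w$ at $0$ by (\ref{eq:def of q_sigma})) and the hypothesis $g^{(l)}(0)=0$ for $l\leq k-1$ kills every boundary term; when $k\geq N+2$, the prefactor $w^{k-N-1}$ in $q_{2+N-k}(w,z,t)$ forces $\partial_w^{j}q_{2+N-k}(w,z,t)\big|_{w=0}=0$ for $j\leq k-N-2$, so only the boundary contributions with $g^{(l)}(0)$ for $l=0,\dots,N$ survive, and these are killed by hypothesis. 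The convergence $v_g^{(k)}(z,t)\to g^{(k)}(z)$ as $t\searrow 0$ then follows by the same argument as at the end of Proposition \ref{prop: q_nu is the fundamental solution}: a direct calculation parallel to (\ref{eq:total mass of q_nu}) shows that the kernel integrates to $1$ in the relevant slot, while the bound $e^{-(\sqrt{z}-\sqrt{w})^{2}/t}$ ensures concentration near $w=z$.

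The main obstacle in my view is the index bookkeeping when $k\geq N+2$: tracking which terms of (\ref{eq:(dz)^k of q_(-N)}) survive in the limit $z\searrow 0$, aligning the $n=0$ Bessel-series coefficients with the explicit factorials and powers of $t$ in the statement, and verifying that the boundary-term cancellation in the integration-by-parts step requires only vanishing of $g^{(l)}(0)$ up to order $N$ rather than order $k-1$—which is exactly what makes the weaker hypothesis $N\wedge(k-1)$ on $g$ sufficient in this regime.
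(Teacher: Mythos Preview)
Your proposal is correct and follows essentially the same line as the paper. The only organizational difference is in the derivation of (\ref{eq:expression of (dz)^k of v_g when nu=00003D-N}) for $k\geq N+2$: the paper first invokes the $k\leq N+1$ case to reach $v_g^{(N+1)}(z,t)=\int q_1(w,z,t)g^{(N+1)}(w)\,dw$ via the symmetry $q_1(z,w,t)=q_1(w,z,t)$, and then inducts in $k$ one step at a time (a single integration by parts per step, where the boundary term at $w=0$ vanishes automatically because $q_{1+N-k}(w,z,t)$ carries a factor $w^{k-N}$); you instead perform all $k$ integrations by parts at once and read off directly which boundary terms require the hypothesis $g^{(l)}(0)=0$ for $l\leq N$. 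Both arguments are equivalent and your bookkeeping of the surviving boundary terms is accurate.
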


\begin{proof}
When $\nu=-N$, by (\ref{eq:(dz)^k of q_(-N)}), one can prove (\ref{eq:model equation of k derivative})
and (\ref{eq:expression of (dz)^k of v_g as integral of (dz)^k q_nu})
in exactly the same way as in Proposition \ref{prop:derivative of v_g,nu, with nu not being integer}.
Besides, it is clear from (\ref{eq:def of q_sigma}) and (\ref{eq:(dz)^k of q_(-N)})
that 
\[
\lim_{z\searrow0}\partial_{z}^{k}q_{-N}\left(z,w,t\right)=\begin{cases}
0, & \text{if }k\leq N,\\
\frac{1}{t^{2+N}}e^{-\frac{w}{t}}, & \text{if }k=N+1,\\
\sum_{j=N+1}^{k}\binom{k}{j}\left(-1\right)^{k-j}e^{-\frac{w}{t}}\frac{w^{j-N-1}}{t^{k+j-N}\left(j-N-1\right)!} & \text{if }k\geq N+2,
\end{cases}
\]
from which it follows that $v_{g}^{\left(k\right)}\left(z,t\right)$
has the stated boundary value. 

To prove (\ref{eq:expression of (dz)^k of v_g when nu=00003D-N}),
we notice that the proof of (\ref{eq:expression of (dz)^k of v_g as integral of g^(k)})
still applies to the case when $k\leq N+1$. In particular, it implies
that 
\[
v_{g}^{\left(N+1\right)}\left(z,t\right)=\int_{0}^{\infty}q_{1}\left(z,w,t\right)g^{\left(N+1\right)}\left(w\right)dw=\int_{0}^{\infty}q_{1}\left(w,z,t\right)g^{\left(N+1\right)}\left(w\right)dw,
\]
where again we used the symmetry of $q_{1}\left(z,w,t\right)$ in
$\left(z,w\right)$. In other words, (\ref{eq:expression of (dz)^k of v_g when nu=00003D-N})
is true for all $k\in\left\{ 0,1,2,\cdots,N+1\right\} $. Assume that
(\ref{eq:expression of (dz)^k of v_g when nu=00003D-N}) holds for
some $k\geq N+1$. Following (\ref{eq:(dz)^k q_(-N) =00003D (dw)^k q_(N+2-k)})
and the hypothesis on $g$, we have that 
\[
\begin{split}v_{g}^{\left(k+1\right)}\left(z,t\right) & =\int_{0}^{\infty}\partial_{z}q_{2+N-k}\left(w,z,t\right)g^{\left(k\right)}\left(w\right)dw\\
 & =-\int_{0}^{\infty}\partial_{w}q_{1+N-k}\left(w,z,t\right)g^{\left(k\right)}\left(w\right)dw\\
 & =\int_{0}^{\infty}q_{2+N-\left(k+1\right)}\left(w,z,t\right)g^{\left(k+1\right)}\left(w\right)dw
\end{split}
\]
which validates (\ref{eq:expression of (dz)^k of v_g when nu=00003D-N})
for $k+1$.
\end{proof}
\begin{rem}
We will finish this section with two remarks on the derivatives of
$q_{\nu}\left(z,w,t\right)$. The first remark is that, when $\nu=-N$
and $k\geq N+2$, we observe that 
\[
q_{N+2-k}\left(w,z,t\right)=q_{-N+k}^{*}\left(z,w,t\right)\text{ for every }\left(z,w,t\right)\in\left(0,\infty\right)^{3},
\]
where $q_{-N+k}^{*}\left(z,w,t\right)$ is the fundamental solution
to the equation in (\ref{eq:model equation}) under the zero flux
boundary condition, as defined in Remark \ref{rem:the other formula for q_nu}.
This is not surprising, because, since it has bounded derivatives
of all orders near 0, $v_{g}^{\left(k\right)}\left(z,t\right)$ is
a solution to (\ref{eq:model equation of k derivative}) that satisfies
the zero flux boundary condition. 

The second remark is on a simplification of the notations involving
``$q_{\nu+k}\left(z,w,t\right)$''. Namely, for our purpose of studying
$\partial_{z}^{k}q_{\nu}\left(z,w,t\right)$ and $v_{g}^{\left(k\right)}\left(z,t\right)$
as in Lemma \ref{lem:technical lemma} and Proposition \ref{prop:derivative of v_g,nu, with nu integer},
$q_{2+N-k}\left(w,z,t\right)$ for $k\geq N+2$ plays the same role
as $q_{-N+k}\left(z,w,t\right)$ for $k\leq N+1$. Therefore, for
the convenience of notations, we further extend the definition of
$q_{\nu+k}\left(z,w,t\right)$ by setting, for every $\left(z,w,t\right)\in\left(0,\infty\right)^{3}$,
\begin{equation}
Q_{\nu+k}\left(z,w,t\right):=\begin{cases}
q_{\nu+k}\left(z,w,t\right), & \text{when }\nu\in\left(-\infty,1\right)\backslash\mathbb{Z},k\in\mathbb{N},\\
q_{2-\nu-k}\left(w,z,t\right), & \text{when }-\nu\in\mathbb{N},k\geq2-\nu.
\end{cases}\label{eq:def of Q_nu}
\end{equation}
Under this new notation, it is easy to see that, for any $\nu<1$
and $k,l\in\mathbb{N}$, (\ref{eq:(dz)^k of q_nu expressed as sum of q_nu+j}),
(\ref{eq:(dz)^k q_nu =00003D (dw)^k q_(nu+k)}), (\ref{eq:(dz)^k of q_(-N)})
and (\ref{eq:(dz)^k q_(-N) =00003D (dw)^k q_(N+2-k)}) can be combined
into the following relation:
\begin{equation}
\partial_{z}^{k}Q_{\nu+l}\left(z,w,t\right)=\left(-1\right)^{k}\partial_{w}^{k}Q_{\nu+l+k}\left(z,w,t\right)=\frac{1}{t^{k}}\sum_{j=0}^{k}\binom{k}{j}\left(-1\right)^{k-j}Q_{\nu+l+j}\left(z,w,t\right).\label{eq:(dz)^k of Q_nu+l}
\end{equation}
Similarly, (\ref{eq:expression of (dz)^k of v_g as integral of g^(k)})
and (\ref{eq:expression of (dz)^k of v_g when nu=00003D-N}) also
merge into one statement that, for every $k\in\mathbb{N}$, if $g\in C^{k}\left(\left(0,\infty\right)\right)$
is such that $C_{k}^{g}<\infty$ and 
\[
\lim_{z\searrow0}g^{\left(j\right)}\left(z\right)=0\text{ for }\begin{cases}
j\in\left\{ 0,1,\cdots,k-1\right\}  & \text{ when }\nu\in\left(-\infty,1\right)\backslash\mathbb{Z},\\
j\in\left\{ 0,1,\cdots,\left(k-1\right)\wedge\left(-\nu\right)\right\}  & \text{ when }-\nu\in\mathbb{N},
\end{cases}
\]
then
\begin{equation}
v_{g}^{\left(k\right)}\left(z,t\right)=\int_{0}^{\infty}Q_{\nu+k}\left(z,w,t\right)g^{\left(k\right)}\left(w\right)dw\text{ for every }\left(z,t\right)\in\left(0,\infty\right)^{2}.\label{eq:expression of (dz)^k of v_g as integral of g^(k) using Q}
\end{equation}
\end{rem}

\section{General Equation}

In this section we will take several steps to construct the fundamental
solution $p\left(x,y,t\right)$ to the general problem (\ref{eq:general IVP equation})
based on $q_{\nu}\left(z,w,t\right)$ and the perturbation techniques.
Throughout this section we will assume that $a\left(x\right)$ and
$b\left(x\right)$ satisfy Condition 1-3 as proposed in $\mathsection1.2$,
and hence we always have $\nu<1$. 

\subsection{The model equation with an extra drift.}

To connect (\ref{eq:general IVP equation}) and (\ref{eq:model equation}),
we begin with a change of variables that turns (\ref{eq:general IVP equation})
into a variation of (\ref{eq:model equation}) with an extra drift.
Recall that for $x>0$,
\[
\phi\left(x\right):=\frac{1}{4}\left(\int_{0}^{x}\frac{ds}{\sqrt{a\left(s\right)}}\right)^{2}\text{ and }d\left(x\right):=\frac{1}{2}+\frac{2b\left(x\right)-a^{\prime}\left(x\right)}{2\sqrt{a\left(x\right)}}\sqrt{\phi\left(x\right)}-\nu
\]
where
\[
\nu=\frac{1}{2}+\lim_{x\searrow0}\frac{2b\left(x\right)-a^{\prime}\left(x\right)}{2\sqrt{a\left(x\right)}}\sqrt{\phi\left(x\right)}<1.
\]
Besides, $\psi:z\in\left(0,\infty\right)\mapsto\psi\left(z\right)\in\left(0,\infty\right)$
is the inverse function of $\phi$ and $\tilde{d}:=d\circ\psi$. Consider
the following Cauchy initial value problem with the Dirichlet boundary
condition:
\begin{equation}
\begin{array}{c}
\partial_{t}\tilde{v}_{g}\left(z,t\right)=z\partial_{z}^{2}\tilde{v}_{g}\left(z,t\right)+\left(\nu+\tilde{d}\left(z\right)\right)\partial_{z}\tilde{v}_{g}\left(z,t\right)\text{ for }\left(z,t\right)\in\left(0,\infty\right)^{2},\\
\lim_{t\searrow0}\tilde{v}_{g}\left(z,t\right)=g\left(z\right)\text{ for }z\in\left(0,\infty\right)\text{ and }\lim_{z\searrow0}\tilde{v}_{g}\left(z,t\right)=0\text{ for }t\in\left(0,\infty\right).
\end{array}\label{eq:model eq with drift}
\end{equation}
\begin{lem}
\label{lem:change of variable} Give $f\in C_{b}\left(\left(0,\infty\right)\right)$
and $g:=f\circ\psi$, $\tilde{v}_{g}\left(z,t\right)\in C^{2,1}\left(\left(0,\infty\right)^{2}\right)$
is a solution to (\ref{eq:model eq with drift}) with initial data
$g$ if and only if
\begin{equation}
u_{f}\left(x,t\right):=\tilde{v}_{g}\left(\phi\left(x\right),t\right)\in C^{2,1}\left(\left(0,\infty\right)^{2}\right),\label{eq:u_f defined through v^tilde_g}
\end{equation}
is a solution to the original problem (\ref{eq:general IVP equation})
with initial data $f$. 
\end{lem}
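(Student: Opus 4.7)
The plan is to verify the lemma by a direct chain-rule computation, exploiting the fact that, by Condition 1, $\phi$ is a smooth strictly increasing bijection of $(0,\infty)$ with smooth inverse $\psi$. Hence the map $\tilde v_g \mapsto u_f$ defined by $u_f(x,t) = \tilde v_g(\phi(x),t)$ is a bijection of $C^{2,1}((0,\infty)^2)$ onto itself, and it is enough to check that the PDE, the initial data, and the Dirichlet boundary condition are all preserved.

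The first step is to record two key identities for $\phi$. From $\phi(x) = \tfrac14 \bigl(\int_0^x ds/\sqrt{a(s)}\bigr)^2$ one computes $\phi'(x) = \sqrt{\phi(x)}/\sqrt{a(x)}$, which immediately gives
\[
a(x)\,\phi'(x)^2 = \phi(x).
\]
Differentiating $\phi'$ once more and substituting into $a(x)\phi''(x) + b(x)\phi'(x)$, one recognizes the resulting combination as
\[
a(x)\phi''(x) + b(x)\phi'(x) = \tfrac12 + \frac{2b(x) - a'(x)}{2\sqrt{a(x)}}\sqrt{\phi(x)} = \nu + d(x) = \nu + \tilde d(\phi(x)),
\]
using the definition of $d$ and $\tilde d$ in $\mathsection1.2$.

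With these in hand, I would apply the chain rule to $u_f(x,t) = \tilde v_g(\phi(x),t)$: $\partial_t u_f = (\partial_t \tilde v_g)\circ\phi$, $\partial_x u_f = \phi'\,(\partial_z\tilde v_g)\circ\phi$, and $\partial_x^2 u_f = \phi''(\partial_z\tilde v_g)\circ\phi + (\phi')^2(\partial_z^2\tilde v_g)\circ\phi$. Substituting into $L = a\partial_x^2 + b\partial_x$ and regrouping with the two identities above yields
\[
L u_f(x,t) = \bigl[\,z\,\partial_z^2 \tilde v_g + (\nu + \tilde d(z))\,\partial_z \tilde v_g\,\bigr]\Big|_{z=\phi(x)},
\]
so that $(\partial_t - L)u_f = 0$ on $(0,\infty)^2$ if and only if $\tilde v_g$ satisfies the diffusion equation in (\ref{eq:model eq with drift}). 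Since $\phi$ is a diffeomorphism, the converse direction is obtained by inverting the substitution via $\psi$.

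It only remains to match the auxiliary data. The initial condition transfers since $\lim_{t\searrow 0} u_f(x,t) = g(\phi(x)) = f(\psi(\phi(x))) = f(x)$, and conversely. The Dirichlet boundary condition transfers because $\phi$ is a homeomorphism of $(0,\infty)$ with $\phi(0^+) = 0$, so $x\searrow 0$ and $z = \phi(x) \searrow 0$ are equivalent. I do not expect any genuine obstacle here: all of the substantive work was already done in $\mathsection1.2$ when $\phi$ and $d$ were chosen precisely so as to absorb $L$ into the model operator $L_\nu$ plus a drift $\tilde d$.
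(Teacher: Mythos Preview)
Your proposal is correct and is precisely the direct computation the paper has in mind: the paper omits the proof entirely, saying ``everything can be verified by direct computations,'' and your chain-rule verification of the two identities $a(x)\phi'(x)^2=\phi(x)$ and $a(x)\phi''(x)+b(x)\phi'(x)=\nu+\tilde d(\phi(x))$ is exactly that computation.
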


We will omit the proof since everything can be verified by direct
computations.

Given Lemma \ref{lem:change of variable}, our plan becomes clear
that, in order to solve (\ref{eq:general IVP equation}), we will
transform it to (\ref{eq:model eq with drift}) where the diffusion
coefficient degenerates linearly at 0 and the drift $\nu+\tilde{d}\left(z\right)$
is smooth on $\left(0,\infty\right)$ and is approximately $\nu$
near 0 since 
\[
\lim_{z\searrow0}\tilde{d}\left(z\right)=\lim_{x\searrow0}d\left(x\right)=0.
\]
Another advantage of (\ref{eq:model eq with drift}) is that, according
to (\ref{eq:conditions on b^tilde and z}), $\nu+\tilde{d}\left(z\right)$
is Lipschitz continuous on $\left(0,\infty\right)$, and hence the
Yamada-Watanabe theorem guarantees the existence of the almost surely
unique solution $\left\{ \tilde{Y}\left(z,t\right):\left(z,t\right)\in\left[0,\infty\right)^{2}\right\} $
to the equation
\begin{equation}
\tilde{Y}\left(z,t\right):=z+\int_{0}^{t}\sqrt{2\left|\tilde{Y}\left(z,s\right)\right|}dB\left(s\right)+\nu t+\int_{0}^{t}\tilde{d}\left(\tilde{Y}\left(z,s\right)\right)ds\text{ for }\left(z,t\right)\in\left[0,\infty\right)^{2}\label{eq:SDE satisfied by Y^tilde}
\end{equation}
with the constraint that $\tilde{Y}\left(z,t\right)\equiv0$ for every
$z\ge0$ and $t\geq\zeta_{0}^{\tilde{Y}}\left(z\right)$. Meanwhile,
if we define 
\begin{equation}
X\left(x,t\right):=\psi\left(\tilde{Y}\left(\phi\left(x\right),t\right)\right)\text{ for }\left(x,t\right)\in\left[0,\infty\right)^{2},\label{eq:transformation between X and Y}
\end{equation}
then one can follow Itô's formula to check that
\begin{align*}
X\left(x,t\right) & =x+\int_{0}^{t}\psi^{\prime}\left(\tilde{Y}\left(\phi\left(x\right),s\right)\right)\sqrt{2\left|\tilde{Y}\left(\phi\left(x\right),s\right)\right|}dB\left(s\right)\\
 & \hspace{1cm}+\int_{0}^{t}\left[\tilde{Y}\left(\phi\left(x\right),s\right)\psi^{\prime\prime}\left(\tilde{Y}\left(\phi\left(x\right),s\right)\right)+\psi^{\prime}\left(\tilde{Y}\left(\phi\left(x\right),s\right)\right)\left(\nu+\tilde{d}\left(\tilde{Y}\left(\phi\left(x\right),s\right)\right)\right)\right]ds\\
 & =x+\int_{0}^{t}\sqrt{2a\left(\psi\left(\tilde{Y}\left(\phi\left(x\right),s\right)\right)\right)}dB\left(s\right)+\int_{0}^{t}b\left(\psi\left(\tilde{Y}\left(\phi\left(x\right),s\right)\right)\right)ds\\
 & =x+\int_{0}^{t}\sqrt{2a\left(X\left(x,s\right)\right)}dB\left(s\right)+\int_{0}^{t}b\left(X\left(x,s\right)\right)ds.
\end{align*}
In other words, although the Yamada-Watanabe theorem does not apply
directly to the equation with $a\left(x\right)$ and $b\left(x\right)$,
we have managed to find a process $\left\{ X\left(x,t\right):\left(x,t\right)\in\left[0,\infty\right)^{2}\right\} $
that satisfies (\ref{eq: Ito integral equation for (i)}) and (\ref{eq:boundary constraint on X(x,t)}).
Since $\left\{ \tilde{Y}\left(z,t\right):\left(z,t\right)\in\left[0,\infty\right)^{2}\right\} $
is the almost surely unique solution to (\ref{eq:SDE satisfied by Y^tilde})
and $\phi:[0,\infty)\rightarrow[0,\infty)$ is a diffeomorphism, $\left\{ X\left(x,t\right):\left(x,t\right)\in\left[0,\infty\right)^{2}\right\} $
is also the almost surely unique solution to (\ref{eq: Ito integral equation for (i)})
and hence has the strong Markov property. 

We summarize the findings above in the following proposition. We will
omit the proof since it is exactly the same as that of Proposition
\ref{prop:uniqueness of v_g  and CK eq for q_nu}.
\begin{prop}
\label{prop:uniqueness of bar v_g  and u_f}Given $f\in C_{b}\left(\left(0,\infty\right)\right)$
and $g:=f\circ\psi$, if $\tilde{v}_{g}\left(z,t\right)\in C^{2,1}\left(\left(0,\infty\right)^{2}\right)$
is a solution to (\ref{eq:model eq with drift}) with initial data
$g$, then
\[
\begin{split}\tilde{v}_{g}\left(z,t\right) & =\mathbb{E}\left[g\left(\tilde{Y}\left(z,t\right)\right);t<\zeta_{0}^{\tilde{Y}}\left(z\right)\right]\end{split}
\text{ for every }\left(z,t\right)\in\left(0,\infty\right)^{2},
\]
and hence $\tilde{v}_{g}\left(z,t\right)$ is the unique solution
in $C^{2,1}\left(\left(0,\infty\right)^{2}\right)$ to (\ref{eq:model eq with drift}). 

Further, if $u_{f}\left(x,t\right)$ is defined as in (\ref{eq:u_f defined through v^tilde_g}),
then
\[
u_{f}\left(x,t\right)=\mathbb{E}\left[f\left(X\left(x,t\right)\right);t<\zeta_{0}^{X}\left(x\right)\right]\text{ for every }\left(x,t\right)\in\left(0,\infty\right)^{2},
\]
and hence $u_{f}\left(x,t\right)$ is the unique solution in $C^{2,1}\left(\left(0,\infty\right)^{2}\right)$
to (\ref{eq:general IVP equation}). 
\end{prop}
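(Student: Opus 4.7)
The plan is to mimic the martingale argument used in Proposition \ref{prop:uniqueness of v_g  and CK eq for q_nu}, with $Y(z,t)$ replaced by $\tilde{Y}(z,t)$. Let $\tilde{v}_g\in C^{2,1}((0,\infty)^2)$ be any solution to (\ref{eq:model eq with drift}) with initial data $g$. Since $g=f\circ\psi$ is bounded and $\tilde{v}_g$ satisfies the Dirichlet boundary condition, a standard maximum-principle argument (or a direct verification via the same martingale that we are about to construct) shows that $\tilde{v}_g$ is bounded on $(0,\infty)\times(0,T]$ for any $T>0$. Applying It\^o's formula to $s\mapsto \tilde{v}_g(\tilde{Y}(z,s),t-s)$ for $s\in[0,t]$, using the SDE (\ref{eq:SDE satisfied by Y^tilde}) together with the fact that the generator of $\tilde{Y}$ is precisely $L_\nu+\tilde{d}(z)\partial_z$, the drift contributions cancel against $\partial_t\tilde{v}_g$, leaving only a stochastic integral. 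Boundedness of $\tilde{v}_g$ and of the relevant partial derivatives on any set of the form $[\epsilon,M]\times[0,t]$ then promotes the resulting local martingale to a genuine martingale up to the stopping time $\zeta_0^{\tilde{Y}}(z)\wedge t$.

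Next I would invoke Doob's optional stopping theorem to conclude that
\[
\tilde{v}_g(z,t)=\mathbb{E}\bigl[\tilde{v}_g\bigl(\tilde{Y}(z,t\wedge\zeta_0^{\tilde{Y}}(z)),\,t-t\wedge\zeta_0^{\tilde{Y}}(z)\bigr)\bigr].
\]
Splitting the expectation according to whether $t<\zeta_0^{\tilde{Y}}(z)$ or not, the initial condition $\lim_{s\searrow 0}\tilde{v}_g(\cdot,s)=g$ handles the first event, while the Dirichlet condition $\lim_{z\searrow 0}\tilde{v}_g(z,\cdot)=0$ together with $\tilde{Y}(z,s)\equiv 0$ for $s\geq \zeta_0^{\tilde{Y}}(z)$ makes the contribution on $\{t\geq\zeta_0^{\tilde{Y}}(z)\}$ vanish. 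This yields the claimed representation $\tilde{v}_g(z,t)=\mathbb{E}[g(\tilde{Y}(z,t));\,t<\zeta_0^{\tilde{Y}}(z)]$, and uniqueness is then immediate because the right-hand side depends only on $g$ and on the almost-surely unique process $\tilde{Y}$.

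For the second assertion, I would not repeat the calculation but instead invoke Lemma \ref{lem:change of variable}: if $u_f\in C^{2,1}((0,\infty)^2)$ solves (\ref{eq:general IVP equation}) with initial data $f$, then $\tilde{v}_g(z,t):=u_f(\psi(z),t)$ is a $C^{2,1}$ solution to (\ref{eq:model eq with drift}) with initial data $g=f\circ\psi$, hence by the first part it coincides with $\mathbb{E}[g(\tilde{Y}(z,t));\,t<\zeta_0^{\tilde{Y}}(z)]$. Evaluating at $z=\phi(x)$ and using (\ref{eq:transformation between X and Y}) together with the fact that $\psi:[0,\infty)\to[0,\infty)$ is a diffeomorphism that sends $0$ to $0$ (so that $\zeta_0^X(x)=\zeta_0^{\tilde{Y}}(\phi(x))$ almost surely) gives the desired formula for $u_f$. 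Uniqueness of $u_f$ follows immediately.

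The only mild subtlety I anticipate is justifying that the It\^o stochastic integral is a true martingale rather than merely a local one, since $\tilde{v}_g$ and its $z$-derivatives need not be globally bounded on $(0,\infty)$. The standard workaround is to localise by a further stopping time $\zeta_0^{\tilde{Y}}(z)\wedge\tau_M$, where $\tau_M:=\inf\{s\geq 0:\tilde{Y}(z,s)\geq M\}$, carry out the argument there, and then let $M\nearrow\infty$; the non-explosion of $\tilde{Y}$ (a consequence of Condition~1 via the Feller test, or directly of the Yamada--Watanabe existence proof) ensures $\tau_M\to\infty$ almost surely, and boundedness of $g$ allows dominated convergence to pass to the limit.
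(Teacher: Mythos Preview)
Your proposal is correct and follows exactly the approach the paper intends: the paper omits the proof entirely, stating it is ``exactly the same as that of Proposition \ref{prop:uniqueness of v_g  and CK eq for q_nu}'', i.e., the It\^o/Doob martingale argument applied now to $\tilde{Y}$ in place of $Y$. Your write-up is in fact more careful than the paper's, since you spell out the localization needed to upgrade the local martingale to a true one and make explicit how Lemma \ref{lem:change of variable} and (\ref{eq:transformation between X and Y}) transfer the result from $\tilde{v}_g$ to $u_f$.
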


It should be clear that to proceed from here, we will treat (\ref{eq:model eq with drift})
as a perturbation of the model equation (\ref{eq:model equation}),
and study the fundamental solution to (\ref{eq:model eq with drift})
based on the results we have established on $q_{\nu}\left(z,w,t\right)$.
To achieve this goal, we will need another variation of (\ref{eq:model equation}).

\subsection{The model equation with a potential.}

To connect (\ref{eq:model eq with drift}) and (\ref{eq:model equation}),
we will first turn the extra drift $\tilde{d}\left(z\right)$ into
a potential, and seek to invoke the Duhamel perturbation method. To
this end, we define 
\[
\theta:\;z\in\left(0,\infty\right)\mapsto\theta\left(z\right):=\exp\left(-\int_{0}^{z}\frac{\tilde{d}(u)}{2u}du\right)\in\left(0,\infty\right),
\]
and $\theta$ is positive and smooth on $\left(0,\infty\right)$.
Further, if we define 
\[
V:\;z\in\left(0,\infty\right)\mapsto V\left(z\right):=-\frac{\tilde{d}^{2}\left(z\right)}{4z}-\frac{\tilde{d}^{\prime}\left(z\right)}{2}+\left(1-\nu\right)\frac{\tilde{d}\left(z\right)}{2z},
\]
then $V\left(z\right)$ is smooth and uniformly bounded on $\left(0,\infty\right)$,
according to (\ref{eq:conditions on b^tilde and z}). 
\begin{lem}
\label{lem:drift to potential transformation}Given $g\in C_{b}\left(\left(0,\infty\right)\right)$
and $h:=\frac{g}{\theta}$, $\tilde{v}_{g}\left(z,t\right)\in C^{2,1}\left(\left(0,\infty\right)^{2}\right)$
is a solution to (\ref{eq:model eq with drift}) with initial data
$g$ if and only if 
\[
v_{h}^{V}\left(z,t\right):=\frac{\tilde{v}_{g}\left(z,t\right)}{\theta\left(z\right)}\in C^{2,1}\left(\left(0,\infty\right)^{2}\right)
\]
is a solution to 
\begin{equation}
\begin{array}{c}
\begin{array}{c}
\partial_{t}v_{h}^{V}\left(z,t\right)=z\partial_{z}^{2}v_{h}^{V}\left(z,t\right)+\nu\partial_{z}v_{h}^{V}\left(z,t\right)+V\left(z\right)v_{h}^{V}\left(z,t\right)\text{ for }\left(z,t\right)\in\left(0,\infty\right)^{2},\\
\lim_{t\searrow0}v_{h}^{V}\left(z,t\right)=h\left(z\right)\text{ for }z\in\left(0,\infty\right)\text{ and }\lim_{z\searrow0}v_{h}^{V}\left(z,t\right)=0\text{ for }t\in\left(0,\infty\right).
\end{array}\end{array}\label{eq:model equation with potential}
\end{equation}
\end{lem}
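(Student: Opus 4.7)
The plan is to realize the correspondence as a gauge (multiplicative) transformation: I would verify by direct computation that the substitution $\tilde{v}_g = \theta\, v_h^V$ intertwines the two equations, and then separately check that the boundary behavior and the initial data translate correctly. Since the map $v \mapsto \theta v$ is invertible with smooth inverse on $(0,\infty)$, the same calculation gives both implications of the ``if and only if''.

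The concrete first step is to compute $\partial_t$, $\partial_z$ and $\partial_z^2$ of $\tilde{v}_g = \theta v_h^V$ by the product rule, substitute into the PDE in (\ref{eq:model eq with drift}), and divide through by $\theta$. The result is an equation for $v_h^V$ of the schematic form
$$\partial_t v_h^V = z\,\partial_z^2 v_h^V + \left(2z\,\tfrac{\theta'}{\theta} + \nu + \tilde{d}\right)\partial_z v_h^V + \left(z\,\tfrac{\theta''}{\theta} + (\nu+\tilde{d})\,\tfrac{\theta'}{\theta}\right)v_h^V.$$
The very definition of $\theta$ is engineered so that $\theta'/\theta = -\tilde{d}/(2z)$, which cancels the extra $\tilde{d}$ in the first-order coefficient and reduces it to $\nu$. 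Substituting this identity together with $\theta''/\theta = (\theta'/\theta)' + (\theta'/\theta)^2$ into the zeroth-order coefficient and simplifying should reproduce exactly the potential $V(z) = -\tilde{d}^2(z)/(4z) - \tilde{d}'(z)/2 + (1-\nu)\tilde{d}(z)/(2z)$ in the statement. The reverse direction is the same computation read in the opposite order.

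Next I would verify that $\theta$ behaves well enough for the transformation to preserve initial/boundary data. Condition 3, namely (\ref{eq:conditions on b^tilde and z}), gives $|\tilde{d}(z)| \lesssim \sqrt{z}$ near $0$, hence $|\tilde{d}(u)/(2u)| \lesssim 1/\sqrt{u}$ is integrable at $0$, so $\theta$ extends to a positive finite limit at $z=0$ and is smooth and strictly positive on $[0,\infty)$. Consequently, multiplication or division by $\theta$ preserves smoothness on $(0,\infty)^2$ and preserves the Dirichlet condition at $z=0$; the initial conditions match tautologically via $h = g/\theta$.

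I do not anticipate a real obstacle: the argument is essentially bookkeeping with the chain and product rules. The only point requiring a brief check is that the resulting $V(z)$ is smooth and uniformly bounded on $(0,\infty)$ so that (\ref{eq:model equation with potential}) is a genuine perturbation of the model equation, and this is immediate from the two uniform bounds in (\ref{eq:conditions on b^tilde and z}) together with the smoothness of $\tilde{d}$.
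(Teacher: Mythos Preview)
Your proposal is correct and is exactly the approach the paper has in mind: the authors omit the proof entirely, stating only that it is straightforward, and your direct product-rule computation with $\theta'/\theta=-\tilde d/(2z)$ is precisely the intended verification. One small remark: you do not need smoothness of $\theta$ up to $z=0$; since $\theta(z)=\exp\!\big(-\int_0^z \tilde d(u)/(2u)\,du\big)$ by definition satisfies $\theta(0)=1$, continuity and positivity at the boundary are automatic, which is all that is required for the Dirichlet condition and the initial data to transfer.
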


Again, we will omit the proof to Lemma \ref{lem:drift to potential transformation}
since it is straightforward.

Following the method of Duhamel, in order to solve (\ref{eq:model equation with potential}),
we need to find a function $q_{\nu}^{V}\left(z,w,t\right)$ that solves
the integral equation 
\begin{equation}
q_{\nu}^{V}\left(z,w,t\right)=q_{\nu}\left(z,w,t\right)+\int_{0}^{t}\int_{0}^{\infty}q_{\nu}\left(z,\xi,t-\tau\right)q_{\nu}^{V}\left(\xi,w,\tau\right)V\left(\xi\right)d\xi d\tau.\label{eq: duhamel integral eq}
\end{equation}
To this end, for every $\left(z,w,t\right)\in\left(0,\infty\right)^{3}$,
we set $q_{\nu,0}\left(z,w,t\right):=q_{\nu}\left(z,w,t\right)$ and
recursively define
\begin{equation}
q_{\nu,n+1}\left(z,w,t\right):=\int_{0}^{t}\int_{0}^{\infty}q_{\nu}\left(z,\xi,t-\tau\right)q_{\nu,n}\left(\xi,w,\tau\right)V\left(\xi\right)d\xi d\tau\text{ for }n\geq0.\label{eq:recursion n->n+1}
\end{equation}
\begin{lem}
\label{lem:def of q^V_=00005Cnu} For every $\left(z,w,t\right)\in\left(0,\infty\right)^{3}$,
\begin{equation}
q_{\nu}^{V}\left(z,w,t\right):=\sum_{n=0}^{\infty}q_{\nu,n}\left(z,w,t\right)\label{eq:def of q^V_nu}
\end{equation}
is well defined as an absolutely convergent series,
\begin{equation}
\left|q_{\nu}^{V}\left(z,w,t\right)\right|\leq e^{t\left\Vert V\right\Vert _{u}}q_{\nu}\left(z,w,t\right)\label{eq:exp estimate for q^V_=00005Cnu}
\end{equation}
and 
\begin{equation}
\sup_{z,w\in\left(0,\infty\right)^{2}}\left|\frac{q_{\nu}^{V}\left(z,w,t\right)}{q_{\nu}\left(z,w,t\right)}-1\right|\leq e^{t\left\Vert V\right\Vert _{u}}-1.\label{eq: estimate of q^V_nu/q_nu}
\end{equation}
Moreover, $q_{\nu}^{V}\left(z,w,t\right)$ satisfies the integral
equation (\ref{eq: duhamel integral eq}). 
\end{lem}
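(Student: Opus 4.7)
\medskip

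The plan is to establish the bound $|q_{\nu,n}(z,w,t)| \le \frac{(t\|V\|_u)^n}{n!} q_\nu(z,w,t)$ by induction on $n$, which immediately yields both the absolute convergence of the series defining $q_\nu^V$ and the estimates (\ref{eq:exp estimate for q^V_=00005Cnu})--(\ref{eq: estimate of q^V_nu/q_nu}). The engine driving the induction is the Chapman-Kolmogorov equation (\ref{eq:CK equation for q_nu}) for $q_\nu$, which was established in Proposition \ref{prop:uniqueness of v_g  and CK eq for q_nu}; this converts the time-convolution appearing in the recursion (\ref{eq:recursion n->n+1}) back into $q_\nu$ itself, up to a factor of $\|V\|_u$ from pulling $V(\xi)$ out in absolute value.

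The base case $n=0$ is trivial. For the inductive step, assuming the bound for $n$, I estimate
\[
|q_{\nu,n+1}(z,w,t)| \le \|V\|_u \int_0^t \int_0^\infty q_\nu(z,\xi,t-\tau) \cdot \frac{(\tau \|V\|_u)^n}{n!}\, q_\nu(\xi,w,\tau)\, d\xi\, d\tau,
\]
and then apply (\ref{eq:CK equation for q_nu}) to collapse the $\xi$-integral to $q_\nu(z,w,t)$. The remaining $\tau$-integral is $\int_0^t \frac{(\tau\|V\|_u)^n}{n!}\, d\tau = \frac{(t\|V\|_u)^{n+1}}{(n+1)!\,\|V\|_u}$, which delivers the desired inequality at level $n+1$. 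Summing in $n$ yields (\ref{eq:exp estimate for q^V_=00005Cnu}), and since $q_{\nu,0} = q_\nu$, the difference $q_\nu^V/q_\nu - 1$ is bounded in absolute value by $\sum_{n\ge 1} (t\|V\|_u)^n/n! = e^{t\|V\|_u} - 1$, giving (\ref{eq: estimate of q^V_nu/q_nu}).

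For the integral equation (\ref{eq: duhamel integral eq}), the idea is to unfold the series: by the definition (\ref{eq:recursion n->n+1}),
\[
\sum_{n=0}^{\infty} q_{\nu,n+1}(z,w,t) = \int_0^t \int_0^\infty q_\nu(z,\xi,t-\tau) \Bigl(\sum_{n=0}^{\infty} q_{\nu,n}(\xi,w,\tau)\Bigr) V(\xi)\, d\xi\, d\tau,
\]
where the interchange of sum and integral is justified by the uniform-in-$n$ bound already obtained together with the integrability of $q_\nu(z,\xi,t-\tau) q_\nu(\xi,w,\tau)$ against $d\xi\, d\tau$ coming from Chapman-Kolmogorov. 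The left side equals $q_\nu^V(z,w,t) - q_{\nu,0}(z,w,t) = q_\nu^V(z,w,t) - q_\nu(z,w,t)$, and the right side is exactly the convolution term in (\ref{eq: duhamel integral eq}). The main (though modest) technical obstacle is the Fubini/dominated-convergence bookkeeping in this last step; the uniform bound $(t\|V\|_u)^n/n!$ makes the geometric-style majorant $e^{t\|V\|_u} q_\nu$ available on the whole product $(0,\infty)^2 \times (0,t)$, so all exchanges go through once (\ref{eq:CK equation for q_nu}) is invoked.
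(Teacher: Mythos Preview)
Your proof is correct and follows essentially the same approach as the paper: both establish the bound $|q_{\nu,n}(z,w,t)| \le \frac{(t\|V\|_u)^n}{n!} q_\nu(z,w,t)$ by induction using the Chapman--Kolmogorov equation (\ref{eq:CK equation for q_nu}) to collapse the $\xi$-integral in the recursion, then sum the resulting exponential series to obtain (\ref{eq:exp estimate for q^V_=00005Cnu}) and (\ref{eq: estimate of q^V_nu/q_nu}), and finally justify (\ref{eq: duhamel integral eq}) by plugging the series in and invoking the uniform majorant. Your write-up is in fact somewhat more explicit than the paper's on the Fubini bookkeeping, but the argument is the same.
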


\begin{proof}
By (\ref{eq:CK equation for q_nu}), (\ref{eq:recursion n->n+1})
and a simple application of induction, we can check that for every
$n\in\mathbb{N}$,
\begin{equation}
\left|q_{\nu,n}\left(z,w,t\right)\right|\leq\frac{\left(t\left\Vert V\right\Vert _{u}\right)^{n}}{n!}q_{\nu}\left(z,w,t\right)\text{ for every }\left(z,w,t\right)\in\left(0,\infty\right)^{3}.\label{eq:estimate for q_nu,n}
\end{equation}
Therefore, the series $q_{\nu}^{V}\left(z,w,t\right):=\sum_{n=0}^{\infty}q_{\nu,n}\left(z,w,t\right)$
is absolutely convergent and

\[
\sum_{n=0}^{\infty}\left|q_{\nu,n}\left(z,w,t\right)\right|\leq e^{t\left\Vert V\right\Vert _{u}}q_{\nu}\left(z,w,t\right),
\]
which gives (\ref{eq:exp estimate for q^V_=00005Cnu}) and (\ref{eq: estimate of q^V_nu/q_nu}).
(\ref{eq:estimate for q_nu,n}) also guarantees that one can plug
the series in (\ref{eq:def of q^V_nu}) into both sides of (\ref{eq: duhamel integral eq})
to verify its validity.
\end{proof}
Certainly the estimate (\ref{eq: estimate of q^V_nu/q_nu}) is more
meaningful when $t$ is small, in which case the effect of the potential
$V\left(z\right)$ has not become substantial and we do expect that
$q_{\nu}^{V}\left(z,w,t\right)$ is close to $q_{\nu}\left(z,w,t\right)$. 
\begin{prop}
\label{prop:results on model eq with potential}Given a function $h:\left(0,\infty\right)\rightarrow\mathbb{R}$
such that $h\cdot\theta\in C_{b}\left(\left(0,\infty\right)\right)$,
if we define
\begin{equation}
v_{h}^{V}\left(z,t\right):=\int_{0}^{\infty}q_{\nu}^{V}\left(z,w,t\right)h\left(w\right)dw\text{ for }\left(z,t\right)\in\left(0,\infty\right)^{2},\label{eq:def of v^V_h}
\end{equation}
then $v_{h}^{V}\left(z,t\right)$ is a smooth solution to (\ref{eq:model equation with potential}). 

Recall that $\left\{ Y\left(z,t\right):\left(z,t\right)\in[0,\infty)^{2}\right\} $
is the unique solution to (\ref{eq:SDE satisfied by Y(z,t)}). Then,
\begin{equation}
v_{h}^{V}\left(z,t\right)=\mathbb{E}\left[e^{\int_{0}^{t}V\left(Y\left(z,\tau\right)\right)d\tau}h\left(Y\left(z,t\right)\right);t<\zeta_{0}^{Y}\left(z\right)\right]\text{ for every }\left(z,t\right)\in\left(0,\infty\right)^{2},\label{eq:prob interpretation of q^V_nu}
\end{equation}
and hence $v_{h}^{V}\left(z,t\right)$ is the unique solution in $C^{2,1}\left(\left(0,\infty\right)^{2}\right)$
to (\ref{eq:model equation with potential}). 
\end{prop}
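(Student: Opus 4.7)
The plan is to mirror the structure of the proofs of Propositions \ref{prop: q_nu is the fundamental solution} and \ref{prop:uniqueness of v_g  and CK eq for q_nu}, with the extra potential $V$ handled via the Duhamel series (\ref{eq:def of q^V_nu}). First I would establish that $v_{h}^{V}(z,t)$ is well-defined and smooth. Since $h\cdot\theta$ is bounded and $\theta$ is smooth on $(0,\infty)$, $h$ is locally bounded; together with the estimate (\ref{eq:exp estimate for q^V_=00005Cnu}) and (\ref{eq:total mass of q_nu}), the integral in (\ref{eq:def of v^V_h}) converges absolutely. To verify the PDE in (\ref{eq:model equation with potential}), I would test against a Schwartz function $\varphi$ on $(0,\infty)$, exactly as in the proof of Proposition \ref{prop: q_nu is the fundamental solution}, using now the Duhamel integral equation (\ref{eq: duhamel integral eq}) rather than the bare backward equation; differentiating $\langle\varphi,v_{h}^{V}(\cdot,t)\rangle$ in $t$, integrating by parts, and using (\ref{eq: q_nu solves backward eq}), I would obtain $\frac{d}{dt}\langle\varphi,v_{h}^{V}(\cdot,t)\rangle=\langle L_{\nu}^{*}\varphi+V\varphi,v_{h}^{V}(\cdot,t)\rangle$. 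Hypoellipticity of $\partial_{t}-L_{\nu}-V$ (since $V$ is smooth and of order zero) then upgrades $v_{h}^{V}$ to a classical smooth solution.

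For the initial and boundary data, the bound $|q_{\nu,n}(z,w,t)|\le\frac{(t\|V\|_{u})^{n}}{n!}q_{\nu}(z,w,t)$ from (\ref{eq:estimate for q_nu,n}) shows that $\sum_{n\ge1}\int_{0}^{\infty}|q_{\nu,n}(z,w,t)h(w)|\,dw\le(e^{t\|V\|_{u}}-1)\sup|h|\cdot\int_{0}^{\infty}q_{\nu}(z,w,t)\,dw$, which vanishes as $t\searrow0$ uniformly in $z$ on compact subsets of $(0,\infty)$ by (\ref{eq:total mass of q_nu}). Hence $\lim_{t\searrow0}v_{h}^{V}(z,t)=\lim_{t\searrow0}\int_{0}^{\infty}q_{\nu}(z,w,t)h(w)\,dw=h(z)$ by Proposition \ref{prop: q_nu is the fundamental solution}. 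For the Dirichlet condition, a quick induction on $n$ using the recursion (\ref{eq:recursion n->n+1}) and $\lim_{z\searrow0}q_{\nu}(z,\xi,\cdot)=0$ shows $\lim_{z\searrow0}q_{\nu,n}(z,w,t)=0$ for every $n$; dominated convergence with the bound (\ref{eq:exp estimate for q^V_=00005Cnu}) then yields $\lim_{z\searrow0}v_{h}^{V}(z,t)=0$.

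Next I would prove the Feynman--Kac representation (\ref{eq:prob interpretation of q^V_nu}) by applying It\^o's formula to
\[
N_{s}:=\exp\!\left(\int_{0}^{s}V(Y(z,\tau))\,d\tau\right)v_{h}^{V}(Y(z,s),t-s),\qquad s\in[0,t].
\]
Since $v_{h}^{V}$ is smooth on $(0,\infty)^{2}$ and solves the PDE in (\ref{eq:model equation with potential}), the bounded-variation part of $dN_{s}$ equals $N_{s}[V(Y)-\partial_{t}v_{h}^{V}+L_{\nu}v_{h}^{V}+V\cdot v_{h}^{V}]/v_{h}^{V}\cdot ds=0$, so $N_{s}$ is a local martingale. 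Stopping at $\zeta_{0}^{Y}(z)$ via Doob's theorem and using the boundedness of $V$ and of $v_{h}^{V}$ (inherited from the bound (\ref{eq:exp estimate for q^V_=00005Cnu}) and boundedness of $h$ on compacts --- one may first argue for $h\in C_{c}((0,\infty))$ and pass to the limit by monotone/dominated convergence), I obtain a true martingale. Equating $\mathbb{E}[N_{0}]=v_{h}^{V}(z,t)$ with $\mathbb{E}[N_{t\wedge\zeta_{0}^{Y}(z)}]$ and using the Dirichlet condition $v_{h}^{V}(0,\cdot)=0$ to kill the contribution on $\{t\ge\zeta_{0}^{Y}(z)\}$ yields (\ref{eq:prob interpretation of q^V_nu}). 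Finally, since $\{Y(z,t)\}$ is almost surely unique by Yamada--Watanabe, formula (\ref{eq:prob interpretation of q^V_nu}) determines any $C^{2,1}$ solution to (\ref{eq:model equation with potential}) uniquely.

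I expect the main obstacle to be the It\^o/martingale step: one must ensure that the local martingale is genuinely a martingale all the way up to the stopping time $t\wedge\zeta_{0}^{Y}(z)$, and that the boundary term at $\zeta_{0}^{Y}$ vanishes --- this requires a careful combination of the Dirichlet condition at $0$ for $v_{h}^{V}$ (just established), uniform boundedness via (\ref{eq:exp estimate for q^V_=00005Cnu}), and possibly a localisation/truncation argument on $h$. The distributional-to-classical upgrade, while standard given the hypoellipticity of $\partial_{t}-L_{\nu}-V$, also requires noting that adding a smooth zeroth-order term preserves hypoellipticity.
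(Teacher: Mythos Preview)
Your approach is essentially identical to the paper's: Duhamel relation plus hypoellipticity for the PDE, the bound (\ref{eq:exp estimate for q^V_=00005Cnu}) for the initial and boundary data, and the It\^o/Doob martingale argument for the Feynman--Kac identity. One correction: you repeatedly invoke ``$\sup|h|$'' or ``$h$ locally bounded,'' but $h=g/\theta$ need not be bounded on $(0,\infty)$; the paper instead observes (from $h\theta\in C_b$ and Condition~3) that $|h(z)|\le C'e^{C\sqrt{z}}$ and pairs this with the Gaussian-type decay (\ref{eq: estimate of q_nu}) to control the tail of the integral---you should do the same in your convergence and initial-data steps.
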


\begin{proof}
Let $h$ be a continuous function such that $h\cdot\theta$ is bounded
on $\left(0,\infty\right)$. Then, according to Condition 3, there
exist $C,C^{\prime}>0$ such that
\begin{equation}
\left|h\left(z\right)\right|\leq C^{\prime}e^{C\sqrt{z}}\text{ for every }z>0.\label{eq:estimate on initial h}
\end{equation}
Following exactly the same proof as that of Proposition \ref{prop: q_nu is the fundamental solution},
we can get that $v_{h}\left(z,t\right):=\int_{0}^{\infty}q_{\nu}\left(z,w,t\right)h\left(w\right)dw$
is a smooth solution to (\ref{eq:model equation}) with initial data
$h$. Then, (\ref{eq: duhamel integral eq}) implies that $v_{h}^{V}\left(z,t\right)$
and $v_{h}\left(z,t\right)$ have the relation that, for every $\left(z,t\right)\in\left(0,\infty\right)^{2}$,
\begin{equation}
\begin{split}v_{h}^{V}\left(z,t\right) & =v_{h}\left(z,t\right)+\end{split}
\int_{0}^{t}\int_{0}^{\infty}q_{\nu}\left(z,\xi,t-\tau\right)v_{h}^{V}\left(\xi,\tau\right)V\left(\xi\right)d\xi d\tau.\label{eq:relation between v_h  and v_h^V}
\end{equation}
It is easy to see from (\ref{eq:def of v^V_h}) that $\lim_{z\searrow0}v_{h}^{V}\left(z,t\right)=0$
for every $t>0$. We also observe that, by (\ref{eq: estimate of q^V_nu/q_nu}),
\[
\begin{split}\left|q_{\nu}^{V}\left(z,w,t\right)-q_{\nu}\left(z,w,t\right)\right| & \leq\left(e^{t\left\Vert V\right\Vert _{u}}-1\right)q_{\nu}\left(z,w,t\right)\text{ for every }\end{split}
\left(z,w,t\right)\in\left(0,\infty\right)^{3}.
\]
Thus, (\ref{eq: estimate of q_nu}) and (\ref{eq:estimate on initial h})
imply that
\[
\begin{split}\left|v_{h}^{V}\left(z,t\right)-v_{h}\left(z,t\right)\right| & \leq C^{\prime}\left(e^{t\left\Vert V\right\Vert _{u}}-1\right)\int_{0}^{\infty}q_{\nu}\left(z,w,t\right)e^{C\sqrt{w}}dw\\
 & \rightarrow0\text{ as }t\searrow0\text{ for every }z>0,
\end{split}
\]
which leads to 
\[
\lim_{t\searrow0}v_{h}^{V}\left(z,t\right)=\lim_{t\searrow0}v_{h}\left(z,t\right)=h\left(z\right)\text{ for every }z>0.
\]

Therefore, to prove the first statement of Proposition \ref{prop:results on model eq with potential},
the only thing left to do is to show that $v_{h}^{V}\left(z,t\right)$
is a smooth solution to the equation in (\ref{eq:model equation with potential}),
for which we will apply the hypoellipticity theory again. Given a
Schwartz function $\varphi$ on $\left(0,\infty\right)$, we consider
\[
\left\langle \varphi,v_{h}^{V}\left(\cdot,t\right)\right\rangle :=\int_{0}^{\infty}v_{h}^{V}\left(z,t\right)\varphi\left(z\right)dz\text{ for }t>0
\]
and use (\ref{eq:relation between v_h  and v_h^V}) to write it as
\[
\left\langle \varphi,v_{h}^{V}\left(\cdot,t\right)\right\rangle =\left\langle \varphi,v_{h}\left(\cdot,t\right)\right\rangle +\int_{0}^{t}\int_{0}^{\infty}\left\langle \varphi,q_{\nu}\left(\cdot,\xi,t-\tau\right)\right\rangle v_{h}^{V}\left(\xi,\tau\right)V\left(\xi\right)d\xi d\tau.
\]
Taking the derivative in $t$ of the equation above results in  
\[
\begin{split}\frac{d}{dt}\left\langle \varphi,v_{h}^{V}\left(\cdot,t\right)\right\rangle  & =\frac{d}{dt}\left\langle \varphi,v_{h}\left(\cdot,t\right)\right\rangle +\left\langle V\varphi,v_{h}^{V}\left(\cdot,t\right)\right\rangle \\
 & \qquad+\int_{0}^{t}\int_{0}^{\infty}\partial_{t}\left\langle \varphi,q_{\nu}\left(\cdot,\xi,t-\tau\right)\right\rangle v_{h}^{V}\left(\xi,\tau\right)V\left(\xi\right)d\xi d\tau\\
 & =\left\langle L_{\nu}^{*}\varphi,v_{h}\left(\cdot,t\right)\right\rangle +\left\langle V\varphi,v_{h}^{V}\left(\cdot,t\right)\right\rangle \\
 & \qquad+\int_{0}^{t}\int_{0}^{\infty}\left\langle L_{\nu}^{*}\varphi,q_{\nu}\left(\cdot,\xi,t-\tau\right)\right\rangle v_{h}^{V}\left(\xi,\tau\right)V\left(\xi\right)d\xi d\tau\\
 & =\left\langle \left(L_{\nu}^{*}+V\right)\varphi,v_{h}^{V}\left(\cdot,t\right)\right\rangle ,
\end{split}
\]
Therefore, $v_{h}^{V}\left(z,t\right)$ solves (\ref{eq:model equation with potential})
in the sense of tempered distributions. Since $\partial_{t}-z\partial_{z}^{2}-\nu\partial_{z}-V$
on $\left(0,\infty\right)$ is hypoelliptic ($\mathsection7.4$ of
\cite{PDEStroock}), we get that $v_{h}^{V}\left(z,t\right)$ is a
smooth solution to (\ref{eq:model equation with potential}). 

Next we get down to proving (\ref{eq:prob interpretation of q^V_nu}),
which is very similar to the proof of (\ref{eq:prob interpretation of q_nu}).
For every $\left(z,t\right)\in\left(0,\infty\right)^{2}$, by Itô's
formula and Doob's stopping time theorem 
\[
\left\{ e^{\int_{0}^{s\wedge\zeta_{0}^{Y}\left(z\right)}V\left(Y\left(z,\tau\right)\right)d\tau}v_{h}^{V}\left(Y\left(z,s\wedge\zeta_{0}^{Y}\left(z\right)\right),t-s\wedge\zeta_{0}^{Y}\left(z\right)\right):s\in\left[0,t\right]\right\} 
\]
is a martingale. Equating the expectation of the martingale at $0$
and $t$ leads to 
\[
\begin{split}v_{h}^{V}\left(z,t\right) & =\mathbb{E}\left[e^{\int_{0}^{t\wedge\zeta_{0}^{Y}\left(z\right)}V\left(Y\left(z,\tau\right)\right)d\tau}v_{h}^{V}\left(Y\left(z,t\wedge\zeta_{0}^{Y}\left(z\right)\right),t-t\wedge\zeta_{0}^{Y}\left(z\right)\right)\right]\\
 & =\mathbb{E}\left[e^{\int_{0}^{t}V\left(Y\left(z,\tau\right)\right)d\tau}h\left(Y\left(z,t\right)\right);t<\zeta_{0}^{Y}\left(z\right)\right].
\end{split}
\]
\end{proof}
We summarize the properties of $q_{\nu}^{V}\left(z,w,t\right)$ in
the next proposition.
\begin{prop}
\label{prop:results on q^V_nu}Let $q_{\nu}^{V}\left(z,w,t\right)$
be defined as in (\ref{eq:def of q^V_nu}). Then, for every $\left(z,w,t\right)\in\left(0,\infty\right)^{3}$,

\begin{equation}
w^{1-\nu}q_{\nu}^{V}\left(z,w,t\right)=z^{1-\nu}q_{\nu}^{V}\left(w,z,t\right),\label{eq:symmetry of q^V_nu}
\end{equation}
and $q_{\nu}^{V}\left(z,w,t\right)$ also satisfies the following
integral equation:
\begin{equation}
q_{\nu}^{V}\left(z,w,t\right)=q_{\nu}\left(z,w,t\right)+\int_{0}^{t}\int_{0}^{\infty}q_{\nu}^{V}\left(z,\xi,t-s\right)q_{\nu}\left(\xi,w,s\right)V\left(\xi\right)d\xi ds.\label{eq: duhamel integral equiv}
\end{equation}

Besides, for every $w>0$, $\left(z,t\right)\mapsto q_{\nu}^{V}\left(z,w,t\right)$
is a smooth solution to the equation in (\ref{eq:model equation with potential}),
and for every $z>0$, $\left(w,t\right)\mapsto q_{\nu}^{V}\left(z,w,t\right)$
is a smooth solution to the corresponding Kolmogorov forward equation.
Moreover, $q_{\nu}^{V}\left(z,w,t\right)$ is the fundamental solution
to (\ref{eq:model equation with potential}). 

Finally, $q_{\nu}^{V}\left(z,w,t\right)$ satisfies the Chapman-Kolmogorov
equation, i.e., for $z,w>0$ and $t,s>0$,
\begin{equation}
q_{\nu}^{V}\left(z,w,t+s\right)=\int_{0}^{\infty}q_{\nu}^{V}\left(z,\xi,t\right)q_{\nu}^{V}\left(\xi,w,s\right)d\xi.\label{eq:CK for q^V_nu}
\end{equation}
\end{prop}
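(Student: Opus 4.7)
The plan is to address the five assertions in an order that recycles the machinery already in place: first the symmetry, then the alternative integral equation, then the two PDEs, then the kernel property, and finally Chapman--Kolmogorov.

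For the symmetry (\ref{eq:symmetry of q^V_nu}), I would argue by induction on $n$ that
\[
w^{1-\nu}q_{\nu,n}(z,w,t)=z^{1-\nu}q_{\nu,n}(w,z,t).
\]
The base case $n=0$ is (\ref{eq:symmetry of q_nu}). For the inductive step, multiply (\ref{eq:recursion n->n+1}) by $w^{1-\nu}$, apply (\ref{eq:symmetry of q_nu}) to $q_{\nu}(z,\xi,t-\tau)$ to pull a factor $\xi^{1-\nu}/z^{1-\nu}$, and apply the inductive hypothesis to $q_{\nu,n}(\xi,w,\tau)$ to cancel the $\xi^{1-\nu}$ and produce $w^{1-\nu}$ on the right. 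The uniform bound (\ref{eq:estimate for q_nu,n}) justifies passing the symmetry through the series (\ref{eq:def of q^V_nu}). For (\ref{eq: duhamel integral equiv}), the cleanest route is to introduce the reversed iterates
\[
\widetilde{q}_{\nu,0}:=q_{\nu},\qquad \widetilde{q}_{\nu,n+1}(z,w,t):=\int_{0}^{t}\!\!\int_{0}^{\infty}\widetilde{q}_{\nu,n}(z,\xi,t-s)\,q_{\nu}(\xi,w,s)V(\xi)\,d\xi\,ds,
\]
and prove by induction on $n$ that $\widetilde{q}_{\nu,n}=q_{\nu,n}$. This reduces to the associativity of the time-ordered convolution of $q_{\nu}$ with itself, i.e.\ a repeated application of Fubini plus the Chapman--Kolmogorov identity (\ref{eq:CK equation for q_nu}) for the unperturbed kernel. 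Summing then yields (\ref{eq: duhamel integral equiv}).

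For the PDE claims, I observe that the proof of Proposition \ref{prop:results on model eq with potential} essentially shows that $q_{\nu}^V(z,w,t)$ itself is a smooth solution of the backward equation in $(z,t)$ for each fixed $w$: the bound (\ref{eq:estimate for q_nu,n}) justifies termwise differentiation of (\ref{eq:def of q^V_nu}) under $\partial_t-L_\nu$, and the Duhamel equation (\ref{eq: duhamel integral eq}) gives exactly $(\partial_t-L_\nu)q_\nu^V=Vq_\nu^V$ in the distributional sense, after which hypoellipticity upgrades this to smoothness. The corresponding forward equation in $(w,t)$ then follows from the symmetry (\ref{eq:symmetry of q^V_nu}): writing $q_{\nu}^V(z,w,t)=(z/w)^{1-\nu}q_{\nu}^V(w,z,t)$ and differentiating, a direct computation converts the backward operator $L_\nu+V$ acting in the first variable into the adjoint $L_\nu^{*}+V=w\partial_w^{2}+(2-\nu)\partial_w+V$ acting in the second variable, exactly as in the unperturbed case (\ref{eq: q_nu solves forward eq}).

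That $q_{\nu}^V$ is the fundamental solution to (\ref{eq:model equation with potential}) then follows from Proposition \ref{prop:results on model eq with potential} applied to $v_h^V$: for any $h$ with $h\theta\in C_b((0,\infty))$, formula (\ref{eq:def of v^V_h}) together with the verified initial and boundary conditions identifies $q_{\nu}^V(z,\cdot,t)$ as the kernel. Finally, for Chapman--Kolmogorov (\ref{eq:CK for q^V_nu}), I would invoke the probabilistic representation (\ref{eq:prob interpretation of q^V_nu}) for an arbitrary test function $h=\mathbf{1}_{\Gamma}$ (or continuous bounded $h$), split the multiplicative functional at time $t$ via
\[
\exp\!\int_{0}^{t+s}V(Y(z,\tau))d\tau=\exp\!\int_{0}^{t}V(Y(z,\tau))d\tau\cdot\exp\!\int_{t}^{t+s}V(Y(z,\tau))d\tau,
\]
and condition on $\mathcal{F}_t$. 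The strong Markov property of $Y$ (guaranteed by Yamada--Watanabe uniqueness, see the discussion preceding Proposition \ref{prop:uniqueness of v_g  and CK eq for q_nu}), combined with the absorbing constraint (\ref{eq:boundary conditon on Y(z,t)}), gives
\[
v_h^V(z,t+s)=\mathbb{E}\!\left[e^{\int_0^t V(Y(z,\tau))d\tau}v_h^V(Y(z,t),s);\,t<\zeta_0^Y(z)\right],
\]
which after unwinding through (\ref{eq:def of v^V_h}) yields (\ref{eq:CK for q^V_nu}) by the arbitrariness of $h$. The main obstacle I anticipate is the bookkeeping in the alternative integral equation (\ref{eq: duhamel integral equiv}), since the series defining $q_{\nu}^V$ privileges one order of convolution and the symmetrization of iterated time-ordered integrals must be done carefully; everything else is a reuse of arguments already deployed for $q_\nu$.
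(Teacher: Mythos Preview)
Your proposal is essentially the paper's proof, with the same five ingredients: termwise symmetry, the reversed iterates $\tilde q_{\nu,n}$, hypoellipticity for the backward equation, symmetry to transfer to the forward equation, and the Feynman--Kac/Markov argument for Chapman--Kolmogorov. One point to tighten: your inductive step for the symmetry does not close on its own. After applying the inductive hypothesis to $q_{\nu,n}(\xi,w,\tau)$ and the base symmetry to $q_\nu(z,\xi,t-\tau)$, you land on
\[
z^{1-\nu}\int_0^t\!\!\int_0^\infty q_{\nu,n}(w,\xi,t-\tau)\,q_\nu(\xi,z,\tau)\,V(\xi)\,d\xi\,d\tau,
\]
which is $z^{1-\nu}\tilde q_{\nu,n+1}(w,z,t)$ in the reversed recursion, not $z^{1-\nu}q_{\nu,n+1}(w,z,t)$. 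So you need the identity $\tilde q_{\nu,n}=q_{\nu,n}$ \emph{before} the symmetry induction, not after; the paper establishes the two-recursion equivalence first for exactly this reason. Once you reorder those two steps, your argument and the paper's coincide.
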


\begin{proof}
To prove (\ref{eq:symmetry of q^V_nu}), we first note that if we
define $\tilde{q}_{\nu,0}\left(z,w,t\right):=q_{\nu}\left(z,w,t\right)$
and for every $n\geq0$, 
\begin{equation}
\tilde{q}_{\nu,n+1}\left(z,w,t\right):=\int_{0}^{t}\int_{0}^{\infty}\tilde{q}_{\nu,n}\left(z,\xi,t-\tau\right)q_{\nu}\left(\xi,w,\tau\right)V\left(\xi\right)d\xi d\tau,\label{eq:alternative of n->n+1}
\end{equation}
then $\tilde{q}_{\nu,n}\left(z,w,t\right)=q_{\nu,n}\left(z,w,t\right)$
for every $n\in\mathbb{N}$. In other words, (\ref{eq:alternative of n->n+1})
is an equivalent recursive relation to (\ref{eq:recursion n->n+1}).
To see this, one can expand both the right hand side of (\ref{eq:recursion n->n+1})
and that of (\ref{eq:alternative of n->n+1}) into two respective
$2n-$fold integrals, and observe that the two integrals are identical.
Next, we will show by induction that for every $n\geq0$,
\[
w^{1-\nu}q_{\nu,n}\left(z,w,t\right)=z^{1-\nu}q_{\nu,n}\left(w,z,t\right).
\]
When $n=0$, the relation is just (\ref{eq:symmetry of q_nu}). Assume
the relation holds for some $n\in\mathbb{N}$, by the equivalence
between (\ref{eq:recursion n->n+1}) and (\ref{eq:alternative of n->n+1}),
we have that
\[
\begin{split}w^{1-\nu}q_{\nu,n+1}\left(z,w,t\right) & =\int_{0}^{t}\int_{0}^{\infty}q_{\nu}\left(z,\xi,t-\tau\right)w^{1-\nu}q_{\nu,n}\left(\xi,w,\tau\right)V\left(\xi\right)d\xi d\tau\\
 & =\int_{0}^{t}\int_{0}^{\infty}q_{\nu}\left(z,\xi,t-\tau\right)\xi^{1-\nu}q_{\nu,n}\left(w,\xi,\tau\right)V\left(\xi\right)d\xi d\tau\\
 & =z^{1-\nu}\int_{0}^{t}\int_{0}^{\infty}q_{\nu}\left(\xi,z,t-\tau\right)q_{\nu,n}\left(w,\xi,\tau\right)V\left(\xi\right)d\xi d\tau\\
 & =z^{1-\nu}\int_{0}^{t}\int_{0}^{\infty}\tilde{q}_{\nu,n}\left(w,\xi,\tau\right)q_{\nu}\left(\xi,z,t-\tau\right)V\left(\xi\right)d\xi d\tau\\
 & =z^{1-\nu}\tilde{q}_{\nu,n+1}\left(w,z,t\right)=z^{1-\nu}q_{\nu,n+1}\left(w,z,t\right).
\end{split}
\]
(\ref{eq:symmetry of q^V_nu}) follows immediately from here. Then,
to establish (\ref{eq: duhamel integral equiv}), we write its right
hand side as
\[
\begin{split} & q_{\nu}\left(z,w,t\right)+\int_{0}^{t}\int_{0}^{\infty}q_{\nu}^{V}\left(z,\xi,t-\tau\right)q_{\nu}\left(\xi,w,\tau\right)V\left(\xi\right)d\xi d\tau\\
= & q_{\nu}\left(z,w,t\right)+\sum_{n=0}^{\infty}\int_{0}^{t}\int_{0}^{\infty}\tilde{q}_{\nu,n}\left(z,\xi,t-\tau\right)q_{\nu}\left(\xi,w,\tau\right)V\left(\xi\right)d\xi d\tau\\
= & q_{\nu}\left(z,w,t\right)+\sum_{n=0}^{\infty}\tilde{q}_{\nu,n+1}\left(z,w,t\right)=q_{\nu}^{V}\left(z,w,t\right),
\end{split}
\]
where again we use the fact that (\ref{eq:recursion n->n+1}) and
(\ref{eq:alternative of n->n+1}) are equivalent.

Now we move on to the second statement of Proposition \ref{prop:results on q^V_nu}.
One can apply the theory of hypoellipticity in exactly the same way
as in the proof of Proposition \ref{prop:results on model eq with potential},
to show that $\left(z,t\right)\mapsto q_{\nu}^{V}\left(z,w,t\right)$
is a smooth solution to the equation in (\ref{eq:model equation with potential}).
Then, (\ref{eq:symmetry of q^V_nu}) implies that $\left(w,t\right)\mapsto q_{\nu}^{V}\left(z,w,t\right)$
is a smooth solution to the corresponding Kolmogorov forward equation.
Again, by (\ref{eq: estimate of q^V_nu/q_nu}), for every $t>0$,
\[
\begin{split}\sup_{z\in\left(0,\infty\right)}\left|\int_{0}^{\infty}q_{v}^{V}\left(z,w,t\right)dw-\int_{0}^{\infty}q_{v}\left(z,w,t\right)dw\right| & \leq e^{t\left\Vert V\right\Vert _{u}}-1\end{split}
.
\]
So as $t\searrow0$, $\int_{0}^{\infty}q_{v}^{V}\left(z,w,t\right)dw$
tends to 1 uniformly in $z$ in any compact subset of $\left(0,\infty\right)$.
In addition, by (\ref{eq: estimate of q_nu}) and (\ref{eq:exp estimate for q^V_=00005Cnu}),
it is easy to see that for every $\delta>0$, $\lim_{t\searrow0}\int_{(0,\infty)\backslash\left(z-\delta,z+\delta\right)}q_{\nu}^{V}\left(z,w,t\right)dw=0$
uniformly for $z$ in any compact subset of $\left(0,\infty\right)$
and $\lim_{z\searrow0}q_{\nu}^{V}\left(z,w,t\right)=0$ for every
$\left(w,t\right)\in\left(0,\infty\right)^{2}$. This is sufficient
for us to conclude that $q_{\nu}^{V}\left(z,w,t\right)$ is the fundamental
solution to (\ref{eq:model equation with potential}).

Finally, to show (\ref{eq:CK for q^V_nu}), we choose any $g\in C_{c}\left(\left(0,\infty\right)\right)$
and use (\ref{eq:prob interpretation of q^V_nu}) to write
\[
\begin{split} & \int_{0}^{\infty}g\left(w\right)q_{\nu}^{V}\left(z,w,t+s\right)dw\\
= & \mathbb{E}\left[e^{\int_{0}^{t+s}V\left(Y\left(z,\tau\right)\right)d\tau}g\left(Y\left(z,t+s\right)\right);t+s<\zeta_{0}^{Y}\left(z\right)\right]\\
= & \mathbb{E}\left[e^{\int_{0}^{t}V\left(Y\left(z,\tau\right)\right)d\tau}\int_{0}^{\infty}g\left(w\right)q_{\nu}^{V}\left(Y\left(z,t\right),w,s\right)dw;t<\zeta_{0}^{Y}\left(z\right)\right]\\
= & \int_{0}^{\infty}\int_{0}^{\infty}g\left(w\right)q_{\nu}^{V}\left(\xi,w,s\right)q_{\nu}^{V}\left(z,\xi,t\right)d\xi dw,
\end{split}
\]
where again we used the strong Markov property of $Y\left(z,t\right)$.
\end{proof}

\subsection{Back to the general equation.}

After solving (\ref{eq:model equation}) and its two variations (\ref{eq:model eq with drift})
and (\ref{eq:model equation with potential}), we are now ready to
tackle the original problem (\ref{eq:general IVP equation}). Let
$\phi,$ $d$, $\psi$, $\tilde{d}$, $\theta$ and $V$ be the same
as in $\mathsection3.1$ and $\mathsection3.2$. We define 
\begin{equation}
p\left(x,y,t\right):=q_{\nu}^{V}\left(\phi\left(x\right),\phi\left(y\right),t\right)\frac{\theta\left(\phi\left(x\right)\right)}{\theta\left(\phi\left(y\right)\right)}\phi^{\prime}\left(y\right)\text{ for }\left(x,y,t\right)\in\left(0,\infty\right)^{3}.\label{eq:def of p}
\end{equation}
Compiling all the results obtained above, we state the main theorem
for $p\left(x,y,t\right)$ as follows.
\begin{thm}
\label{thm:main result on p(x,y,t)}Let $p\left(x,y,t\right)$ be
defined as in (\ref{eq:def of p}). For every $x,y>0$ and $t,s>0$,
\[
\frac{\left(\phi\left(y\right)\right)^{1-\nu}\theta^{2}\left(\phi\left(y\right)\right)}{\phi^{\prime}\left(y\right)}p\left(x,y,t\right)=\frac{\left(\phi\left(x\right)\right)^{1-\nu}\theta^{2}\left(\phi\left(x\right)\right)}{\phi^{\prime}\left(x\right)}p\left(y,x,t\right)
\]
and
\[
p\left(x,y,t+s\right)=\int_{0}^{\infty}p\left(x,u,t\right)p\left(u,y,s\right)du.
\]
For every $y>0$, $\left(x,t\right)\mapsto p\left(x,y,t\right)$ is
a smooth solution to the equation in (\ref{eq:general IVP equation}),
i.e., 
\[
\left(\partial_{t}-a\left(x\right)\partial_{x}^{2}-b\left(x\right)\partial_{x}\right)p\left(x,y,t\right)=0;
\]
for every $x>0$, $\left(y,t\right)\mapsto p\left(x,y,t\right)$ is
a smooth solution to the corresponding Kolmogorov forward equation,
i.e., 
\[
\partial_{t}p\left(x,y,t\right)-\partial_{y}^{2}\left(a\left(y\right)p\left(x,y,t\right)\right)+\partial_{y}\left(b\left(y\right)p\left(x,y,t\right)\right)=0.
\]
Moreover, $p\left(x,y,t\right)$ is the fundamental solution to (\ref{eq:general IVP equation}),
and given $f\in C_{b}\left(\left(0,\infty\right)\right)$, if 
\begin{equation}
u_{f}\left(x,t\right):=\int_{0}^{\infty}p\left(x,y,t\right)f\left(y\right)dy\text{ for }\left(x,t\right)\in\left(0,\infty\right)^{2},\label{eq:def of u_f using p(x,y,t)}
\end{equation}
then $u_{f}\left(x,t\right)$ is smooth on $\left(0,\infty\right)^{2}$
and is the unique solution in $C^{2,1}\left(\left(0,\infty\right)^{2}\right)$
to (\ref{eq:general IVP equation}). 

Let $\left\{ X\left(x,t\right):\left(x,t\right)\in\left[0,\infty\right)^{2}\right\} $
be the process defined as in (\ref{eq:transformation between X and Y}).
Then, 
\begin{equation}
u_{f}\left(x,t\right)=\mathbb{E}\left[f\left(X\left(x,t\right)\right);t<\zeta_{0}^{X}\left(x\right)\right]\text{ for every }\left(x,t\right)\in\left(0,\infty\right)^{2}.\label{eq:prob interpretation of u_f}
\end{equation}
and for every $\Gamma\subseteq\mathcal{B}\left(\left(0,\infty\right)\right)$,
\begin{equation}
\int_{\Gamma}p\left(x,y,t\right)dy=\mathbb{P}\left(X\left(x,t\right)\in\Gamma,t<\zeta_{0}^{X}\left(x\right)\right).\label{eq:prob interpretation of p(x,y,t)}
\end{equation}
\end{thm}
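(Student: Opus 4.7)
The plan is to assemble the three-step reduction built in Sections 3.1--3.2 and deduce each assertion about $p(x,y,t)$ from the corresponding property already established for $q_\nu^V$. First I will verify the integral representation (\ref{eq:def of u_f using p(x,y,t)}) by composing Lemma \ref{lem:change of variable}, Lemma \ref{lem:drift to potential transformation}, and (\ref{eq:def of v^V_h}): taking $g=f\circ\psi$ and $h=g/\theta$, one has $u_f(x,t)=\tilde v_g(\phi(x),t)=\theta(\phi(x))v_h^V(\phi(x),t)=\theta(\phi(x))\int_0^\infty q_\nu^V(\phi(x),w,t)h(w)\,dw$, and the change of variable $w=\phi(y)$, $dw=\phi'(y)\,dy$, together with $h(\phi(y))=f(y)/\theta(\phi(y))$, converts this into $\int_0^\infty p(x,y,t)f(y)\,dy$. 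Smoothness of $u_f$ on $(0,\infty)^2$ is inherited from that of $v_h^V$ via Proposition \ref{prop:results on model eq with potential}.

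The symmetry identity is an algebraic rewrite of (\ref{eq:symmetry of q^V_nu}) evaluated at $(z,w)=(\phi(x),\phi(y))$. The Chapman--Kolmogorov identity for $p$ follows from (\ref{eq:CK for q^V_nu}) by substituting $\xi=\phi(u)$: the $\theta$--factors telescope in the product $p(x,u,t)p(u,y,s)$. For the backward equation in $x$, fix $y$: Proposition \ref{prop:results on q^V_nu} says that $z\mapsto q_\nu^V(z,\phi(y),t)$ solves (\ref{eq:model equation with potential}); Lemma \ref{lem:drift to potential transformation} then gives that $\theta(z)q_\nu^V(z,\phi(y),t)$ (times the $y$--dependent normalizing constant $\phi'(y)/\theta(\phi(y))$, which does not affect the equation) solves (\ref{eq:model eq with drift}); and Lemma \ref{lem:change of variable} gives that its pull-back by $\phi$ solves the original equation (\ref{eq:general IVP equation}). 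The resulting function is exactly $p(\cdot,y,\cdot)$.

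The forward equation in $y$ is the main obstacle. My plan is to derive it from the symmetry and the backward equation, paralleling the analogous step in Proposition \ref{prop:results on q^V_nu}. Setting $K(y):=\phi(y)^{1-\nu}\theta^2(\phi(y))/\phi'(y)$ and $\rho(y):=K(x)/K(y)$, the symmetry reads $p(x,y,t)=\rho(y)p(y,x,t)$, so applying the backward equation in the first variable to $p(y,x,t)$ gives $\partial_t p(x,y,t)=\rho(y)\bigl[a(y)\partial_y^2+b(y)\partial_y\bigr]p(y,x,t)$. Expanding both this expression and the candidate right-hand side $\partial_y^2(a(y)p(x,y,t))-\partial_y(b(y)p(x,y,t))$ in terms of $p(y,x,t)=p(x,y,t)/\rho(y)$ and matching the coefficients of $p$, $\partial_y p$, $\partial_y^2 p$, the equation reduces to the single identity $\partial_y(a\rho)=b\rho$, equivalently $(\log K)'=(a'-b)/a$. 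I will verify this by direct differentiation: using $\phi'=\sqrt{\phi/a}$, $\theta'/\theta=-\tilde d/(2z)$, and the definition $d=\tfrac12+\tfrac{2b-a'}{2\sqrt a}\sqrt{\phi}-\nu$, the terms $(1-\nu-d)\phi'/\phi$ and $-\phi''/\phi'$ each produce a $\pm 1/(2\sqrt{a\phi})$ contribution that cancel, leaving $-(2b-a')/(2a)+a'/(2a)=(a'-b)/a$.

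The initial-data and boundary-value properties of $p$ that identify it as the fundamental solution follow from the corresponding properties of $v_h^V$ in Proposition \ref{prop:results on model eq with potential}: as $t\searrow 0$, $u_f(x,t)=\tilde v_g(\phi(x),t)\to g(\phi(x))=f(x)$ pointwise (and uniformly on compacts), and as $x\searrow 0$, $\phi(x)\searrow 0$ forces $u_f(x,t)\to 0$. Uniqueness of $u_f$ in $C^{2,1}$ and the probabilistic representation (\ref{eq:prob interpretation of u_f}) are already contained in Proposition \ref{prop:uniqueness of bar v_g  and u_f}. Finally, (\ref{eq:prob interpretation of p(x,y,t)}) follows from (\ref{eq:prob interpretation of u_f}) by approximating $\mathbf{1}_\Gamma$ with bounded continuous functions and invoking dominated convergence on both sides.
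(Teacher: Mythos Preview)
Your proposal is correct and follows essentially the same route as the paper: each assertion about $p$ is pulled back from the corresponding assertion about $q_\nu^V$ in Proposition~\ref{prop:results on q^V_nu} via the definition (\ref{eq:def of p}), and the assertions about $u_f$ come from Lemmas~\ref{lem:change of variable} and~\ref{lem:drift to potential transformation} together with Propositions~\ref{prop:uniqueness of bar v_g  and u_f} and~\ref{prop:results on model eq with potential}. The paper's proof simply declares that everything ``follows by a simple change of variable'' and does not spell out the forward-equation step; your reduction of that step to the identity $(\log K)'=(a'-b)/a$ and its direct verification is an explicit execution of what the paper leaves implicit, not a different argument.
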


\begin{proof}
With all the preparations, there is not much to be done for the proof
of this theorem. All the statements on $p\left(x,y,t\right)$, except
for (\ref{eq:prob interpretation of p(x,y,t)}), follow from (\ref{eq:def of p})
and Proposition \ref{prop:results on q^V_nu} via a simple change
of variable. As for $u_{f}\left(x,t\right)$, we notice that by (\ref{eq:def of u_f using p(x,y,t)}),
\[
\begin{split}u_{f}\left(x,t\right) & =\theta\left(\phi\left(x\right)\right)\int_{0}^{\infty}q_{\nu}^{V}\left(\phi\left(x\right),w,t\right)\frac{g\left(w\right)}{\theta\left(w\right)}dw\\
 & =\theta\left(\phi\left(x\right)\right)v_{h}^{V}\left(\phi\left(x\right),t\right),
\end{split}
\]
where $g=f\circ\psi$ and $h=\frac{g}{\theta}$. Since $v_{h}^{V}\left(z,t\right)$
is smooth and is the unique solution in $C^{2,1}\left(\left(0,\infty\right)^{2}\right)$
to (\ref{eq:model equation with potential}) with initial data $h$,
$u_{f}\left(x,t\right)$ is also smooth on $\left(0,\infty\right)^{2}$
and according to Lemma \ref{lem:drift to potential transformation},
\[
u_{f}\left(x,t\right)=\tilde{v}_{g}\left(\phi\left(x\right),t\right)\text{ for }\left(x,t\right)\in\left(0,\infty\right)^{2}
\]
solves the equation (\ref{eq:model eq with drift}) with initial data
$g$. Lemma \ref{lem:change of variable} and Proposition \ref{prop:uniqueness of bar v_g  and u_f}
imply (\ref{eq:prob interpretation of u_f}) and the uniqueness of
$u_{f}\left(x,t\right)$. Finally, (\ref{eq:prob interpretation of p(x,y,t)})
follows from (\ref{eq:prob interpretation of u_f}) since $f\in C_{b}\left(\left(0,\infty\right)\right)$
is arbitrary. 
\end{proof}
Although (\ref{eq:def of p}) provides the exact formula for $p\left(x,y,t\right)$,
it is generally impossible to compute the series in (\ref{eq:def of q^V_nu})
explicitly. However, (\ref{eq:def of q^V_nu}) does offer good estimates
for $p\left(x,y,t\right)$, at least for small $t$, in terms of functions
whose exact expressions are more accessible. These estimates are more
accurate than the general heat kernel estimates such as (\ref{eq:general heat kernel estimate}).
The following facts follow immediately from (\ref{eq:estimate for q_nu,n})
and (\ref{eq: estimate of q^V_nu/q_nu}), so we will omit the proof.
\begin{cor}
\label{cor:p_approx. ratio estimate}If we define
\[
p^{approx.}\left(x,y,t\right):=q_{\nu}\left(\phi\left(x\right),\phi\left(y\right),t\right)\frac{\theta\left(\phi\left(x\right)\right)}{\theta\left(\phi\left(y\right)\right)}\phi^{\prime}\left(y\right)\text{ for }\left(x,y,t\right)\in\left(0,\infty\right)^{3},
\]
then for every $t>0$, 
\[
\sup_{x,y\in(0,\infty)^{2}}\left|\frac{p\left(x,y,t\right)}{p^{approx.}\left(x,y,t\right)}-1\right|\leq e^{\left\Vert V\right\Vert _{u}t}-1.
\]

Furthermore, for every $k\in\mathbb{N}$, if we define
\[
p^{approx.,k}\left(x,y,t\right):=\left(\sum_{n=0}^{k}q_{\nu,n}\left(\phi\left(x\right),\phi\left(y\right),t\right)\right)\frac{\theta\left(\phi\left(x\right)\right)}{\theta\left(\phi\left(y\right)\right)}\phi^{\prime}\left(y\right)\text{ for }\left(x,y,t\right)\in\left(0,\infty\right)^{3},
\]
then for every $t>0$,
\[
\sup_{x,y\in(0,\infty)^{2}}\left|\frac{p\left(x,y,t\right)-p^{approx.,k}\left(x,y,t\right)}{p^{approx.}\left(x,y,t\right)}\right|\leq e^{\left\Vert V\right\Vert _{u}t}\frac{\left\Vert V\right\Vert _{u}^{k+1}}{\left(k+1\right)!}t^{k+1}.
\]
\end{cor}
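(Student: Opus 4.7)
The key observation is that the definitions of $p(x,y,t)$ and $p^{approx.}(x,y,t)$ differ only in the first factor: one uses $q_\nu^V$ and the other uses $q_\nu$ evaluated at the same transformed point $(\phi(x),\phi(y),t)$, while the other factors $\theta(\phi(x))/\theta(\phi(y))\cdot\phi'(y)$ are identical and strictly positive. Therefore
\[
\frac{p(x,y,t)}{p^{approx.}(x,y,t)} \;=\; \frac{q_\nu^V(\phi(x),\phi(y),t)}{q_\nu(\phi(x),\phi(y),t)}.
\]
The first inequality then reduces immediately to (\ref{eq: estimate of q^V_nu/q_nu}), which was already established in Lemma \ref{lem:def of q^V_=00005Cnu}, evaluated at the point $(\phi(x),\phi(y),t)\in(0,\infty)^3$. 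Taking the supremum over $x,y$ just reindexes the supremum in Lemma \ref{lem:def of q^V_=00005Cnu}.

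For the second estimate, I would use the series representation (\ref{eq:def of q^V_nu}) to write
\[
p(x,y,t) - p^{approx.,k}(x,y,t) \;=\; \left(\sum_{n=k+1}^{\infty} q_{\nu,n}(\phi(x),\phi(y),t)\right)\frac{\theta(\phi(x))}{\theta(\phi(y))}\phi'(y),
\]
so dividing by $p^{approx.}(x,y,t)$ reduces the problem to bounding the tail $\sum_{n=k+1}^{\infty} q_{\nu,n}/q_\nu$. Applying the pointwise bound $|q_{\nu,n}(z,w,t)|\leq \frac{(t\|V\|_u)^n}{n!}\,q_\nu(z,w,t)$ from (\ref{eq:estimate for q_nu,n}) gives
\[
\sup_{x,y}\left|\frac{p(x,y,t)-p^{approx.,k}(x,y,t)}{p^{approx.}(x,y,t)}\right| \;\leq\; \sum_{n=k+1}^{\infty}\frac{(t\|V\|_u)^n}{n!}.
\]

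Finally, the tail of the exponential series is controlled by the standard inequality
\[
\sum_{n=k+1}^{\infty}\frac{x^n}{n!} \;=\; \frac{x^{k+1}}{(k+1)!}\sum_{m=0}^{\infty}\frac{x^m\,(k+1)!}{(m+k+1)!} \;\leq\; \frac{x^{k+1}}{(k+1)!}\,e^{x}
\]
for $x\geq 0$, which, applied with $x = t\|V\|_u$, yields exactly the claimed bound $e^{\|V\|_u t}\,\|V\|_u^{k+1}\,t^{k+1}/(k+1)!$. There is no real obstacle here; the entire corollary is essentially a restatement of the quantitative Duhamel estimates on $q_\nu^V$ from Lemma \ref{lem:def of q^V_=00005Cnu}, transported through the change of variable $x\mapsto\phi(x)$ and the positive multiplicative factor $\theta(\phi(x))/\theta(\phi(y))\cdot\phi'(y)$ that cancels in the ratios. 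This is why the authors justifiably omit the proof.
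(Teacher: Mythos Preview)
Your proposal is correct and matches the paper's approach exactly: the authors explicitly state that the corollary follows immediately from (\ref{eq:estimate for q_nu,n}) and (\ref{eq: estimate of q^V_nu/q_nu}) and omit the proof, and you have supplied precisely those details, including the tail-of-exponential bound needed for the second inequality.
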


\section{Derivatives of Solutions to General Equation}

In $\mathsection2.2$, we investigated the derivatives of $q_{\nu}\left(z,w,t\right)$.
In this section, we will apply the results from $\mathsection2.2$
to studying the derivatives of $q_{\nu}^{V}\left(z,w,t\right)$. As
in the previous sections, we assume that $\nu<1$. For pedagogical
purposes, we will only discuss the derivatives of $q_{\nu}^{V}\left(z,w,t\right)$
in $z$ while $t>0$ is fixed and $z,w$ are close to 0. The derivatives
in $w$ can be treated by the symmetry of $q_{\nu}^{V}\left(z,w,t\right)$
as indicated in (\ref{eq:symmetry of q^V_nu}). Also, we will only
consider the cases when $\partial_{z}^{k}q_{\nu}\left(z,w,t\right)$,
as well as $\partial_{z}^{k}q_{\nu}^{V}\left(z,w,t\right)$, is bounded
near 0, i.e., either $\nu\in\left(-\infty,1\right)\backslash\mathbb{Z}$
and $k\in\left\{ 0,1,\cdots,\left[1-\nu\right]\right\} $, or $-\nu\in\mathbb{N}$
and $k\in\mathbb{N}$. In these cases, we are able to obtain rather
explicit bounds on $\partial_{z}^{k}q_{\nu}^{V}\left(z,w,t\right)$
for $\left(z,w\right)$ near 0. Seeing the role of $V\left(z\right)$
in configuring $q_{\nu}^{V}\left(z,w,t\right)$, one naturally expects
that the global properties of $V\left(z\right)$ will affect the regularity
of $q_{\nu}^{V}\left(z,w,t\right)$. In the upcoming discussions on
the derivatives of $q_{\nu}^{V}\left(z,w,t\right)$, we often need
to impose global conditions on $V\left(z\right)$, such as $C_{k}^{V}<\infty$.

We will start with a generalization of (\ref{eq:CK equation for q_nu}),
the Chapman-Kolmogorov equation satisfied by $q_{\nu}\left(z,w,t\right)$.
For every $\nu<1$ and $k\in\mathbb{N}$, let $Q_{\nu+k}\left(z,w,t\right)$
be defined as in (\ref{eq:def of Q_nu}).
\begin{lem}
\label{lem:formula for q_nu,(k,l)}If, either $\nu\in\left(-\infty,1\right)\backslash\mathbb{Z}$
and $k,l\in\mathbb{N}$ with $l\leq k\leq\left[1-\nu\right]$, or
$-\nu\in\mathbb{N}$ and $k,l\in\mathbb{N}$ with $l\leq k$, then
for every $z,w>0$ and every $t,s>0$, 
\begin{equation}
\int_{0}^{\infty}Q_{\nu+k}\left(z,\xi,t\right)Q_{\nu+l}\left(\xi,w,s\right)d\xi=\frac{1}{\left(t+s\right)^{k-l}}\sum_{j=0}^{k-l}\binom{k-l}{j}t^{j}s^{k-l-j}Q_{\nu+l+j}\left(z,w,t+s\right).\label{eq:relation of q_nu, (k,l)}
\end{equation}
\end{lem}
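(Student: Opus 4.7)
My plan is to prove (\ref{eq:relation of q_nu, (k,l)}) by induction on $m:=k-l\ge 0$, with $\mu:=\nu+l$ held fixed throughout. The base case will be the Chapman--Kolmogorov equation applied to $Q_{\mu}$, and the inductive step will be driven by the first-order telescoping identity
\[
\partial_{z}Q_{\mu+m}(z,\xi,u)=\frac{1}{u}\bigl(Q_{\mu+m+1}(z,\xi,u)-Q_{\mu+m}(z,\xi,u)\bigr),
\]
which is (\ref{eq:(dz)^k of Q_nu+l}) with $k=1$ and is valid uniformly in the $Q$-notation across the admissible range of indices.

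For the base case $m=0$, the claim reduces to $\int_{0}^{\infty}Q_{\mu}(z,\xi,t)Q_{\mu}(\xi,w,s)\,d\xi=Q_{\mu}(z,w,t+s)$. When $\mu<1$, which covers all admissible pairs except the sub-case $-\nu\in\mathbb{N}$ with $\mu\ge 2$, we have $Q_{\mu}=q_{\mu}$ and the statement is exactly (\ref{eq:CK equation for q_nu}) with parameter $\mu$. In the remaining sub-case, $Q_{\mu}(z,w,u)=q_{2-\mu}(w,z,u)$ with $2-\mu\le 0<1$; using (\ref{eq:symmetry of q_nu}) to rewrite both factors in the ``forward'' order of spatial arguments, the $\xi$-integral produces a factor $(w/z)^{\mu-1}$ times the Chapman--Kolmogorov integral for $q_{2-\mu}$, and a final application of (\ref{eq:symmetry of q_nu}) in reverse recovers $q_{2-\mu}(w,z,t+s)=Q_{\mu}(z,w,t+s)$.

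For the inductive step, assume the formula at level $m$ and denote its left-hand side by $I_{m}(z,w,t,s)$. Rewriting the telescoping identity as $Q_{\mu+m+1}(z,\xi,t)=t\,\partial_{z}Q_{\mu+m}(z,\xi,t)+Q_{\mu+m}(z,\xi,t)$ and substituting yields $I_{m+1}=t\,\partial_{z}I_{m}+I_{m}$; the interchange of $\partial_{z}$ with the $\xi$-integral is justified by the uniform-in-$z$ bounds (\ref{eq: estimate of q_nu}) and (\ref{eq:estimate of q_nu for nu>=00003D1}). Applying the same telescoping identity with $u=t+s$ to each $Q_{\mu+j}(z,w,t+s)$ appearing in the inductive hypothesis produces an expansion of $t\,\partial_{z}I_{m}$; adding $I_{m}$ after decomposing its scalar prefactor via $(t+s)/(t+s)^{m+1}$ gives four sums, two of which cancel in pairs, and a reindexing $i=j+1$ in one of the survivors allows Pascal's identity $\binom{m}{i-1}+\binom{m}{i}=\binom{m+1}{i}$ to merge the remaining two into the claimed expression at level $m+1$.

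The main obstacle I anticipate is purely clerical: keeping the four sums, the shift $i=j+1$, and the cancellations transparent enough that Pascal's identity applies cleanly. The only substantive subtlety lies in the base case when $-\nu\in\mathbb{N}$ and $\mu\ge 2$, where $Q_{\mu}$ is defined by swapping the first two arguments of $q_{2-\mu}$; one must track the symmetry factor $(w/z)^{\mu-1}$ with care so that it cancels exactly against its reassembled counterpart to recover $Q_{\mu}(z,w,t+s)$ after invoking (\ref{eq:CK equation for q_nu}) for $q_{2-\mu}$.
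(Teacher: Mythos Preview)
Your inductive step is exactly the computation the paper carries out for the passage $(k,l)\to(k+1,l)$, so that half is fine. The organization, however, differs from the paper's: the paper inducts on $k+l$ (starting from the single base case $k=l=0$, i.e.\ Chapman--Kolmogorov for $q_{\nu}$) and proves \emph{both} transitions $(k,l)\to(k+1,l)$ and $(k,l)\to(k,l+1)$, the latter via an integration by parts in $\xi$ together with the check that $Q_{\nu+k}(z,\xi,t)Q_{\nu+l}(\xi,w,s)\to 0$ as $\xi\searrow 0$ when $l+1\le k$. Your scheme instead fixes $\mu=\nu+l$ and inducts only on $m=k-l$, which is tidier in the inductive step but forces a stronger base case: you need Chapman--Kolmogorov for $Q_{\mu}$ for every admissible $\mu$, not just for $\mu=\nu$.

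This is where there is a small but genuine gap. When $-\nu\in\mathbb{N}$, the values $\mu=\nu+l$ run over all of $\{\nu,\nu+1,\dots\}$, and in particular $\mu=1$ occurs (at $l=1-\nu$). Your two stated sub-cases, $\mu<1$ and $\mu\ge 2$, do not cover it: the paper's (\ref{eq:CK equation for q_nu}) is proved only for parameters strictly below $1$, and your symmetry reduction for $\mu\ge 2$ appeals to Chapman--Kolmogorov for $q_{2-\mu}$ with $2-\mu\le 0$, which again misses $q_{1}$. The fix is easy---either observe that $q_{1}=q_{1}^{*}$ by Remark~\ref{rem:the other formula for q_nu} and invoke the zero-flux semigroup property, or sidestep the issue entirely by also carrying out the $(k,l)\to(k,l+1)$ step as the paper does, so that the only base case needed is $l=0$.
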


\begin{proof}
For convenience, we write $Q_{\nu,\left(k,l\right)}\left(z,w,t,s\right):=\int_{0}^{\infty}Q_{\nu+k}\left(z,\xi,t\right)Q_{\nu+l}\left(\xi,w,s\right)d\xi$.
We will prove (\ref{eq:relation of q_nu, (k,l)}) by induction on
the value of $k+l$. When $k=l=0$, (\ref{eq:relation of q_nu, (k,l)})
is simply reduced to (\ref{eq:CK equation for q_nu}). In particular,
this means that there is nothing to be done when $\nu\in\left(0,1\right)$.
We only need to treat the case when $\nu\leq0$. Assume that (\ref{eq:relation of q_nu, (k,l)})
holds for all the pairs $\left(k,l\right)$ that satisfies the condition
in the statement with $k+l\leq m$ for some $m\in\mathbb{N}$ ($m\leq2\left[1-\nu\right]$
if $\nu\notin\mathbb{Z}$). Choosing any such a pair $\left(k,l\right)$,
it suffices for us to show that (\ref{eq:relation of q_nu, (k,l)})
also holds for $\left(k+1,l\right)$ and $\left(k,l+1\right)$ (provided
that, in the later case, $l+1\leq k$). First, we use (\ref{eq:(dz)^k of Q_nu+l})
and the inductive hypothesis to write $Q_{\nu,\left(k+1,l\right)}\left(z,w,t,s\right)$
as
\[
\begin{split} & t\partial_{z}Q_{\nu,\left(k,l\right)}\left(z,w,t,s\right)+Q_{\nu,\left(k,l\right)}\left(z,w,t,s\right)\\
= & \frac{t}{\left(t+s\right)^{k+1-l}}\sum_{j=0}^{k-l}\binom{k-l}{j}t^{j}s^{k-l-j}\left(Q_{\nu+l+j+1}\left(z,w,t+s\right)-Q_{\nu+l+j}\left(z,w,t+s\right)\right)\\
 & \hspace{2cm}\hspace{2cm}\hspace{0.5cm}+\frac{1}{\left(t+s\right)^{k-l}}\sum_{j=0}^{k-l}\binom{k-l}{j}t^{j}s^{k-l-j}Q_{\nu+l+j}\left(z,w,t+s\right)\\
= & \frac{1}{\left(t+s\right)^{k+1-l}}\sum_{j=1}^{k+1-l}\binom{k-l}{j-1}t^{j}s^{k+1-l-j}Q_{\nu+l+j}\left(z,w,t+s\right)\\
 & \hspace{2cm}+\frac{1}{\left(t+s\right)^{k+1-l}}\sum_{j=0}^{k-l}\binom{k-l}{j}t^{j}s^{k+1-l-j}Q_{\nu+l+j}\left(z,w,t+s\right)\\
= & \frac{1}{\left(t+s\right)^{k+1-l}}\sum_{j=0}^{k+1-l}\binom{k+1-l}{j}t^{j}s^{k+1-l-j}Q_{\nu+l+j}\left(z,w,t+s\right).
\end{split}
\]
So (\ref{eq:relation of q_nu, (k,l)}) holds for $\left(k+1,l\right)$.
Next, it is easy to check that for every admissible pair $\left(k,l\right)$
with $l+1\leq k$,
\[
\lim_{\xi\searrow0}Q_{\nu+k}\left(z,\xi,t\right)Q_{\nu+l}\left(\xi,w,s\right)=0.
\]
Therefore, again, by (\ref{eq:(dz)^k of Q_nu+l}), we have that
\[
\begin{split}Q_{\nu,\left(k,l+1\right)}\left(z,w,t,s\right) & =s\int_{0}^{\infty}Q_{\nu+k}\left(z,\xi,t\right)\partial_{\xi}Q_{\nu+l}\left(\xi,w,s\right)d\xi+Q_{\nu,\left(k,l\right)}\left(z,w,t,s\right)\\
 & =-s\int_{0}^{\infty}\partial_{\xi}Q_{\nu+k}\left(z,\xi,t\right)Q_{\nu+l}\left(\xi,w,s\right)d\xi+Q_{\nu,\left(k,l\right)}\left(z,w,t,s\right)\\
 & =s\partial_{z}Q_{\nu,\left(k-1,l\right)}\left(z,w,t,s\right)+Q_{\nu,\left(k,l\right)}\left(z,w,t,s\right),
\end{split}
\]
which, by the inductive hypothesis, is equal to
\[
\begin{split} & \frac{s}{\left(t+s\right)^{k-l}}\sum_{j=0}^{k-1-l}\binom{k-1-l}{j}t^{j}s^{k-l-1-j}\left(Q_{\nu+l+j+1}\left(z,w,t+s\right)-Q_{\nu+l+j}\left(z,w,t+s\right)\right)\\
 & \hspace{2cm}\hspace{2cm}\hspace{1cm}+\frac{1}{\left(t+s\right)^{k-l}}\sum_{j=0}^{k-l}\binom{k-l}{j}t^{j}s^{k-l-j}Q_{\nu+l+j}\left(z,w,t+s\right)\\
= & \frac{s}{\left(t+s\right)^{k-l}}\sum_{j=0}^{k-1-l}\binom{k-1-l}{j}t^{j}s^{k-l-1-j}Q_{\nu+l+j+1}\left(z,w,t+s\right)\\
 & \hspace{1cm}+\frac{1}{\left(t+s\right)^{k-l}}\sum_{j=0}^{k-1-l}\binom{k-1-l}{j}t^{j+1}s^{k-l-j-1}Q_{\nu+l+j+1}\left(z,w,t+s\right)\\
= & \frac{1}{\left(t+s\right)^{k-l-1}}\sum_{j=0}^{k-1-l}\binom{k-1-l}{j}t^{j}s^{k-l-1-j}Q_{\nu+l+j+1}\left(z,w,t+s\right).
\end{split}
\]
This confirms that (\ref{eq:relation of q_nu, (k,l)}) also holds
for $\left(k,l+1\right)$.
\end{proof}
\begin{prop}
\label{prop:derivatives of q^V_nu}Let $\left\{ q_{\nu,n}\left(z,w,t\right):n\in\mathbb{N}\right\} $
be the sequence of functions defined as in (\ref{eq:recursion n->n+1}),
or equivalently, as in (\ref{eq:alternative of n->n+1}). For every
$k\in\mathbb{N}$ , we set
\[
S_{k}\left(z,w,t\right):=\sum_{m=0}^{k}\binom{k}{m}Q_{\nu+m}\left(z,w,t\right)\text{ for }\left(z,w,t\right)\in\left(0,\infty\right)^{3}.
\]
If $C_{k}^{V}<\infty$, where either $\nu\in\left(-\infty,1\right)\backslash\mathbb{Z}$
and $k\in\left\{ 0,1,2,\cdots,\left[1-\nu\right]\right\} $, or $-\nu\in\mathbb{N}$
and $k\in\mathbb{N}$, then for every $n\in\mathbb{N}$ and every
$\left(z,w,t\right)\in\left(0,\infty\right)^{3}$,
\begin{equation}
\begin{split}\left|\partial_{z}^{k}q_{\nu,n}\left(z,w,t\right)\right| & \leq\frac{\left(3^{k}C_{k}^{V}\right)^{n}}{n!}\frac{\left[1+\left(n\wedge k\right)t\right]^{k}}{t^{k-n}}S_{k}\left(z,w,t\right),\end{split}
\label{eq:estimate of kth deriv of q^V,n}
\end{equation}
and hence 
\begin{equation}
\left|\partial_{z}^{k}q_{\nu}^{V}\left(z,w,t\right)\right|\leq\frac{\left(1+kt\right)^{k}}{t^{k}}e^{3^{k}C_{k}^{V}t}S_{k}\left(z,w,t\right),\label{eq:estimate of kth deriv of q^V}
\end{equation}
which implies that, for every $t>0$, $\partial_{z}^{k}q_{\nu}^{V}\left(z,w,t\right)$
is bounded when $z,w$ are near 0.
\end{prop}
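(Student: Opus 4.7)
The plan is induction on $n$ (at each fixed admissible $k$), with the bound (\ref{eq:estimate of kth deriv of q^V}) following from (\ref{eq:estimate of kth deriv of q^V,n}) by termwise summation. Since $[1+(n\wedge k)t]^k\le (1+kt)^k$ uniformly in $n$ and $(3^k C_k^V)^n/(n!\,t^{k-n})=(3^k C_k^V t)^n/(n!\,t^k)$, the sum collapses to
\[
\sum_{n=0}^\infty\bigl|\partial_z^k q_{\nu,n}(z,w,t)\bigr|\le \frac{(1+kt)^k}{t^k}S_k(z,w,t)\sum_{n=0}^\infty \frac{(3^k C_k^V t)^n}{n!}=\frac{(1+kt)^k}{t^k}e^{3^k C_k^V t}S_k(z,w,t),
\]
and local boundedness of $\partial_z^k q_\nu^V$ near $0$ follows since $S_k$ is bounded on $(0,M)^2$ by Corollary \ref{cor:regularity of bar q at 0}. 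The base case $n=0$ of the main estimate is immediate: $q_{\nu,0}=Q_\nu$, and (\ref{eq:(dz)^k of Q_nu+l}) with $l=0$ gives $|\partial_z^k q_{\nu,0}(z,w,t)|\le t^{-k}S_k(z,w,t)$, which is exactly the claimed bound at $n=0$.

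For the inductive step $n\mapsto n+1$, I would start from (\ref{eq:recursion n->n+1}), differentiate $k$ times under the integral in $z$, and rewrite $\partial_z^k q_\nu(z,\xi,t-\tau)=(-1)^k\partial_\xi^k Q_{\nu+k}(z,\xi,t-\tau)$ via (\ref{eq:(dz)^k of Q_nu+l}). The naive pointwise estimate $|\partial_z^k q_\nu|\le (t-\tau)^{-k}S_k$ would leave a non-integrable singularity at $\tau=t$, so the key maneuver is integration by parts $k$ times in $\xi$, followed by a Leibniz expansion producing
\[
\partial_z^k q_{\nu,n+1}(z,w,t)=\sum_{j=0}^k\binom{k}{j}\int_0^t\!\int_0^\infty Q_{\nu+k}(z,\xi,t-\tau)\,V^{(k-j)}(\xi)\,\partial_\xi^j q_{\nu,n}(\xi,w,\tau)\,d\xi\,d\tau.
\]
Boundary terms at $\xi=0$ vanish under the stated hypothesis on $k$: in the non-integer case because $q_{\nu,n}$ inherits from $q_\nu$ a vanishing order at $0$ that dominates the behaviour of $Q_{\nu+k}$ there; in the case $-\nu=N\in\mathbb{N}$ via the refined definition (\ref{eq:def of Q_nu}) of $Q_{\nu+k}$ combined with the symmetry (\ref{eq:symmetry of q_nu}); boundary terms at $\infty$ vanish by exponential decay. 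Each $\partial_\xi^j q_{\nu,n}$ is then controlled by the inductive hypothesis (using the monotonicities $S_j\le S_k$ and $C_j^V\le C_k^V$); after expanding $S_j=\sum_l\binom{j}{l}Q_{\nu+l}$ and reducing each inner integral $\int_0^\infty Q_{\nu+k}(z,\xi,t-\tau)\,Q_{\nu+l}(\xi,w,\tau)\,d\xi$ via Lemma \ref{lem:formula for q_nu,(k,l)}, every contribution collapses into a finite combination of $Q_{\nu+m}(z,w,t)$ weighted by coefficients of the form $(t-\tau)^\alpha\tau^\beta/t^\gamma$, and the remaining outer $\tau$-integrals are elementary Beta integrals whose evaluation lifts $n!$ to $(n+1)!$ and raises the power of $t$ by one.

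The main obstacle is the combinatorial bookkeeping required to collapse the resulting multi-index sum into the clean form claimed for $n+1$. Concretely, one must verify that (i) the compound sum over the Leibniz index, the expansion of $S_j$, and the expansion from Lemma \ref{lem:formula for q_nu,(k,l)} aggregates to a constant bounded by $3^k$, via an identity such as $\sum_{j=0}^k\binom{k}{j}2^j=3^k$; (ii) the polynomial factor $[1+(n\wedge k)\tau]^k$ from the inductive bound combines with the Beta-integral weights to yield exactly $[1+((n+1)\wedge k)t]^k$; and (iii) for those Leibniz terms in which the inductive bound on $\partial_\xi^j q_{\nu,n}$ carries a non-integrable $\tau^{n-j}$ with $j>n$, the number of integrations by parts can be reduced (using $m<k$ integrations by parts and retaining $k-m$ derivatives on $Q_{\nu+k'}$ via (\ref{eq:(dz)^k of Q_nu+l})) so that each $\tau$-integral converges and the same combinatorial cancellation still closes. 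Once (i)--(iii) are checked, the induction completes and both (\ref{eq:estimate of kth deriv of q^V,n}) and (\ref{eq:estimate of kth deriv of q^V}) follow.
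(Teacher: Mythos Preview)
Your overall architecture---induction on $n$, the base case via (\ref{eq:(dz)^k of Q_nu+l}), and the termwise summation to pass from (\ref{eq:estimate of kth deriv of q^V,n}) to (\ref{eq:estimate of kth deriv of q^V})---matches the paper exactly. The inductive step, however, is organized quite differently, and your point (iii) is where the argument as written does not close.

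The paper never tries to integrate by parts a variable number of times inside a Leibniz expansion. Instead it splits according to whether $k\ge n+1$ or $k\le n$, and in each regime uses a different device. When $k\le n$, it switches to the \emph{alternative} recursion (\ref{eq:alternative of n->n+1}), so that $\partial_z^k$ lands directly on $q_{\nu,n}(z,\xi,t-\tau)$; the inductive bound then carries a factor $(t-\tau)^{n-k}\ge 0$, and the $\tau$--integral on $[0,t]$ is harmless with no integration by parts at all. When $k\ge n+1$, it keeps (\ref{eq:recursion n->n+1}) but splits the time integral at $t/2$: on the lower half one leaves $\partial_z^k$ on $q_\nu(z,\xi,t-\tau)$ (safe since $t-\tau\ge t/2$) and uses only the crude bound (\ref{eq:estimate for q_nu,n}) on $q_{\nu,n}$; on the upper half one changes $\tau\mapsto t-\tau$ and then invokes (\ref{eq:expression of (dz)^k of v_g as integral of g^(k) using Q}) once, putting all $k$ derivatives on $V(\xi)q_{\nu,n}(\xi,w,t-\tau)$ with $t-\tau\ge t/2$, so the inductive factor $(t-\tau)^{n-j}$ is again bounded below. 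Both singularities are thus removed by the single trick of halving the time interval, and the remaining combinatorics reduces to the elementary inequality $\int_0^{1/2}(1+s)^a(1-s)^b\,ds\le 3^a/(b+1)$, which is where the constant $3^k$ actually arises---not from the binomial identity $\sum_j\binom{k}{j}2^j$ you anticipate in (i).

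Your proposal to ``reduce the number of integrations by parts'' for the bad Leibniz terms would, if carried out, trade the $\tau^{n-j}$ singularity for a $(t-\tau)^{-(k-m)}$ one coming from the retained $\xi$--derivatives on $Q_{\nu+k}$; balancing the two simultaneously over all of $[0,t]$ is not obviously possible, and in any case the bookkeeping would be substantially heavier than the paper's route. The $t/2$ split is the missing idea.
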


\begin{proof}
Note that if $\left(\nu,k\right)$ is a pair as described in the statement,
then for every $m\in\left\{ 0,1,\cdots,k\right\} $, $Q_{\nu+m}\left(z,w,t\right)\geq0$
for every $\left(z,w,t\right)\in\left(0,\infty\right)^{3}$. Since
(\ref{eq:estimate of kth deriv of q^V}) follows from (\ref{eq:def of q^V_nu})
and (\ref{eq:estimate of kth deriv of q^V,n}), we only need to show
(\ref{eq:estimate of kth deriv of q^V,n}), and we will do so by induction
on $n$. When $n=0$, (\ref{eq:estimate of kth deriv of q^V,n}) is
a trivial consequence of (\ref{eq:(dz)^k of q_nu expressed as sum of q_nu+j}).
Now assume that for some $n\in\mathbb{N}$, the inequality in (\ref{eq:estimate of kth deriv of q^V,n})
holds for every $k$ satisfying the stated requirements, i.e., $k\in\left\{ 0,1,\cdots,\left[1-\nu\right]\right\} $
if $\nu\notin\mathbb{Z}$, and $k\in\mathbb{N}$ if $-\nu\in\mathbb{N}$.
we want to show that it is also the case with $n+1$. Because (\ref{eq:estimate of kth deriv of q^V,n})
is reduced to (\ref{eq:estimate for q_nu,n}) when $k=0$, we only
need to verify the inequality for $n+1$ and $k\geq1$. 

Let us first consider the case when $k\geq n+1$. Based on the inductive
hypothesis, it is easy to see that for every $w>0$ and every $0<\tau\leq t$,
$\xi\mapsto V\left(\xi\right)q_{\nu,n}\left(\xi,w,t-\tau\right)$
is at least $k$ times differentiable on $\left(0,\infty\right)$
with $C_{k}^{V\left(\cdot\right)q_{\nu,n}\left(\cdot,w,t-\tau\right)}<\infty$
and $\lim_{\xi\searrow0}\partial_{\xi}^{j}\left(V\left(\xi\right)q_{\nu,n}\left(\xi,w,t-\tau\right)\right)=0$
for $j\in\left\{ 0,1,\cdots,\left(k\wedge\left[1-\nu\right]\right)-1\right\} $.
Thus, we can use (\ref{eq:expression of (dz)^k of v_g as integral of g^(k) using Q})
and the recursion relation (\ref{eq:alternative of n->n+1}) to write
\begin{equation}
\begin{split}\partial_{z}^{k}q_{\nu,n+1}\left(z,w,t\right) & =\int_{0}^{t/2}\int_{0}^{\infty}\partial_{z}^{k}q_{\nu}\left(z,\xi,t-\tau\right)V\left(\xi\right)q_{\nu,n}\left(\xi,w,\tau\right)d\xi d\tau\\
 & \hspace{1cm}+\int_{0}^{t/2}\int_{0}^{\infty}Q_{\nu+k}\left(z,\xi,\tau\right)\partial_{\xi}^{k}\left(V\left(\xi\right)q_{\nu,n}\left(\xi,w,t-\tau\right)\right)d\xi d\tau.
\end{split}
\label{eq:(dz)^k q^V,n+1 split into two parts (n leq k)}
\end{equation}
By (\ref{eq:(dz)^k of Q_nu+l}), the first term on the right hand
side of (\ref{eq:(dz)^k q^V,n+1 split into two parts (n leq k)})
is equal to 
\[
\sum_{j=0}^{k}\binom{k}{j}\left(-1\right)^{k-j}\int_{0}^{t/2}\frac{1}{\left(t-\tau\right)^{k}}\int_{0}^{\infty}Q_{\nu+j}\left(z,\xi,t-\tau\right)V\left(\xi\right)q_{\nu,n}\left(\xi,w,\tau\right)d\xi d\tau,
\]
which, by (\ref{eq:estimate for q_nu,n}) and (\ref{eq:relation of q_nu, (k,l)}),
is bounded by 
\[
\begin{split} & \frac{\left\Vert V\right\Vert _{u}^{n+1}}{n!}\sum_{j=0}^{k}\binom{k}{j}\sum_{m=0}^{j}\binom{j}{m}Q_{\nu+m}\left(z,w,t\right)\int_{0}^{t/2}\frac{1}{\left(t-\tau\right)^{k}}\frac{\left(t-\tau\right)^{m}\tau^{j+n-m}}{t^{j}}d\tau\\
= & \frac{\left\Vert V\right\Vert _{u}^{n+1}}{n!}\sum_{m=0}^{k}\binom{k}{m}Q_{\nu+m}\left(z,w,t\right)\sum_{p=0}^{k-m}\binom{k-m}{p}\int_{0}^{t/2}\frac{\left(t-\tau\right)^{m-k}\tau^{p+n}}{t^{p+m}}d\tau\\
= & \frac{1}{t^{k-n-1}}\frac{\left\Vert V\right\Vert _{u}^{n+1}}{n!}\sum_{m=0}^{k}\binom{k}{m}Q_{\nu+m}\left(z,w,t\right)\int_{0}^{1/2}s^{n}\left(\frac{1+s}{1-s}\right)^{k-m}ds\\
\leq & \frac{3^{k}}{2^{n+1}t^{k-n-1}}\frac{\left\Vert V\right\Vert _{u}^{n+1}}{\left(n+1\right)!}S_{k}\left(z,w,t\right),
\end{split}
\]
where in the last inequality we used the fact that $\frac{1+s}{1-s}\leq3$
for $s\in\left(0,\frac{1}{2}\right)$. Following (\ref{eq:relation of q_nu, (k,l)})
and the inductive hypothesis, the second term on the right hand side
of (\ref{eq:(dz)^k q^V,n+1 split into two parts (n leq k)}) is bounded
by
\[
\begin{split} & C_{k}^{V}\sum_{j=0}^{k}\binom{k}{j}\int_{0}^{t/2}\int_{0}^{\infty}Q_{\nu+k}\left(z,\xi,\tau\right)\left|\partial_{\xi}^{k-j}q_{\nu,n}\left(\xi,w,t-\tau\right)\right|d\xi d\tau\\
\leq & \frac{3^{kn}}{n!}\left(C_{k}^{V}\right)^{n+1}\sum_{j=0}^{k}\binom{k}{j}\int_{0}^{t/2}\frac{\left(1+n\left(t-\tau\right)\right)^{k-j}}{\left(t-\tau\right)^{k-j-n}}\sum_{p=0}^{k-j}\binom{k-j}{p}\int_{0}^{\infty}Q_{\nu+k}\left(z,\xi,\tau\right)Q_{\nu+p}\left(\xi,w,t-\tau\right)d\xi d\tau\\
= & \frac{3^{kn}}{n!}\left(C_{k}^{V}\right)^{n+1}\sum_{j=0}^{k}\binom{k}{j}\sum_{p=0}^{k-j}\binom{k-j}{p}\frac{1}{t^{k-p}}\sum_{m=p}^{k}\binom{k-p}{m-p}Q_{\nu+m}\left(z,w,t\right)\\
 & \hspace{2cm}\hspace{2cm}\hspace{1cm}\hspace{1cm}\cdot\int_{0}^{t/2}\left(1+n\left(t-\tau\right)\right)^{k-j}\tau^{m-p}\left(t-\tau\right)^{j+n-m}d\tau.
\end{split}
\]
Similarly as above, by exchanging the order of summations, we can
reduce the expression above to 
\[
\begin{split} & \frac{3^{kn}}{n!}\frac{\left(C_{k}^{V}\right)^{n+1}}{t^{k-n-1}}\sum_{m=0}^{k}\binom{k}{m}Q_{\nu+m}\left(z,w,t\right)\\
 & \hspace{1cm}\cdot\int_{0}^{1/2}\left[\frac{1+s}{1-s}+t\left(n+1\right)\left(s+\frac{n}{n+1}\right)\right]^{m}\left[1+\left(n+1\right)t\left(1-s\right)\right]^{k-m}\left(1-s\right)^{n}ds\\
\leq & \frac{3^{kn}}{n!}\left(C_{k}^{V}\right)^{n+1}\sum_{m=0}^{k}\binom{k}{m}Q_{\nu+m}\left(z,w,t\right)\frac{\left(1+t\left(n+1\right)\right)^{k}}{t^{k-n-1}}\int_{0}^{1/2}\left(\frac{1+s}{1-s}\right)^{m}\left(1-s\right)^{n}ds\\
\leq & \frac{1-2^{-n-1}}{\left(n+1\right)!}\left(3^{k}C_{k}^{V}\right)^{n+1}S_{k}\left(z,w,t\right)\frac{\left(1+\left(n+1\right)t\right)^{k}}{t^{k-n-1}}
\end{split}
\]
where in the second last inequality we used the fact that $s+\frac{n}{n+1}\leq\frac{1+s}{1-s}$
for all $s\in\left(0,\frac{1}{2}\right).$ Putting the two estimates
on the right hand side of (\ref{eq:(dz)^k q^V,n+1 split into two parts (n leq k)})
together, we get that
\[
\begin{split}\left|\partial_{z}^{k}q_{\nu,n+1}\left(z,w,t\right)\right| & \leq\frac{\left(3^{k}C_{k}^{V}\right)^{n+1}}{\left(n+1\right)!}S_{k}\left(z,w,t\right)\frac{\left(1+\left(n+1\right)t\right)^{k}}{t^{k-n-1}}.\end{split}
\]
which confirms that (\ref{eq:estimate of kth deriv of q^V,n}) holds
for $n+1$ and $k\geq n+1$. 

Now assume that $1\leq k\leq n$. This time we will switch to the
recursion relation (\ref{eq:recursion n->n+1}). By (\ref{eq:recursion n->n+1}),
(\ref{eq:relation of q_nu, (k,l)}) and the inductive hypothesis,
we have that $\left|\partial_{z}^{k}q_{\nu,n+1}\left(z,w,t\right)\right|$
is bounded by
\begin{align*}
 & \left\Vert V\right\Vert _{u}\int_{0}^{t}\int_{0}^{\infty}\left|\partial_{z}^{k}q_{\nu,n}\left(z,\xi,t-\tau\right)\right|q_{\nu}\left(\xi,w,\tau\right)d\xi d\tau\\
\leq & \frac{3^{kn}\left(C_{k}^{V}\right)^{n+1}\left(1+kt\right)^{k}}{n!}\sum_{m=0}^{k}\binom{k}{m}\sum_{p=0}^{k-m}\binom{k-m}{p}\frac{Q_{\nu+m}\left(z,w,t\right)}{t^{p+m}}\int_{0}^{t}\left(t-\tau\right)^{n-k+m}\tau^{p}d\tau\\
= & \frac{3^{kn}\left(C_{k}^{V}\right)^{n+1}\left(1+kt\right)^{k}}{n!t^{k-n-1}}\sum_{m=0}^{k}\binom{k}{m}Q_{\nu+m}\left(z,w,t\right)\int_{0}^{1}\left(1-s\right)^{n-k+m}\left(1+s\right)^{k-m}ds\\
\le & \frac{\left(3^{k}C_{k}^{V}\right)^{n+1}\left(1+kt\right)^{k}}{\left(n+1\right)!t^{k-n-1}}S_{k}\left(z,w,t\right),
\end{align*}
where in the last inequality we computed that
\begin{align*}
\int_{0}^{1}\left(1-s\right)^{n-k+m}\left(1+s\right)^{k-m}ds & =\frac{1}{n+1}+\frac{2\left(k-m\right)}{n+1}\int_{0}^{1}\left(1-s\right)^{n-k+m}\left(1+s\right)^{k-m-1}ds\\
 & \leq\frac{1}{n+1}\left(1+2\left(k-m\right)\int_{0}^{1}\left(1+s\right)^{k-1}ds\right)\\
 & \leq\frac{2^{k+1}-1}{n+1}\leq\frac{3^{k}}{n+1}\text{ for every }k\geq1.
\end{align*}
\end{proof}
\begin{rem}
Here we only provide the detailed treatment of the derivatives of
$q_{\nu}^{V}\left(z,w,t\right)$ in $z$. We can obtain estimates
on the derivatives of $q_{\nu}^{V}\left(z,w,t\right)$ in $w$ by
(\ref{eq:symmetry of q^V_nu}), the symmetry property of $q_{\nu}^{V}\left(z,w,t\right)$.
Alternatively, we can rely on the counterpart of (\ref{eq:expression of (dz)^k of v_g as integral of g^(k)})
with respect to the forward variable $w$. Namely, if $g\in C^{l}\left(\left(0,\infty\right)\right)$
with $C_{l}^{g}<\infty$ for some $l\in\mathbb{N}$, and 
\[
v_{g}^{*}\left(w,t\right):=\int_{0}^{\infty}q_{\nu}\left(z,w,t\right)g\left(z\right)dz\text{ for }\left(w,t\right)\in\left(0,\infty\right)^{2},
\]
then, by (\ref{eq:(dz)^k q_nu =00003D (dw)^k q_(nu+k)}), we have
that 
\[
\partial_{w}^{l}v_{g}^{*}\left(w,t\right)=\int_{0}^{\infty}q_{\nu-l}\left(z,w,t\right)g^{\left(l\right)}\left(z\right)dz\text{ for every }\left(w,t\right)\in\left(0,\infty\right)^{2}.
\]
Thus, one can mimic the proof of Proposition \ref{prop:derivatives of q^V_nu}
and apply the formula above, in a similar way as we used (\ref{eq:expression of (dz)^k of v_g when nu=00003D-N}),
to studying the derivatives in the forward variable whenever the time
variable is small. With this method, not only can we obtain estimates
on the derivatives of $q_{\nu}^{V}\left(z,w,t\right)$ in $w$, we
can also treat the mixed derivatives of $q_{\nu}^{V}\left(z,w,t\right)$
in $z$ and $w$. Because it is largely a repetition of the proof
of Proposition \ref{prop:derivatives of q^V_nu}, possibly with more
cumbersome technicalities, we will not carry out the derivations in
details.

It is also possible to use our method to study the case when $\partial_{z}^{k}q_{\nu}^{V}\left(z,w,t\right)$
is unbounded near 0, i.e., when $\nu\in\left(-\infty,1\right)\backslash\mathbb{Z}$
and $k\geq\left[1-\nu\right]+1$. However, in this case we face the
obstacle that $Q_{\nu+k}\left(z,w,t\right)$ is possibly negative
and/or locally non-integrable in $z$ near 0. Therefore, it is difficult
to derive the counterpart of (\ref{eq:relation of q_nu, (k,l)}) and
(\ref{eq:estimate of kth deriv of q^V,n}). Besides, we also expect
that the asymptotics of $V\left(z\right)$ near 0 will play a more
significant role in determining the regularity/singularity level of
$q_{\nu}^{V}\left(z,w,t\right)$ near 0. We plan to return to this
problem in the sequel to this paper in which we will investigate the
behaviors of $q_{\nu}^{V}\left(z,w,t\right)$ near 0 under proper
local conditions on $V\left(z\right)$.
\end{rem}

Proposition \ref{prop:derivatives of q^V_nu} leads to the following
results on the derivatives of $v_{h}^{V}\left(z,t\right)$.
\begin{cor}
\label{cor:asymp of (dz)^k v^V_h}Assume that $C_{k}^{V}<\infty$,
where either $\nu\in\left(-\infty,1\right)\backslash\mathbb{Z}$ and
$k\in\left\{ 0,1,2,\cdots,\left[1-\nu\right]\right\} $, or $-\nu\in\mathbb{N}$
and $k\in\mathbb{N}$, Given a function $h$ on $\left(0,\infty\right)$
that satisfies (\ref{eq:estimate on initial h}), let $v_{h}^{V}\left(z,t\right)$
be defined as in (\ref{eq:def of v^V_h}). Then for every $t>0$,
$\partial_{z}^{k}v_{h}^{V}\left(z,t\right)$ is bounded near 0, and
for every $M>0$,
\[
\sup_{\left(z,t\right)\in\left(0,M\right)\times\left(0,1\right)}t^{k}\left|\partial_{z}^{k}v_{h}^{V}\left(z,t\right)\right|<\infty.
\]
\end{cor}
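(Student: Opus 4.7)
The plan is to differentiate under the integral sign in \eqref{eq:def of v^V_h} and then apply the pointwise bound from Proposition \ref{prop:derivatives of q^V_nu}. First I would justify the differentiation: the estimate \eqref{eq:estimate of kth deriv of q^V} together with the growth hypothesis \eqref{eq:estimate on initial h} on $h$ ensures that $w\mapsto\partial_{z}^{k}q_{\nu}^{V}(z,w,t)h(w)$ is absolutely integrable and uniformly dominated on compact subsets of $(0,\infty)^{2}$ in $(z,t)$, so that
\[
\partial_{z}^{k}v_{h}^{V}(z,t)=\int_{0}^{\infty}\partial_{z}^{k}q_{\nu}^{V}(z,w,t)\,h(w)\,dw
\]
holds and is continuous in $(z,t)$. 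Combined with \eqref{eq:estimate of kth deriv of q^V}, this reduces the problem to bounding
\[
\int_{0}^{\infty}S_{k}(z,w,t)\,|h(w)|\,dw=\sum_{m=0}^{k}\binom{k}{m}\int_{0}^{\infty}Q_{\nu+m}(z,w,t)\,|h(w)|\,dw
\]
uniformly for $(z,t)\in(0,M)\times(0,1)$, since the prefactor $(1+kt)^{k}t^{-k}e^{3^{k}C_{k}^{V}t}$ is exactly cancelled by the $t^{k}$ appearing in the statement (up to a bounded multiplicative constant on $t\in(0,1)$).

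Next I would estimate each $Q_{\nu+m}(z,w,t)$ via the bounds established in Section 2. The admissibility hypothesis on $(\nu,k)$ guarantees that for every $m\in\{0,1,\dots,k\}$ either $\nu+m<1$, in which case $Q_{\nu+m}=q_{\nu+m}$ and \eqref{eq: estimate of q_nu} gives
\[
Q_{\nu+m}(z,w,t)\leq\frac{z^{1-\nu-m}}{t^{2-\nu-m}}e^{-(\sqrt{z}-\sqrt{w})^{2}/t};
\]
or else $-\nu\in\mathbb{N}$ and $m\geq 2-\nu$, in which case $Q_{\nu+m}(z,w,t)=q_{2-\nu-m}(w,z,t)$ with $2-\nu-m$ a non-positive integer, so the same estimate \eqref{eq: estimate of q_nu} (with the roles of $z$ and $w$ swapped) applies and yields a bound of the form $Cw^{\nu+m-1}t^{-(\nu+m)}e^{-(\sqrt{z}-\sqrt{w})^{2}/t}$. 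In either case, on $(0,M)\times(0,1)$ the explicit powers of $z$ or the powers of $t$ stay bounded (the exponents $1-\nu-m$ are non-negative by construction), so the problem further reduces to showing that
\[
\int_{0}^{\infty}w^{\alpha}e^{-(\sqrt{z}-\sqrt{w})^{2}/t}e^{C\sqrt{w}}dw
\]
is bounded in $(z,t)\in(0,M)\times(0,1)$ for each fixed real exponent $\alpha$ that appears.

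The final step is a routine Gaussian estimate: the substitution $u=\sqrt{w}$ converts the integral into $2\int_{0}^{\infty}u^{2\alpha+1}e^{-(\sqrt{z}-u)^{2}/t}e^{Cu}du$, and completing the square absorbs the linear term $Cu$ into the quadratic, leaving an integral bounded uniformly on $(0,M)\times(0,1)$. Combining all of these pieces yields the stated uniform bound $\sup_{(0,M)\times(0,1)}t^{k}|\partial_{z}^{k}v_{h}^{V}(z,t)|<\infty$, and the boundedness of $\partial_{z}^{k}v_{h}^{V}(\cdot,t)$ near $0$ for each fixed $t>0$ follows from the same chain of estimates by fixing $t$. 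The main technical nuisance, rather than a genuine obstacle, is bookkeeping the case split in the definition of $Q_{\nu+m}$ when $-\nu\in\mathbb{N}$ and $m$ is large: one must verify that the swapped formula $q_{2-\nu-m}(w,z,t)$ produces a $w$-integrable bound that is still compatible with the $e^{C\sqrt{w}}$ growth of $h$, which is the reason the Gaussian factor in $(\sqrt{z}-\sqrt{w})^{2}/t$ is essential.
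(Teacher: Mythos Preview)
Your overall strategy---differentiate under the integral sign using \eqref{eq:estimate of kth deriv of q^V} and then bound $\int_{0}^{\infty}S_{k}(z,w,t)\,|h(w)|\,dw$---is exactly what the paper has in mind (the paper in fact omits the proof, saying only that the result follows ``in a straightforward way'' from \eqref{eq:estimate of kth deriv of q^V}).

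There is, however, a slip in your execution. You assert that after invoking the crude bound \eqref{eq: estimate of q_nu} (or its swapped analogue) ``the explicit powers of $z$ or the powers of $t$ stay bounded''. The $z$-powers are indeed harmless, but the $t$-powers are not: in the first case you pick up $t^{-(2-\nu-m)}$ with $2-\nu-m>1$, and in the swapped case $t^{-(\nu+m)}$ with $\nu+m\ge 2$, both of which blow up as $t\searrow 0$. The Gaussian integral you then analyse contributes at best a factor $\sqrt{t}$, which is not enough to compensate. So the reduction to ``$\int w^{\alpha}e^{-(\sqrt{z}-\sqrt{w})^{2}/t}e^{C\sqrt{w}}\,dw$ bounded'' does not close the argument.

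The fix is to avoid passing entirely through \eqref{eq: estimate of q_nu} and instead use the total-mass identity. One checks directly from the series \eqref{eq:def of q_sigma} (as in \eqref{eq:total mass of q_nu}) that $\int_{0}^{\infty}Q_{\nu+m}(z,w,t)\,dw\le 1$ in \emph{both} cases: for $\nu+m<1$ it is the sub-probability $\gamma(1-\nu-m,z/t)/\Gamma(1-\nu-m)$, and for the swapped form $q_{2-\nu-m}(w,z,t)$ a term-by-term computation gives exactly $1$. Then split the $w$-integral at, say, $w=4z$: on $\{w\le 4z\}$ bound $e^{C\sqrt{w}}\le e^{2C\sqrt{M}}$ and use the unit mass; on $\{w>4z\}$ the Gaussian factor gives $e^{-(\sqrt{z}-\sqrt{w})^{2}/t}\le e^{-w/(4t)}$, and now the crude bound combined with $\int_{0}^{\infty}w^{\beta}e^{-w/(8t)}\,dw=(8t)^{\beta+1}\Gamma(\beta+1)$ exactly cancels the offending negative power of $t$. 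With this repair the rest of your argument goes through.
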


It is easy to see that (\ref{eq:estimate of kth deriv of q^V}) enables
us to compute $\partial_{z}^{k}v_{h}^{V}\left(z,t\right)$ by differentiating
under the integral sign in the right hand side of (\ref{eq:def of v^V_h}),
from where the results in Corollary \ref{cor:asymp of (dz)^k v^V_h}
follow in a straightforward way. The proof is omitted. 

Proposition \ref{prop:derivatives of q^V_nu} also provides a passage
to the smoothness of $q_{\nu}^{V}\left(z,w,t\right)$ in all three
variables. 
\begin{cor}
\label{cor:smoothness of q^V_nu}If $C_{1}^{V}<\infty$, then $q_{\nu}^{V}\left(z,w,t\right)$
is smooth in $\left(z,w,t\right)$ on $\left(0,\infty\right)^{3}$
.
\end{cor}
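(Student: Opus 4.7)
My approach is to promote the two separate smoothness results already in hand for $q_{\nu}^{V}$---smoothness in $(z,t)$ for each fixed $w$, and smoothness in $(w,t)$ for each fixed $z$---into joint smoothness on $\left(0,\infty\right)^{3}$ by summing the backward and forward evolution equations into a single non-degenerate parabolic PDE in three variables and invoking hypoellipticity.

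Concretely, Proposition \ref{prop:results on q^V_nu} gives that $\left(z,t\right)\mapsto q_{\nu}^{V}$ is a smooth solution of $\left(\partial_{t}-L_{\nu}-V\left(z\right)\right)q_{\nu}^{V}=0$ and that $\left(w,t\right)\mapsto q_{\nu}^{V}$ is a smooth solution of the corresponding Kolmogorov forward equation. Using the symmetry relation \eqref{eq:symmetry of q^V_nu} together with the backward equation satisfied by $q_{\nu}^{V}\left(w,z,t\right)$, a short calculation (substitute $q_{\nu}^{V}(z,w,t)=(z/w)^{1-\nu}q_{\nu}^{V}(w,z,t)$ and collect terms) identifies this forward equation as $\left(\partial_{t}-L_{\nu}^{*}-V\left(w\right)\right)q_{\nu}^{V}=0$, where $L_{\nu}^{*}=w\partial_{w}^{2}+\left(2-\nu\right)\partial_{w}$. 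Adding the two pointwise identities on $\left(0,\infty\right)^{3}$ produces
\begin{equation*}
\mathcal{P}q_{\nu}^{V}:=\bigl(2\partial_{t}-z\partial_{z}^{2}-\nu\partial_{z}-w\partial_{w}^{2}-\left(2-\nu\right)\partial_{w}-V\left(z\right)-V\left(w\right)\bigr)q_{\nu}^{V}=0.
\end{equation*}
The spatial principal symbol of $\mathcal{P}$ is $z\xi_{1}^{2}+w\xi_{2}^{2}$, which is strictly positive definite at every point of $\left(0,\infty\right)^{2}$, so $\mathcal{P}$ is uniformly parabolic on any compact subset of $\left(0,\infty\right)^{3}$; its coefficients are $C^{\infty}$ there, since $V$ is smooth by its definition in terms of $\tilde{d}$ and $\tilde{d}'$. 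Classical interior regularity for parabolic operators with smooth coefficients---equivalently, hypoellipticity, invoked throughout the paper via $\mathsection 7.3$--$\mathsection 7.4$ of \cite{PDEStroock}---ensures that every distributional solution of $\mathcal{P}u=0$ is $C^{\infty}$ on $\left(0,\infty\right)^{3}$.

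To close, I would verify that $q_{\nu}^{V}$ is indeed a distributional solution. Uniform convergence of the defining series on compact subsets of $\left(0,\infty\right)^{3}$ makes $q_{\nu}^{V}$ jointly continuous, hence locally integrable. For any test function $\phi\in C_{c}^{\infty}\left(\left(0,\infty\right)^{3}\right)$, split $\mathcal{P}^{*}=\mathcal{P}_{1}^{*}+\mathcal{P}_{2}^{*}$ along the backward and forward pieces; for each fixed $w$, smoothness of $q_{\nu}^{V}$ in $(z,t)$ and the pointwise identity $\mathcal{P}_{1}q_{\nu}^{V}=0$ give $\int q_{\nu}^{V}\,\mathcal{P}_{1}^{*}\phi\,dz\,dt=0$ by ordinary integration by parts, and Fubini promotes this to the full three-fold integral. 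The analogous treatment of $\mathcal{P}_{2}^{*}$, with the roles of $z$ and $w$ swapped, completes the verification.

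The main obstacle is conceptual rather than computational: one has to resist trying to cascade higher and higher $z$-derivative bounds out of Proposition \ref{prop:derivatives of q^V_nu} (which is unavailable beyond $k=\left[1-\nu\right]$ when $\nu$ is non-integer) and instead observe that the backward and forward equations, summed, supply ellipticity in exactly the directions in which each individual equation alone fails to control joint regularity. The hypothesis $C_{1}^{V}<\infty$ enters only to secure the continuity and first-derivative control of $q_{\nu}^{V}$ already established upstream, which is what makes the distributional verification go through.
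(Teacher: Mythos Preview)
Your approach is correct and takes a genuinely different route from the paper's. The paper proceeds by first establishing the pointwise bound
\[
\left|\partial_{z}q_{\nu}^{V}\left(z,w,t\right)\right|\leq\frac{1+t}{t}\,e^{3C_{1}^{V}t}\bigl(q_{\nu}\left(z,w,t\right)+q_{\nu+1}\left(z,w,t\right)\bigr),
\]
which extends Proposition~\ref{prop:derivatives of q^V_nu} to $k=1$ even when $0<\nu<1$ (this is exactly where $C_{1}^{V}<\infty$ is spent), and then feeds this estimate through the Duhamel integral equation \eqref{eq: duhamel integral equiv} to show that $\left(w,t\right)\mapsto\partial_{z}q_{\nu}^{V}$ is smooth for each fixed $z$; joint smoothness is then obtained by bootstrapping through the backward and forward equations. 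Your argument bypasses any derivative estimate on $q_{\nu}^{V}$ entirely: summing the backward equation in $\left(z,t\right)$ and the forward equation in $\left(w,t\right)$ produces a single second-order parabolic operator in the three variables that is non-degenerate on $\left(0,\infty\right)^{3}$, and hypoellipticity finishes at once. This ``double the time derivative'' device is a standard way to upgrade separate partial regularity of a transition kernel to joint smoothness, and it gives a cleaner proof here. In fact your argument does not genuinely require $C_{1}^{V}<\infty$: the inputs you actually use---joint continuity of $q_{\nu}^{V}$ and separate smoothness in $\left(z,t\right)$ and $\left(w,t\right)$---are already supplied by Proposition~\ref{prop:results on q^V_nu} under the standing hypothesis $\left\Vert V\right\Vert_{u}<\infty$, so your final sentence slightly overstates where the extra assumption enters.
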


\begin{proof}
The first step is to show that, for every $\nu<1$, 
\begin{equation}
\left|\partial_{z}q_{\nu}^{V}\left(z,w,t\right)\right|\leq\frac{\left(1+t\right)}{t}e^{3C_{1}^{V}t}\left(q_{\nu}\left(z,w,t\right)+q_{\nu+1}\left(z,w,t\right)\right)\text{ for every }\left(z,w,t\right)\in\left(0,\infty\right)^{3}.\label{eq:estimate of (dz)q^V_nu}
\end{equation}
Obviously, if $\nu\leq0$, then $\left[1-\nu\right]\geq1$ and (\ref{eq:estimate of (dz)q^V_nu})
is just (\ref{eq:estimate of kth deriv of q^V}) with $k=1$. Now
we assume that $0<\nu<1$ and observe that, in this case, $Q_{\nu+1}\left(z,w,t\right)=q_{\nu+1}\left(z,w,t\right)$
is always non-negative and $z\mapsto q_{\nu+1}\left(z,w,t\right)$
is locally integrable near 0. It is easy to see that, if we restrict
ourselves to the case $k=1$, then we can repeat the entire proof
of Lemma \ref{lem:formula for q_nu,(k,l)} and Proposition \ref{prop:derivatives of q^V_nu}
without any change and get the same estimate on $\partial_{z}q_{\nu}^{V}\left(z,w,t\right)$,
which is exactly (\ref{eq:estimate of (dz)q^V_nu}). Next, following
(\ref{eq: estimate of q_nu}), (\ref{eq:estimate of q_nu for nu>=00003D1}),
(\ref{eq:(dz)^k of Q_nu+l}) and (\ref{eq: duhamel integral equiv}),
we see that for every $z>0$, $\left(w,t\right)\mapsto\partial_{z}q_{\nu}^{V}\left(z,w,t\right)$
is smooth on $\left(0,\infty\right)^{2}$. Since $\left(z,t\right)\mapsto q_{\nu}^{V}\left(z,w,t\right)$
and $\left(w,t\right)\mapsto q_{\nu}^{V}\left(z,w,t\right)$ are smooth
solutions to the backward equation and, respectively, the forward
equation associated with (\ref{eq:model equation with potential}),
we conclude that $\left(z,w,t\right)\mapsto q_{\nu}^{V}\left(z,w,t\right)$
is smooth on $\left(0,\infty\right)^{3}$.
\end{proof}
Now we return to $p\left(x,y,t\right)$, the fundamental solution
to (\ref{eq:general IVP equation}), and investigate the derivative
of $p\left(x,y,t\right)$ in $x$ near the boundary 0. By (\ref{eq:def of p}),
Corollary \ref{cor:smoothness of q^V_nu} immediately implies that,
if $C_{1}^{V}<\infty$, then $\left(x,y,t\right)\mapsto p\left(x,y,t\right)$
is smooth on $\left(0,\infty\right)^{3}$. We also want to obtain
specific bounds on the derivatives of $p\left(x,y,t\right)$ in $x$.
It is apparent that, besides the derivatives of $q_{\nu}^{V}\left(z,w,t\right)$,
the transformations $\phi$ and $\theta$ will also affect the regularity
of $p\left(x,y,t\right)$, which makes it complicated to track down
the derivatives of $p\left(x,y,t\right)$. By Faà di Bruno's formula,
we have that, for every $k\in\mathbb{N}$, $\partial_{x}^{k}p\left(x,y,t\right)$
is equal to
\begin{equation}
\begin{split} & \frac{\phi^{\prime}\left(y\right)}{\theta\left(\phi\left(y\right)\right)}\sum_{j=1}^{k}\sum_{i=0}^{j}\binom{j}{i}\partial_{z}^{i}q_{\nu}^{V}\left(\phi\left(x\right),\phi\left(y\right),t\right)\\
 & \quad\hspace{2cm}\cdot\theta^{\left(j-i\right)}\left(\phi\left(x\right)\right)B_{k,j}\left(\phi^{\prime}\left(x\right),\phi^{\prime\prime}\left(x\right),\cdots,\phi^{\left(k-j+1\right)}\left(x\right)\right),
\end{split}
\label{eq:formula of (dx)^k p(x,y,t)}
\end{equation}
where $B_{k,j}$, $j=1,\cdots,k$, refer to the Bell polynomials,
i.e., 
\[
B_{k,j}\left(x_{1},x_{2},\cdots,x_{k-j+1}\right):=\sum\frac{k!}{i_{1}!i_{2}!\cdots i_{k-j+1}!}\prod_{p=1}^{k-j+1}\left(\frac{x_{p}}{p!}\right)^{i_{p}},\,x_{p}\in\mathbb{R}\text{ for }1\leq p\leq k-j+1,
\]
with the summation taken over the collection of $\left(i_{1},\cdots,i_{k-j+1}\right)\subseteq\mathbb{N}^{k-j+1}$
such that $\sum_{p=1}^{k-j+1}i_{p}=j$ and $\sum_{p=1}^{k-j+1}p\cdot i_{p}=k$.
Seeing from (\ref{eq:formula of (dx)^k p(x,y,t)}), it is clear that
in general we cannot directly compare the regularity of $p\left(x,y,t\right)$
with that of $q_{\nu}^{V}\left(z,w,t\right)$, unless extra conditions
are imposed on $\theta$, $\phi$ and $V$.
\begin{prop}
\label{prop:regularity of p(x,y,t) and u_f}Assume that $a\left(x\right)$
and $b\left(x\right)$ satisfy Condition 1-3 and $p\left(x,y,t\right)$
is defined as in (\ref{eq:def of p}). Let $\nu$ and $k$ be such
that either $\nu\in\left(-\infty,1\right)\backslash\mathbb{Z}$ and
$k\in\left\{ 0,1,\cdots,\left[1-\nu\right]\right\} $, or $-\nu\in\mathbb{N}$
and $k\in\mathbb{N}$. Suppose that $C_{k}^{V}<\infty$, and $\phi$
and $\theta$ have bounded derivatives to the order of $k$ in a neighborhood
of 0. For $M>0$, we set 
\[
C_{k,\left(0,\phi\left(M\right)\right)}^{\theta}:=\max_{j=0,1,2,\cdots,k}\sup_{z\in\left(0,\phi\left(M\right)\right)}\left|\theta^{\left(j\right)}\left(z\right)\right|\text{ and }C_{k,\left(0,M\right)}^{\phi}:=\max_{j=0,1,2,\cdots,k}\sup_{x\in\left(0,M\right)}\left|\phi^{\left(j\right)}\left(x\right)\right|.
\]
Then for every $\left(x,y,t\right)\in\left(0,M\right)^{2}\times\left(0,\infty\right)$,
\[
\left|\partial_{x}^{k}p\left(x,y,t\right)\right|\leq C_{k,\left(0,\phi\left(M\right)\right)}^{\theta}\mathfrak{T}_{k}\left(C_{k,\left(0,M\right)}^{\phi}\right)e^{3^{k}C_{k}^{V}t}\left(\frac{1}{t}+k+1\right)^{k}S_{k}\left(\phi\left(x\right),\phi\left(y\right),t\right)\frac{\left|\phi^{\prime}\left(y\right)\right|}{\theta\left(\phi\left(y\right)\right)},
\]
where $\mathfrak{T}_{k}$ refers to the $k$th Touchard polynomial,
i.e.,
\[
\mathfrak{T}_{k}\left(x\right):=\sum_{j=1}^{k}B_{k,j}\left(x,\cdots,x\right)\text{ for }x\in\mathbb{R}.
\]
In particular, for every $t>0$, $\partial_{x}^{k}p\left(x,y,t\right)$
is bounded when $x,y$ are near 0. 

Moreover, given $f\in C_{b}\left(\left(0,\infty\right)\right)$, if
$u_{f}\left(x,t\right)$ is defined as in (\ref{eq:def of u_f using p(x,y,t)}),
then for every $t>0$, $\partial_{x}^{k}u_{f}\left(x,t\right)$ is
bounded near 0, and 
\[
\sup_{\left(x,t\right)\in\left(0,M\right)\times\left(0,1\right)}t^{k}\left|\partial_{x}^{k}u_{f}\left(x,t\right)\right|<\infty.
\]
 
\end{prop}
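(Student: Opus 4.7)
The plan is to start from the Faà di Bruno expansion of $\partial_x^k p(x,y,t)$ already recorded in (\ref{eq:formula of (dx)^k p(x,y,t)}), estimate each factor by results in hand, and then repackage the resulting double sum as a Touchard polynomial times the desired scalar factors. The three moving pieces inside the sum are the derivative $\partial_z^i q_\nu^V$ at $(\phi(x),\phi(y),t)$, the derivative $\theta^{(j-i)}(\phi(x))$, and the Bell polynomial $B_{k,j}(\phi'(x),\dots,\phi^{(k-j+1)}(x))$. Each admits a uniform bound on $(0,M)^2$, and the combinatorial identity $\sum_{j=1}^k B_{k,j}(c,\dots,c) = \mathfrak{T}_k(c)$ takes care of the outer sum.

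For the $q_\nu^V$ factor, I would invoke Proposition \ref{prop:derivatives of q^V_nu}, which gives
\[
\left|\partial_z^i q_\nu^V(\phi(x),\phi(y),t)\right| \le \frac{(1+it)^i}{t^i}\, e^{3^i C_i^V t}\, S_i(\phi(x),\phi(y),t)
\]
for every $0 \le i \le k$. Since the hypothesis on $(\nu,k)$ ensures that every $Q_{\nu+m}$ is nonnegative, the monotonicity $\binom{i}{m} \le \binom{k}{m}$ (for $m \le i \le k$) yields $S_i \le S_k$; together with $C_i^V \le C_k^V$ and $(1/t+i)^i \le (1/t+k+1)^k$ (valid for all $0\le i\le k$ and $t>0$, since the base exceeds $1$), this bounds every $|\partial_z^i q_\nu^V|$ uniformly in $i$ by $(1/t+k+1)^k\, e^{3^k C_k^V t}\, S_k(\phi(x),\phi(y),t)$. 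The $\theta$-factor is controlled by $C_{k,(0,\phi(M))}^\theta$, and the Bell polynomial has only nonnegative coefficients, so $|B_{k,j}(\phi'(x),\dots,\phi^{(k-j+1)}(x))| \le B_{k,j}(C_{k,(0,M)}^\phi,\dots,C_{k,(0,M)}^\phi)$. Summing the inner index $i$ and then the outer index $j$ in (\ref{eq:formula of (dx)^k p(x,y,t)}) collapses to $\mathfrak{T}_k(C_{k,(0,M)}^\phi)$, producing the displayed bound on $\partial_x^k p$.

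For $u_f$, rather than integrate the pointwise bound against $f$, I would exploit the representation $u_f(x,t) = \theta(\phi(x))\, v_h^V(\phi(x),t)$ with $h = (f\circ\psi)/\theta$ derived in the proof of Theorem \ref{thm:main result on p(x,y,t)}. Expanding $\partial_x^k$ of this composition by Faà di Bruno and Leibniz produces a double sum indexed by $(j,i)$ of the same shape as above, but with $\partial_z^i v_h^V(\phi(x),t)$ in place of $\partial_z^i q_\nu^V$. Corollary \ref{cor:asymp of (dz)^k v^V_h} asserts that each $\partial_z^i v_h^V$ is bounded near $0$ for fixed $t$ and that $\sup_{(z,t)\in(0,\phi(M))\times(0,1)} t^i|\partial_z^i v_h^V(z,t)|<\infty$ for $i\le k$. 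The first statement immediately gives boundedness of $\partial_x^k u_f$ near $0$ for fixed $t$, while the second, combined with $t^k\le t^i$ for $i\le k$ and $t\in(0,1)$, yields the claimed $\sup t^k|\partial_x^k u_f|<\infty$ on $(0,M)\times(0,1)$.

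The main technical obstacle is the combinatorial bookkeeping in the inner Leibniz sum: the crude bound produces an extraneous factor $\sum_{i=0}^j\binom{j}{i}=2^j$ that must be absorbed, either into the slack between $(1/t+k+1)^k$ and the tighter $(1/t+k)^k$, or folded into $\mathfrak{T}_k$ via the homogeneity $B_{k,j}(cx_1,\dots,cx_{k-j+1}) = c^j B_{k,j}(x_1,\dots,x_{k-j+1})$. Apart from this bookkeeping and the smoothness/differentiation-under-the-integral prerequisites (ensured by Corollary \ref{cor:smoothness of q^V_nu}), the argument is a direct chain of estimates from Proposition \ref{prop:derivatives of q^V_nu} and Corollary \ref{cor:asymp of (dz)^k v^V_h}.
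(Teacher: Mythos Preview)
Your overall strategy matches the paper's, and the treatment of $u_f$ is exactly what the paper does. The only issue is the one you yourself flag as the ``main technical obstacle'': the extraneous $2^j$ from $\sum_{i=0}^j\binom{j}{i}$. Neither of your proposed fixes works. The ratio $\big(\frac{1}{t}+k+1\big)^k/\big(\frac{1}{t}+k\big)^k$ tends to $1$ as $t\searrow 0$, so there is no uniform-in-$t$ slack to absorb a factor $\ge 2$. And absorbing $2^j$ via the homogeneity $B_{k,j}(2c,\dots,2c)=2^jB_{k,j}(c,\dots,c)$ yields $\mathfrak{T}_k(2C_{k,(0,M)}^{\phi})$, not the $\mathfrak{T}_k(C_{k,(0,M)}^{\phi})$ in the stated bound.

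The paper avoids the $2^j$ altogether by \emph{not} bounding $|\partial_z^i q_\nu^V|$ uniformly in $i$. Instead it first replaces $\binom{j}{i}$ by the larger $\binom{k}{i}$ (valid for $i\le j\le k$), factors out $\sum_j B_{k,j}(C^\phi,\dots)=\mathfrak{T}_k(C^\phi)$, and then handles $\sum_{i=0}^k\binom{k}{i}|\partial_z^i q_\nu^V|$ keeping the $i$-dependence in the bound $\frac{(1+kt)^i}{t^i}\sum_{m=0}^i\binom{i}{m}Q_{\nu+m}$. Swapping the sums in $i$ and $m$ via $\binom{k}{i}\binom{i}{m}=\binom{k}{m}\binom{k-m}{i-m}$ gives
\[
\sum_{m=0}^k\binom{k}{m}Q_{\nu+m}\sum_{p=0}^{k-m}\binom{k-m}{p}\frac{(1+kt)^{p+m}}{t^{p+m}}
\le S_k\cdot\frac{(1+(k+1)t)^k}{t^k},
\]
since the inner sum is $\frac{(1+kt)^m}{t^m}\big(1+\tfrac{1+kt}{t}\big)^{k-m}\le \big(\tfrac{1+(k+1)t}{t}\big)^{k}$. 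This is where the constant $\big(\tfrac{1}{t}+k+1\big)^k$ actually comes from, and no $2^j$ ever appears.
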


\begin{proof}
To prove the estimate on $\partial_{x}^{k}p\left(x,y,t\right)$, we
derive from (\ref{eq:estimate of kth deriv of q^V}) and (\ref{eq:formula of (dx)^k p(x,y,t)})
that $\left|\partial_{x}^{k}p\left(x,y,t\right)\right|$ is bounded
by
\[
\begin{split} & C_{k,\left(0,\phi\left(M\right)\right)}^{\theta}\frac{\left|\phi^{\prime}\left(y\right)\right|}{\theta\left(\phi\left(y\right)\right)}\sum_{j=1}^{k}\sum_{i=0}^{j}\binom{j}{i}\left|\partial_{z}^{i}q_{\nu}^{V}\left(\phi\left(x\right),\phi\left(y\right),t\right)\right|\left|B_{k,j}\left(\phi^{\prime}\left(x\right),\phi^{\prime\prime}\left(x\right),\cdots,\phi^{\left(k-j+1\right)}\left(x\right)\right)\right|\\
\leq & C_{k,\left(0,\phi\left(M\right)\right)}^{\theta}\frac{\left|\phi^{\prime}\left(y\right)\right|}{\theta\left(\phi\left(y\right)\right)}\sum_{j=1}^{k}B_{k,j}\left(C_{k,\left(0,M\right)}^{\phi},C_{k,\left(0,M\right)}^{\phi},\cdots,C_{k,\left(0,M\right)}^{\phi}\right)\sum_{i=0}^{k}\binom{k}{i}\left|\partial_{z}^{i}q_{\nu}^{V}\left(\phi\left(x\right),\phi\left(y\right),t\right)\right|\\
\leq & C_{k,\left(0,\phi\left(M\right)\right)}^{\theta}e^{3^{k}C_{k}^{V}t}\frac{\left|\phi^{\prime}\left(y\right)\right|}{\theta\left(\phi\left(y\right)\right)}\mathfrak{T}_{k}\left(C_{k,\left(0,M\right)}^{\phi}\right)\sum_{i=0}^{k}\binom{k}{i}\frac{\left(1+kt\right)^{i}}{t^{i}}\sum_{m=0}^{i}\binom{i}{m}Q_{\nu+m}\left(\phi\left(x\right),\phi\left(y,t\right)\right)\\
= & C_{k,\left(0,M\right)}^{\theta}e^{3^{k}C_{k}^{V}t}\frac{\left|\phi^{\prime}\left(y\right)\right|}{\theta\left(\phi\left(y\right)\right)}\mathfrak{T}_{k}\left(C_{k,\left(0,M\right)}^{\phi}\right)\sum_{m=0}^{k}\binom{k}{m}Q_{\nu+m}\left(\phi\left(x\right),\phi\left(y,t\right)\right)\sum_{p=0}^{k-m}\binom{k-m}{p}\frac{\left(1+kt\right)^{p+m}}{t^{p+m}}\\
\leq & C_{k,\left(0,\phi\left(M\right)\right)}^{\theta}e^{3^{k}C_{k}^{V}t}\frac{\left|\phi^{\prime}\left(y\right)\right|}{\theta\left(\phi\left(y\right)\right)}\mathfrak{T}_{k}\left(C_{k,\left(0,M\right)}^{\phi}\right)S_{k}\left(\phi\left(x\right),\phi\left(y,t\right)\right)\frac{\left(1+\left(k+1\right)t\right)^{k}}{t^{k}}
\end{split}
\]

As for the last statement, according to (\ref{eq:def of p}), we have
that $u_{f}\left(x,t\right)=v_{h}^{V}\left(\phi\left(x\right),t\right)\theta\left(\phi\left(x\right)\right)$
with $h:=\frac{f\circ\psi}{\theta}$. Using Faà di Bruno's formula
again, we have that
\[
\begin{split}\partial_{x}^{k}u_{f}\left(x,t\right) & =\sum_{j=1}^{k}\sum_{i=0}^{j}\binom{j}{i}\partial_{z}^{i}v_{h}^{V}\left(\phi\left(x\right),t\right)\theta^{\left(j-i\right)}\left(\phi\left(x\right)\right)\cdot B_{k,j}\left(\phi^{\prime}\left(x\right),\phi^{\prime\prime}\left(x\right),\cdots,\phi^{\left(k-j+1\right)}\left(x\right)\right).\end{split}
\]
Therefore, the desired conclusion follows directly from Corollary
\ref{cor:asymp of (dz)^k v^V_h}.
\end{proof}

\section{Examples and Further Questions}

The framework laid out in the previous sections allows us to treat
a wide range of choices of $a\left(x\right)$ and $b\left(x\right)$.
In particular, we will revisit the examples introduced in $\mathsection1.2$,
which we now can solve explicitly with the tools developed in the
previous sections. At the end of the article, we will raise further
questions regarding the ``fitness'' of the global conditions imposed
on $a\left(x\right)$, $b\left(x\right)$ and $V\left(z\right)$. 

\subsection{Examples with $a\left(x\right)=x^{\alpha}$ for $\alpha\in\left(0,2\right)$.}

It should be clear from the preceding discussions that the potential
function $V\left(z\right)$ is the main factor that prevents us from
finding explicit expressions for $q_{\nu}^{V}\left(z,w,t\right)$.
In the case when $V\left(z\right)$ is trivial, e.g., when $V\left(z\right)\equiv0$,
the exact formulas of $p\left(x,y,t\right)$ is readily available.
There are many choices of $a\left(x\right)$ and $b\left(x\right)$
that will reduce $V\left(z\right)$ to zero. Here we will investigate
one notable case when $a\left(x\right)=x^{\alpha}$ for some $\alpha\in\left(0,2\right)$
and $b\left(x\right)\equiv0$. Namely, let us consider, for some $f\in C_{b}\left(\left(0,\infty\right)\right)$,
\begin{equation}
\begin{array}{c}
\partial_{t}u_{f}\left(x,t\right)=x^{\alpha}\partial_{x}^{2}u_{f}\left(x,t\right)\text{ for }\left(x,t\right)\in\left(0,\infty\right)^{2},\\
\lim_{t\searrow0}u_{f}\left(x,t\right)=f\left(x\right)\text{ for }x\in\left(0,\infty\right)\text{ and }\lim_{x\searrow0}u_{f}\left(x,t\right)=0\text{ for }t\in\left(0,\infty\right),
\end{array}\label{eq:IVP with x^(alpha)}
\end{equation}
Let $p_{\alpha}\left(x,y,t\right)$ be the fundamental solution to
(\ref{eq:IVP with x^(alpha)}). We check immediately that for every
$x>0$,
\[
\phi\left(x\right)=\frac{1}{4}\left(\int_{0}^{x}\frac{ds}{s^{\frac{\alpha}{2}}}\right)^{2}=\frac{x^{2-\alpha}}{\left(2-\alpha\right)^{2}}\text{ and }d\left(x\right)=\frac{-\alpha x^{\alpha-1}}{4x^{\frac{\alpha}{2}}}\int_{0}^{x}\frac{ds}{s^{\frac{\alpha}{2}}}+\frac{1}{2}-\nu\equiv0
\]
provided that $\nu=\frac{1-\alpha}{2-\alpha}$. It is clear that the
choices of $\alpha$, $a\left(x\right)$ and $b\left(x\right)$ satisfy
Condition 1-3 trivially and $V\left(z\right)\equiv0$. By Proposition
\ref{prop: q_nu is the fundamental solution} and Theorem \ref{thm:main result on p(x,y,t)},
we have that for every $\left(x,y,t\right)\in\left(0,\infty\right)^{3}$,
\begin{equation}
\begin{split}p_{\alpha}\left(x,y,t\right) & =\frac{x^{\frac{1}{2}}y^{\frac{1}{2}-\alpha}}{t\left(2-\alpha\right)}e^{-\frac{x^{2-\alpha}+y^{2-\alpha}}{\left(2-\alpha\right)^{2}t}}I_{\frac{1}{2-\alpha}}\left(\frac{2}{\left(2-\alpha\right)^{2}}\frac{\left(xy\right)^{1-\frac{\alpha}{2}}}{t}\right)\\
 & =\frac{xy^{1-\alpha}}{t^{\frac{3-\alpha}{2-\alpha}}\left(2-\alpha\right)^{\frac{4-\alpha}{2-\alpha}}}e^{-\frac{x^{2-\alpha}+y^{2-\alpha}}{\left(2-\alpha\right)^{2}t}}\sum_{n=0}^{\infty}\frac{\left(xy\right)^{n\left(2-\alpha\right)}}{t^{2n}\left(2-\alpha\right)^{4n}n!\Gamma\left(n+\frac{3-\alpha}{2-\alpha}\right)}.
\end{split}
\label{eq:def of p_alpha}
\end{equation}
When $\alpha=1$, (\ref{eq:def of p_alpha}) is reduced to (\ref{eq:q_0 of WF model equation}),
as we have expected. 

Based on (\ref{eq:def of p_alpha}) and Proposition \ref{prop: q_nu is the fundamental solution},
we have the following facts.
\begin{prop}
$p_{\alpha}\left(x,y,t\right)$\textbf{ }is smooth on $\left(0,\infty\right)^{3}$
and is the fundamental solution to (\ref{eq:IVP with x^(alpha)}).
For every $t>0$, $\partial_{x}^{k}p_{\alpha}\left(x,y,t\right)$
is bounded when $x,y$ are near 0 for $k=1$ if $\alpha\in\left(1,2\right)$,
for $k\leq2$ if $\alpha\in\left(0,1\right)$, and for every $k\in\mathbb{N}$
if $\alpha=1$.
\end{prop}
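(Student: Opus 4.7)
The first two claims follow directly from the general theory developed in the previous sections. The computation preceding the statement verifies that $a(x)=x^\alpha$ and $b(x)\equiv 0$ fulfill Conditions 1-3 with $\nu=(1-\alpha)/(2-\alpha)<1$ and $d\equiv 0$, whence $\tilde d\equiv 0$, $\theta$ is constant, and $V\equiv 0$; in particular $q_\nu^V=q_\nu$. Theorem \ref{thm:main result on p(x,y,t)} then identifies the function given by (\ref{eq:def of p_alpha}) as the fundamental solution to (\ref{eq:IVP with x^(alpha)}), and smoothness on $(0,\infty)^3$ is visible either from Corollary \ref{cor:smoothness of q^V_nu} together with the smoothness of $\phi$ on $(0,\infty)$, or directly from the absolute and uniform convergence, on compact subsets of $(0,\infty)^3$, of the series in (\ref{eq:def of p_alpha}) and all of its partial derivatives.

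For the boundedness of $\partial_x^k p_\alpha$ near the boundary, one cannot invoke Proposition \ref{prop:regularity of p(x,y,t) and u_f} directly as soon as $\alpha\neq 1$: the transformation $\phi(x)=x^{2-\alpha}/(2-\alpha)^2$ has unbounded first derivative at $0$ when $\alpha\in(1,2)$ and unbounded second derivative at $0$ when $\alpha\in(0,1)$. The plan is instead to work with (\ref{eq:def of p_alpha}) directly. Set $\beta:=2-\alpha\in(0,2)$ and fix $(y,t)\in(0,\infty)^2$. Absorbing the factor $e^{-x^\beta/(\beta^2 t)}$ into the series in (\ref{eq:def of p_alpha}) via a Cauchy product produces an expansion
\[
p_\alpha(x,y,t) \;=\; \sum_{n=0}^\infty c_n(y,t)\,x^{1+n\beta},
\]
valid for all $x\ge 0$, whose coefficients $c_n(y,t)$ are smooth on $(0,\infty)^2$. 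Owing to the factor $1/(n!\,\Gamma(n+1+1/\beta))$ in the denominators, this series and each of its formal termwise $x$-derivatives converges absolutely and uniformly on $[0,M]\times K$ for every $M>0$ and every compact $K\subset(0,\infty)^2$, so differentiation under the summation is legitimate up to any fixed order.

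Differentiating $k$ times in $x$ term by term produces contributions proportional to $c_n(y,t)\prod_{j=0}^{k-1}(1+n\beta-j)\,x^{1+n\beta-k}$. The $n=0$ summand reduces to $c_0(y,t)\partial_x^k x$, which vanishes for $k\ge 2$ and is bounded for $k\le 1$. For $n\ge 1$ the exponent $1+n\beta-k$ is minimized at $n=1$ (since $\beta>0$), giving $x^{1+\beta-k}=x^{3-\alpha-k}$; this remains bounded as $x\searrow 0$ whenever $k\le 3-\alpha$, and the same then holds for all $n\ge 2$. The three regimes now follow by direct inspection: for $\alpha\in(1,2)$, $3-\alpha\in(1,2)$ so $k=1$ is admissible; for $\alpha\in(0,1)$, $3-\alpha\in(2,3)$ so $k\le 2$ is admissible; for $\alpha=1$, $\beta=1$ makes every exponent $1+n\beta$ a non-negative integer, so $x\mapsto p_\alpha(x,y,t)$ is real-analytic at $x=0$ and every $k\in\mathbb{N}$ is admissible.

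The main technical step is the justification of term-by-term differentiation of the rearranged series; the remainder is the power-count bookkeeping above. No further conceptual obstacle arises, because in each regime the boundedness of $\partial_x^k p_\alpha$ for $x$ near $0$ reduces to an explicit exponent inequality in $x$, while smoothness in $y$ (on compact subsets of $(0,\infty)$) is inherited from the coefficients $c_n(y,t)$.
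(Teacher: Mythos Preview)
Your argument is correct and is precisely the kind of direct inspection the paper has in mind: the paper gives no detailed proof here, merely recording the proposition as a consequence of the explicit formula (\ref{eq:def of p_alpha}) and Proposition \ref{prop: q_nu is the fundamental solution}. Your observation that Proposition \ref{prop:regularity of p(x,y,t) and u_f} is not directly applicable (because $\phi'(x)=x^{1-\alpha}/(2-\alpha)$ is unbounded at $0$ for $\alpha\in(1,2)$, and $\phi''$ is unbounded for $\alpha\in(0,1)$) is the right diagnosis, and rewriting $p_\alpha$ as a series in $x^{1+n\beta}$ so that the regularity reduces to the exponent inequality $1+\beta-k\ge 0$, i.e.\ $k\le 3-\alpha$, is exactly the intended computation.

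One small imprecision: your blanket assertion that every termwise $x$-derivative converges uniformly on $[0,M]\times K$ is not literally true once $k>1+\beta$, since then the $n=1$ term carries a negative power of $x$. What is true, and sufficient, is uniform convergence on $[\epsilon,M]\times K$ for every $\epsilon>0$ and every $k$, which legitimizes termwise differentiation on $(0,\infty)$; then, for the specific $k$ in the statement, all exponents $1+n\beta-k$ with $n\ge 1$ are non-negative and the differentiated series extends continuously to $x=0$, hence is bounded on $[0,M]$.

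A second remark concerns the phrase ``when $x,y$ are near $0$''. You deliberately keep $y$ in a compact subset of $(0,\infty)$; this is prudent, because for $\alpha\in(1,2)$ the prefactor $y^{1-\alpha}$ in (\ref{eq:def of p_alpha}) shows $p_\alpha$ itself is unbounded as $y\searrow 0$, so no uniform-in-$(x,y)$ statement on $(0,M)^2$ can hold there even for $k=0$. For $\alpha\in(0,1]$ your coefficients $c_n(y,t)$ carry only the non-negative power $y^{1-\alpha}$ and the entire bound is in fact uniform on $(0,M)^2$; it would be worth saying so explicitly to match the phrasing of Corollary \ref{cor:regularity of bar q at 0}.
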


As we have mentioned in $\mathsection1.2$ that when $\alpha\in\left(0,2\right)$,
with respect to the diffusion associated with (\ref{eq:IVP with x^(alpha)}),
the boundary 0 is either a regular or an exit boundary, which implies
that 0 is attainable and there will be ``mass loss'' through 0 as
soon as $t>0$. On the other hand, as we have seen in $\mathsection1.3$
that when $\alpha=2$, the underlying diffusion process associated
with $x^{2}\partial_{x}^{2}$ will almost surely never hit the boundary
0, which results in 0 being unattainable and ``no mass loss'' through
0 for any $t>0$. Now we have the convenient tools to compute the
amount of mass loss at any time and investigate the transition of
the attainability of 0 quantitatively with respect to $\alpha$ as
$\alpha\nearrow2$.
\begin{cor}
For every $\alpha\in\left(0,2\right)$, we set
\[
m_{\alpha}\left(x,t\right):=1-\int_{0}^{\infty}p_{\alpha}\left(x,y,t\right)dy\text{ for }\left(x,t\right)\in\left(0,\infty\right)^{2}.
\]
Then for every $\left(x,t\right)\in\left(0,\infty\right)^{2}$, as
$\alpha\nearrow2$,
\[
m_{\alpha}\left(x,t\right)=\frac{x^{\alpha-1}e^{-\frac{x^{2-\alpha}}{\left(2-\alpha\right)^{2}t}}}{\Gamma\left(\frac{1}{2-\alpha}\right)\left(\left(2-\alpha\right)^{2}t\right)^{\frac{\alpha-1}{2-\alpha}}}\left(1+\frac{t}{x^{2-\alpha}}\left(2-\alpha\right)+\mathcal{O}\left(\left(2-\alpha\right)^{2}\right)\right),
\]
which implies that 
\[
\lim_{\alpha\nearrow2}\frac{-\ln m_{\alpha}\left(x,t\right)}{\frac{x^{2-\alpha}}{\left(2-\alpha\right)^{2}t}}=1
\]
and the convergence is uniformly fast in $\left(x,t\right)$ in any
compact subset of $\left(0,\infty\right)^{2}$.
\end{cor}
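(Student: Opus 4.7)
The key observation is that for this particular example every nontrivial ingredient of the general construction collapses: since $b\equiv0$ and $d\equiv0$, we have $\tilde d\equiv0$, hence $\theta\equiv1$ and $V\equiv0$. Consequently $q_\nu^{V}=q_\nu$ and Theorem \ref{thm:main result on p(x,y,t)} reduces to $p_\alpha(x,y,t)=q_\nu(\phi(x),\phi(y),t)\phi'(y)$ with $\nu=(1-\alpha)/(2-\alpha)$ and $\phi(x)=x^{2-\alpha}/(2-\alpha)^2$. The plan is to push this identity through the change of variable $w=\phi(y)$ and then exploit the closed-form expression (\ref{eq:total mass of q_nu}) for the total mass of $q_\nu$: this gives
\[
\int_0^\infty p_\alpha(x,y,t)\,dy=\int_0^\infty q_\nu(\phi(x),w,t)\,dw=\frac{\gamma\bigl(1-\nu,\phi(x)/t\bigr)}{\Gamma(1-\nu)},
\]
so that $m_\alpha(x,t)=\Gamma(s,\xi)/\Gamma(s)$ where $s:=1-\nu=1/(2-\alpha)$, $\xi:=\phi(x)/t=x^{2-\alpha}/((2-\alpha)^2 t)$, and $\Gamma(s,\xi)=\int_\xi^\infty u^{s-1}e^{-u}\,du$ is the upper incomplete gamma function.

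Next I would extract the asymptotic expansion of $\Gamma(s,\xi)$ via repeated integration by parts. Writing $\epsilon:=2-\alpha$, so $s=1/\epsilon$ and $\xi=x^\epsilon/(\epsilon^2 t)$, the ratio $s/\xi=\epsilon t/x^\epsilon$ tends to $0$ uniformly for $(x,t)$ in any compact subset of $(0,\infty)^2$. In this regime the standard recursion $\Gamma(s,\xi)=\xi^{s-1}e^{-\xi}+(s-1)\Gamma(s-1,\xi)$ iterated twice yields
\[
\Gamma(s,\xi)=\xi^{s-1}e^{-\xi}\Bigl(1+\tfrac{s-1}{\xi}+\tfrac{(s-1)(s-2)}{\xi^2}+\cdots\Bigr)+R,
\]
with the remainder $R$ easily bounded by the next omitted term times a constant, because the integrand is monotone in $u$ once $u>s-1$ (which is ensured by $\xi\gg s$). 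I would verify this directly by estimating $\int_\xi^\infty u^{s-3}e^{-u}\,du\le \xi^{-2}\Gamma(s,\xi)$.

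Substituting the explicit values, the prefactor simplifies as
\[
\xi^{s-1}e^{-\xi}=\frac{x^{\alpha-1}}{\bigl((2-\alpha)^2 t\bigr)^{(\alpha-1)/(2-\alpha)}}\,e^{-x^{2-\alpha}/((2-\alpha)^2 t)},
\]
matching the stated leading factor once divided by $\Gamma(1/(2-\alpha))$. The first correction is
\[
\frac{s-1}{\xi}=\frac{(1-\epsilon)\epsilon\, t}{x^\epsilon}=\frac{t}{x^{2-\alpha}}(2-\alpha)+\mathcal{O}\bigl((2-\alpha)^2\bigr),
\]
and the next term $(s-1)(s-2)/\xi^2$ is manifestly $\mathcal{O}(\epsilon^2)$, uniformly on compact subsets. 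Collecting, this proves the asymptotic expansion for $m_\alpha(x,t)$.

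For the logarithmic limit, I would take $-\ln m_\alpha(x,t)=\xi+\ln\Gamma(s)-(s-1)\ln\xi-\ln\bigl(1+\mathcal{O}(\epsilon)\bigr)$ from the expansion, divide by $\xi$, and observe that $\xi\sim 1/(\epsilon^2 t)\to\infty$ while Stirling's formula gives $\ln\Gamma(s)\sim \epsilon^{-1}\ln(1/\epsilon)$ and $(s-1)|\ln\xi|=\mathcal{O}(\epsilon^{-1}|\ln\epsilon|)$; both are $o(\xi)$ uniformly on compacts. The only genuinely delicate point (and the main technical obstacle) is to make these asymptotics \emph{uniform} in $(x,t)$ over a compact subset of $(0,\infty)^2$; but compactness supplies uniform upper and lower bounds on $x$ and $t$, so that $x^{\epsilon}=1+\mathcal{O}(\epsilon)$ and $s/\xi=\mathcal{O}(\epsilon)$ uniformly, which suffices to propagate the uniformity through each of the steps above.
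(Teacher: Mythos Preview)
Your proposal is correct and follows essentially the same route as the paper: reduce $m_\alpha(x,t)$ to the upper incomplete gamma function $\Gamma(s,\xi)/\Gamma(s)$ via the change of variable $w=\phi(y)$ and the closed form (\ref{eq:total mass of q_nu}), then extract the asymptotics by integration by parts, and finally invoke Stirling for the logarithmic limit. The only cosmetic difference is that the paper performs one integration by parts and then estimates the remaining integral $\int_0^\infty(1+\tau/T)^{\eta-2}e^{-\tau}\,d\tau=1+\mathcal{O}(\eta^{-1})$ directly, whereas you iterate the recursion $\Gamma(s,\xi)=\xi^{s-1}e^{-\xi}+(s-1)\Gamma(s-1,\xi)$ twice and bound the tail by $\int_\xi^\infty u^{s-3}e^{-u}\,du\le\xi^{-2}\Gamma(s,\xi)$; both yield the same two-term expansion with the same uniform control once $s/\xi=\mathcal{O}(\epsilon)$ on compacts.
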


\begin{proof}
The second statement follows immediately from the first statement
and Stirling's approximation, so we only need to show the first statement.
By (\ref{eq:def of p_alpha}), we have that 
\[
m_{\alpha}\left(x,t\right)=1-\int_{0}^{\infty}p_{\alpha}\left(x,y,t\right)dy=1-\int_{0}^{\infty}q_{\frac{1-\alpha}{2-\alpha}}\left(\frac{x^{2-\alpha}}{\left(2-\alpha\right)^{2}},w,t\right)dw,
\]
which, by (\ref{eq:total mass of q_nu}), is equal to $\frac{1}{\Gamma\left(\frac{1}{2-\alpha}\right)}\int_{\frac{x^{\left(2-\alpha\right)}}{\left(2-\alpha\right)^{2}t}}^{\infty}s^{-\frac{1-\alpha}{2-\alpha}}e^{-s}ds$.
Set $\eta:=\frac{1}{2-\alpha}$ and $T:=\frac{x^{\left(2-\alpha\right)}}{\left(2-\alpha\right)^{2}t}$.
Then, 
\[
\begin{split}m_{\alpha}\left(x,t\right)=\frac{1}{\Gamma\left(\eta\right)}\int_{T}^{\infty}s^{\eta-1}e^{-s}ds & =\frac{T^{\eta-1}e^{-T}}{\Gamma\left(\eta\right)}\left[1+\frac{\eta-1}{T}\int_{0}^{\infty}\left(1+\frac{\tau}{T}\right)^{\eta-2}e^{-\tau}d\tau\right].\end{split}
\]
To complete the proof, it is sufficient to notice that, when $\eta$
is large,
\[
\frac{\eta-1}{T}=\frac{t}{x^{\frac{1}{\eta}}}\frac{1}{\eta}+\mathcal{O}\left(\frac{1}{\eta^{2}}\right)\text{ and }\int_{0}^{\infty}\left(1+\frac{\tau}{T}\right)^{\eta-2}e^{-\tau}d\tau=1+\mathcal{O}\left(\frac{1}{\eta}\right).
\]
\end{proof}
\begin{rem}
From the previous example we notice that $q_{\nu}\left(z,w,t\right)$
and $p_{\alpha}\left(x,y,t\right)$ exhibit different levels of regularity
near 0, even under identical boundary classification and boundary
condition. Putting the special case $\nu=0$ (equivalently, $\alpha=1$)
on the side, we see that when 0 is an exit boundary (i.e., $\nu<0$
and $\alpha\in\left(1,2\right)$), compared with $p_{\alpha}\left(x,y,t\right)$,
$q_{\nu}\left(z,w,t\right)$ has as many as or more orders of bounded
derivatives in the backward variable near 0; when 0 is a regular boundary
(i.e., $\nu\in\left(0,1\right)$ and $\alpha\in\left(0,1\right)$),
$\partial_{z}q_{\nu}\left(z,w,t\right)$ blows up as $z$ tends to
0, but $\partial_{x}^{2}p_{\alpha}\left(x,y,t\right)$ stays bounded
all the way to 0. Technically speaking, this difference in the regularity
level between $q_{\nu}\left(z,w,t\right)$ and $p_{\alpha}\left(x,y,t\right)$
is caused by the change of variable in (\ref{eq:def of p_alpha}).
Heuristically speaking, we may be able to predict this difference
if we view these derivatives as ``indicators'' of how sensitive
the hitting distribution of the underlying process is with respect
to the backward variable. Assume that $z$ and $x$ are close to 0.
In the scenario of exit boundary, for $q_{\nu}\left(z,w,t\right)$,
the constant negative drift $\nu\partial_{z}$ overtakes the diffusion
coefficient in controlling the diffusion process and makes the hitting
distribution less sensitive to $z$; while for $p_{\alpha}\left(x,y,t\right)$,
the relatively high degeneracy in $x^{\alpha}\partial_{x}^{2}$ makes
the hitting distribution more sensitive to $x$. In the scenario of
regular boundary, for $q_{\nu}\left(z,w,t\right)$, the drift is now
positive and hence the diffusion coefficient $z\partial_{z}^{2}$
has a bigger impact on the hitting distribution; since $x^{\alpha}\partial_{x}^{2}$
is less degenerate than $z\partial_{z}^{2}$ in this scenario, the
hitting distribution associated with $p_{\alpha}\left(x,y,t\right)$
is less sensitive in $x$ than that with $q_{\nu}\left(z,w,t\right)$
in $z$. 
\end{rem}

We can also consider variations of $L=x^{\alpha}\partial_{x}^{2}$,
e.g., the one with a drift coefficient that vanishes at the boundary
$0$ following another power of $x$. Assume that $\beta\geq$1 is
a constant, and $\varphi\in C^{\infty}\left(\left(0,\infty\right)\right)$
is such that $C_{k}^{\varphi}<\infty$ for every $k\in\mathbb{N}$,
$\varphi$ decays faster than any polynomial at infinity, and $\lim_{x\searrow0}\varphi\left(x\right)\neq0$.
We consider the operator $L=x^{\alpha}\partial_{x}^{2}+x^{\beta}\varphi\left(x\right)\partial_{x}$
and the associated boundary/initial value problem 
\begin{equation}
\begin{array}{c}
\partial_{t}u_{f}\left(x,t\right)=x^{\alpha}\partial_{x}^{2}u_{f}\left(x,t\right)+x^{\beta}\varphi\left(x\right)\partial_{x}u_{f}\left(x,t\right)\text{ for }\left(x,t\right)\in\left(0,\infty\right)^{2},\\
\lim_{t\searrow0}u_{f}\left(x,t\right)=f\left(x\right)\text{ for }x\in\left(0,\infty\right)\text{ and }\lim_{x\searrow0}u_{f}\left(x,t\right)=0\text{ for }t\in\left(0,\infty\right),
\end{array}\label{eq:IVP with x^(alpha) and drift}
\end{equation}
for some $f\in C_{b}\left(\left(0,\infty\right)\right)$. Let $p\left(x,y,t\right)$
be the fundamental solution to (\ref{eq:IVP with x^(alpha) and drift}).
Same as above, we determine that
\[
\phi\left(x\right)=\frac{x^{2-\alpha}}{\left(2-\alpha\right)^{2}}\text{ and }d\left(x\right)=\frac{x^{\beta+1-\alpha}\varphi\left(x\right)}{2-\alpha}\text{ for every }x>0,
\]
and $\nu=\frac{1-\alpha}{2-\alpha}$. Given the choice of $\beta$
and $\varphi\left(x\right)$, it is easy to verify that Condition
1-3 are satisfied. Therefore, according to Theorem \ref{thm:main result on p(x,y,t)}
and Corollary \ref{cor:p_approx. ratio estimate}, we have the following
result.
\begin{prop}
\label{prop:p(x,y,t) of x^alapha(dx)^2 and a drift}We set 
\[
\varLambda\left(x\right):=-\frac{x^{2\beta-\alpha}\varphi^{2}\left(x\right)}{4}-\frac{\left(\beta-\alpha\right)x^{\beta-1}\varphi\left(x\right)}{2}-\frac{x^{\beta}\varphi^{\prime}\left(x\right)}{2}\text{ for }x>0,
\]
and, for every $\left(x,y,t\right)\in\left(0,\infty\right)^{3}$,
define $p_{\alpha,0}\left(x,y,t\right):=p_{\alpha}\left(x,y,t\right)e^{\frac{1}{2}\int_{x}^{y}u^{\beta-\alpha}\varphi\left(u\right)du}$
and recursively
\[
p_{\alpha,n}\left(x,y,t\right):=\int_{0}^{t}\int_{0}^{\infty}p_{\alpha,0}\left(x,\zeta,\tau\right)\varLambda\left(\zeta\right)p_{\alpha,n-1}\left(\zeta,y,t-\tau\right)dud\tau\text{ for }n\geq1.
\]
Then, for every $\left(x,y,t\right)\in\left(0,\infty\right)^{3}$,
$\sum_{n=0}^{\infty}p_{\alpha,n}\left(x,y,t\right)$ converges absolutely
and 
\[
p\left(x,y,t\right):=\sum_{n=0}^{\infty}p_{\alpha,n}\left(x,y,t\right)
\]
is the fundamental solution to (\ref{eq:IVP with x^(alpha) and drift}).

In addition, for every $t>0$,
\[
\sup_{x,y\in(0,\infty)^{2}}\left|\frac{p\left(x,y,t\right)}{p_{\alpha}\left(x,y,t\right)e^{\frac{1}{2}\int_{x}^{y}u^{\beta-\alpha}\varphi\left(u\right)du}}-1\right|\leq e^{\left\Vert \varLambda\right\Vert _{u}t}-1,
\]
and for every $k\in\mathbb{N}$, 
\[
\sup_{x,y\in(0,\infty)^{2}}\left|\frac{p\left(x,y,t\right)-\sum_{j=0}^{k}\,p_{\alpha,j}\left(x,y,t\right)}{p_{\alpha}\left(x,y,t\right)e^{\frac{1}{2}\int_{x}^{y}u^{\beta-\alpha}\varphi\left(u\right)du}}\right|\leq e^{\left\Vert \varLambda\right\Vert _{u}t}\frac{\left\Vert \varLambda\right\Vert _{u}^{k+1}}{\left(k+1\right)!}t^{k+1}.
\]
\end{prop}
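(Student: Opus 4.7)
The plan is to recognize Proposition 5.2 as the translation of Theorem 3.8 (the main theorem on $p(x,y,t)$) and Corollary 3.9 (the estimates for $p^{approx.}$) into the specific setting $a(x) = x^\alpha$, $b(x) = x^\beta \varphi(x)$, with the Duhamel expansion of $q_\nu^V$ pushed forward through the diffeomorphism $z = \phi(x)$.

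First, I would verify Conditions 1--3 of $\mathsection 1.2$ for this choice. Smoothness and positivity of $a$ are immediate, and the integrability conditions on $a^{-1/2}$ at $0$ and at $\infty$ hold because $\alpha \in (0,2)$. A short computation gives $d(x) = x^{\beta+1-\alpha}\varphi(x)/(2-\alpha)$ and $\nu = (1-\alpha)/(2-\alpha) < 1$. The two global bounds in Condition 3 reduce to boundedness on $(0,\infty)$ of $x^{\beta-\alpha/2}\varphi(x)$ and of $(\beta+1-\alpha)x^{\beta-1}\varphi(x) + x^\beta \varphi'(x)$; both hold because the assumption $\beta \geq 1$ controls behavior near $0$, while $\varphi$ and $\varphi'$ decay super-polynomially at $\infty$.

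Second, I would compute $\theta$ and $V$ explicitly. With $\phi'(x) = x^{1-\alpha}/(2-\alpha)$, the substitution $u = \phi(s)$ inside $\theta(z) = \exp(-\int_0^z \tilde d(u)/(2u)\, du)$ yields
\[
\frac{\theta(\phi(x))}{\theta(\phi(y))} = \exp\left(\frac{1}{2}\int_x^y s^{\beta-\alpha}\varphi(s)\, ds\right),
\]
so the function $p^{approx.}(x,y,t) = p_\alpha(x,y,t)\exp\bigl(\frac{1}{2}\int_x^y s^{\beta-\alpha}\varphi(s)\,ds\bigr)$ from Corollary 3.9 coincides with the stated $p_{\alpha,0}(x,y,t)$. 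A parallel computation, using the chain rule $\tilde d'(\phi(x)) = d'(x)/\phi'(x)$ and the identity $1-\nu = 1/(2-\alpha)$, verifies $V(\phi(x)) = \Lambda(x)$; in particular $\|\Lambda\|_u = \|V\|_u$.

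Third, I would invoke Theorem 3.8 to write $p(x,y,t) = q_\nu^V(\phi(x),\phi(y),t)\,\theta(\phi(x))\phi'(y)/\theta(\phi(y))$, substitute the Duhamel series $q_\nu^V = \sum_n q_{\nu,n}$, and set
\[
p_{\alpha,n}(x,y,t) := q_{\nu,n}\bigl(\phi(x),\phi(y),t\bigr)\,\frac{\theta(\phi(x))}{\theta(\phi(y))}\phi'(y).
\]
The $n = 0$ term matches the definition in the proposition by the previous paragraph. For $n \geq 1$, substituting $\xi = \phi(\zeta)$ in the recursion defining $q_{\nu,n+1}$ and reversing $\tau \mapsto t-\tau$ produces exactly the recursion stated in Proposition 5.2, with the internal $\theta(\phi(\zeta))$ factors cancelling and the Jacobian $\phi'(\zeta)$ absorbed into the change of variable. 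Absolute convergence of the resulting series and the two ratio bounds claimed in the conclusion then follow directly from the estimate $|q_{\nu,n}(z,w,t)| \leq (t\|V\|_u)^n q_\nu(z,w,t)/n!$ established during the construction of $q_\nu^V$, together with Corollary 3.9, via the identification $p^{approx.} = p_{\alpha,0}$. The main technical point is the careful bookkeeping of the $\theta$ and Jacobian factors while pushing the Duhamel recursion through $\phi$; every other ingredient is a direct substitution into results already proved in Sections 2 and 3.
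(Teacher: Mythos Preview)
Your proposal is correct and follows essentially the same route as the paper: compute $\theta(\phi(x))$ and $V(\phi(x))=\Lambda(x)$, define $p_{\alpha,n}$ as the push-forward of $q_{\nu,n}$ through $\phi$ (inserting and cancelling $\theta(\phi(\zeta))$ in the recursion), and then read off the convergence and the two ratio bounds from Theorem~\ref{thm:main result on p(x,y,t)} and Corollary~\ref{cor:p_approx. ratio estimate}. Your explicit mention of the substitution $\tau\mapsto t-\tau$ to match the exact form of the recursion in the proposition is a small clarification the paper omits.
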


\begin{proof}
Following the notations in $\mathsection3$, one easily check that
for every $x>0$,
\[
\theta\left(\phi\left(x\right)\right)=\exp\left(-\frac{1}{2}\int_{0}^{x}u^{\beta-\alpha}\varphi\left(u\right)du\right)\text{ and }V\left(\phi\left(x\right)\right)=\varLambda\left(x\right).
\]
Apparently we want to define, for $\left(x,y,t\right)\in\left(0,\infty\right)^{3}$
and $n\in\mathbb{N}$,
\[
p_{\alpha,n}\left(x,y,t\right):=q_{\nu,n}\left(\phi\left(x\right),\phi\left(y\right),t\right)\frac{\theta\left(\phi\left(x\right)\right)}{\theta\left(\phi\left(y\right)\right)}\phi^{\prime}\left(y\right),
\]
which leads to
\[
p_{\alpha,0}\left(x,y,t\right)=p_{\alpha}\left(x,y,t\right)e^{\frac{1}{2}\int_{x}^{y}u^{\beta-\alpha}\varphi\left(u\right)du}
\]
and for $n\geq1$, by (\ref{eq:recursion n->n+1}), 
\[
\begin{split}p_{\alpha,n}\left(x,y,t\right) & =\int_{0}^{t}\int_{0}^{\infty}q_{\nu}\left(\phi\left(x\right),\xi,t-\tau\right)V\left(\xi\right)q_{\nu,n}\left(\xi,\phi\left(y\right),\tau\right)d\xi d\tau\cdot e^{\frac{1}{2}\int_{x}^{y}u^{\beta-\alpha}\varphi\left(u\right)du}\phi^{\prime}\left(y\right)\\
 & =\int_{0}^{t}\int_{0}^{\infty}q_{\nu}\left(\phi\left(x\right),\phi\left(\zeta\right),t-\tau\right)\phi^{\prime}\left(\zeta\right)e^{\frac{1}{2}\int_{x}^{y}u^{\beta-\alpha}\varphi\left(u\right)du}\\
 & \hspace{2cm}\cdot V\left(\phi\left(\zeta\right)\right)q_{\nu,n-1}\left(\phi\left(\zeta\right),\phi\left(y\right),\tau\right)e^{\frac{1}{2}\int_{x}^{y}u^{\beta-\alpha}\varphi\left(u\right)du}\phi^{\prime}\left(y\right)d\zeta d\tau\\
 & =\int_{0}^{t}\int_{0}^{\infty}p_{\alpha,0}\left(x,\zeta,t-\tau\right)\varLambda\left(\zeta\right)p_{\alpha,n-1}\left(\zeta,y,\tau\right)d\zeta d\tau.
\end{split}
\]
The rest follows immediately from Theorem \ref{thm:main result on p(x,y,t)}
and Corollary \ref{cor:p_approx. ratio estimate}.
\end{proof}

\subsection{Further questions.}

Although we are primarily interested in the near-boundary behaviors
of the solutions and the fundamental solutions, the results we obtained
in the previous sections, e.g., Corollary \ref{cor:p_approx. ratio estimate}
and Proposition \ref{prop:regularity of p(x,y,t) and u_f}, concern
the properties of these functions on the entire domain $\left(0,\infty\right)$.
As a consequence, our arguments rely on the global conditions such
as Condition 1-3 on $a\left(x\right)$ and $b\left(x\right)$, and
our results involve the global bounds of the coefficients such as
$C_{k}^{V}$. This set-up certainly puts strong constraints on the
coefficients that can be treated with our method. Meanwhile, if one
only focuses on the behaviors of the fundamental solutions near 0,
then one would expect that only the near-0 properties of the coefficients
matter. For example, in the example $L=x^{\alpha}\partial_{x}^{2}+x^{\beta}\varphi\left(x\right)\partial_{x}$
discussed above, it is reasonable to believe that, when $x,y$ are
near 0, $p\left(x,y,t\right)$ behaves similarly as if the operator
were $L=x^{\alpha}\partial_{x}^{2}+cx^{\beta}\partial_{x}$ where
$c:=\lim_{x\searrow0}\varphi\left(x\right)$, and the actual formula
of $\varphi$ should not have any impact on the regularity of $u_{f}\left(x,t\right)$
and $p\left(x,y,t\right)$ near 0. Thus, if we could replace, at least
locally near 0, ``$\varphi\left(x\right)$'' by ``$cx^{\beta}$''
in the derivation of Proposition \ref{prop:p(x,y,t) of x^alapha(dx)^2 and a drift},
then the construction of $p\left(x,y,t\right)$ would be more accessible,
and it would even be possible to derive sharp estimates on $p\left(x,y,t\right)$
that are in closed-form expressions. We hope to make such considerations
rigorous in the next step. To proceed from here, we aim to find a
way to ``localize'' the methods and the results from the previous
sections, and to relax the global conditions on $a\left(x\right)$
and $b\left(x\right)$. This idea of localization is again inspired
by \cite{siamwfeq} and based on a probabilistic approach of examining
the recursions of the associated diffusion process in a neighborhood
of 0. We will carry out such a project in the sequel to this paper,
in which we will re-derive the near-0 estimates on the solutions and
the fundamental solutions that only involve local bounds of the coefficients.
In this case more general $a\left(x\right)$ and $b\left(x\right)$
can be treated, including those who do not behave well, such as blowing
up, on $\left(0,\infty\right)$ away from 0. 

\bibliographystyle{plain}
\bibliography{mybib}

\begin{thebibliography}{10}

\bibitem{AthreyaBarlowBassPerkins02}
S.~Athreya, M.~Barlow, R.~Bass, and E.~Perkins.
\newblock Degenerate stochastic differential equations and super-markov chains.
\newblock {\em Probability Theory and Related Fields}, 123:484--520, 2002.

\bibitem{deg_evo_eq_reg_semigroup}
V.~Barbu, A.~Favini, and S.~Romanelli.
\newblock Degenerate evolution equations and regularity of their associated
  semigroups.
\newblock {\em Funkcialaj Ekvacioj}, 39:421--448, 1996.

\bibitem{BassPerkins02}
R.~Bass and E.~Perkins.
\newblock Degenerate stochastic differential equations with holder continuous
  coefficients and super-markov chains.
\newblock {\em Transactions of the American Mathematical Society},
  355(1):373--405, 2002.

\bibitem{deg_selfad_evoeq_unitint}
M.~Campiti, G.~Metafune, and D.~Pallara.
\newblock Degenerate self-adjoint evolution equations on the unit interval.
\newblock {\em Semigroup Forum}, 57:1--36, 1998.

\bibitem{wellposed_mart_deg_diff_dynamic_population}
S.~Cerrai and P.~Cl\'{e}ment.
\newblock Well-posedness of the martingale problem for some degenerate
  diffusion processes occurring in dynamics of populations.
\newblock {\em Bull. Sci. Math.}, 128:355--389, 2004.

\bibitem{siamwfeq}
L.~Chen and D.~Stroock.
\newblock The fundamental solution to the wright-fisher equation.
\newblock {\em SIAM J. Math. Anal.}, 42(2):539--567, 2010.

\bibitem{uniq_law_path_uniq_SDE}
A.~S. Cherny.
\newblock On the uniqueness in law and the pathwise uniqueness for stochastic
  differential equations.
\newblock {\em Theory of Probability and Its Applications}, 46(3):483--497,
  2001.

\bibitem{singular_stochastic_diff_equa}
A.~S. Cherny and H.-J. Engelbert.
\newblock Singular stochastic differential equations.
\newblock {\em Springer, Lecture Notes in Mathematics Series No. 1858}, 2005.

\bibitem{C0_semigroup_diffop_ventcel_bdrycond}
P.~Cl\'{e}ment and C.~A. Timmermans.
\newblock On $c_0$-semigroups generated by differential operators satisfying
  ventcel's boundary conditions.
\newblock {\em Indag. Math.}, 89:379--387, 1986.

\bibitem{Kimura}
J.~Crow and M.~Kimura.
\newblock Introduction to population genetics theory.
\newblock 1970.
\newblock Harper and Row, New York.

\bibitem{meas_Markov_proc}
D.~A. Dawson.
\newblock Measure-valued markov processes.
\newblock {\em Lecture Notes in Mathematics, Spring-Verlag, Berlin},
  1451:417--472, 1993.
\newblock Ecole d'Et\'{e} de Probabilit\'{e}s de Saint Flour 1991.

\bibitem{resolv_est_FV_uniq_marting}
D.~A. Dawson and P.~March.
\newblock Resolvent estimates for fleming-viot operators and uniqueness of
  solutions to related martingale problems.
\newblock {\em J. Funct. Anal.}, 132:417--472, 1995.

\bibitem{strong_Markov_locmart_solu_1D_SDE}
H.-J. Engelbert and W.~Schmidt.
\newblock Strong markov continuous local martingales and solutions of
  one-dimensional stochastic differential equations, i, ii, iii.
\newblock {\em Math. Machr.}, 143,144,151:167--184,241--281,149--197, 1989,
  1991.

\bibitem{wfeq_Epstein_Mazzeo}
C.~L. Epstein and R.~Mazzeo.
\newblock Wright-fisher diffusion in one dimension.
\newblock {\em SIAM J. Math. Anal.}, 42(2):568--608, 2010.

\bibitem{EM11}
C.~L. Epstein and R.~Mazzeo.
\newblock Degenerate diffusion operators arising in population biology.
\newblock {\em Annals of Mathematics Studies}, 185, 2013.
\newblock Princeton University Press, Princeton, NJ.

\bibitem{EM13}
C.~L. Epstein and R.~Mazzeo.
\newblock The geometric microlocal analysis of generalized kimura and heston
  diffusions.
\newblock {\em Analysis and Topology in Nonlinear Differential Equations.
  Progress in Nonlinear Differential Equations and Their Applications},
  85:241--266, 2014.

\bibitem{EM14}
C.~L. Epstein and R.~Mazzeo.
\newblock Harnack inequalities and heat-kernel estimates for degenerate
  diffusion operators arising in population biology.
\newblock {\em Applied Mathematics Research eXpress}, 2016(2):217--280, 2016.

\bibitem{FeynmanKac_HarnackIneq_deg_diff}
C.~L. Epstein and C.~A. Pop.
\newblock The feynman-kac formula and harnack inequality for degenerate
  diffusions.
\newblock {\em Annuals of Probability}, 45(5):3336--3384, 2017.

\bibitem{tran_prob_deg_diff_manifold}
C.~L. Epstein and C.~A. Pop.
\newblock Transition probabilities for degenerate diffusions arising in
  population genetics.
\newblock {\em Probability Theory and Related Fields}, 173:537--603, 2019.

\bibitem{Ethier76}
S.~N. Ethier.
\newblock A class of degenerate diffusion processes occurring in population
  genetics.
\newblock {\em Communications on Pure and Applied Math}, 29:483--493, 1976.

\bibitem{FV_processes_popul_gen}
S.~N. Ethier and T.~Kurtz.
\newblock Fleming-viot processes in populations genetics.
\newblock {\em SIAM J. Control Optim.}, 31:345--386, 1993.

\bibitem{measure_value_proc_popul_gen}
W.~H. Fleming and M.~Viot.
\newblock Some measure-valued markov processes in populations genetics theory.
\newblock {\em Indiana Univ. Math. J.}, 28:817--843, 1979.

\bibitem{deg_parabolic_wentzel_bdrycond}
J.~A. Goldstein and C.~Y. Lin.
\newblock Degenerate parabolic problems and wentzel boundary condition.
\newblock {\em Semigroup Theory and Applications, Lecture Notes in Pure and
  Applied Mathematics No. 116}, pages 189--199, 1989.

\bibitem{highly_deg_parabolic_bvp}
J.~A. Goldstein and C.~Y. Lin.
\newblock Highly degenerate parabolic boundary value problems.
\newblock {\em Differential Integral Equations}, 2:216--227, 1989.

\bibitem{Karlin_Taylor}
S.~Karlin and H.~Taylor.
\newblock A second course in stochastic processes.
\newblock {\em Academic Press}, 1981.

\bibitem{est_dist_stoch_integral}
N.~V. Krylov.
\newblock On some estimates of the distribution of stochastic integrals.
\newblock {\em Izv. Akad. Nank SSSR Ser. Math.}, 38:228--248, 1974.

\bibitem{deg_SDE_non-Lip_coeff}
K.~S. Kumar.
\newblock A class of degenerate stochastic differential equations with
  non-lipschitz coefficients.
\newblock {\em Proc. Indian Acad. Sci. (Math. Sci.)}, 123(3):443--454, 2013.

\bibitem{DW_suerproc_meas_diff}
E.~Perkins.
\newblock Dawson-watanabe superprocesses and measure-valued diffusions.
\newblock {\em Lecture Notes in Mathematics, Spring-Verlag, Berlin}, 1781,
  2001.
\newblock Ecole d'Et\'{e} de Probabilit\'{e}s de Saint Flour 1999.

\bibitem{C0_smooth_parab_Kimura_op}
C.~A. Pop.
\newblock $c^0$-estimates and smoothness of solutions to the parabolic equation
  defined by kimura operators.
\newblock {\em Journal of Functional Analysis}, 272(1):47--82, 2017.

\bibitem{ExiUniq_Markov_Kimura_diff_singulardrift}
C.~A. Pop.
\newblock Existence, uniqueness and the strong markov property of solutions to
  kimura diffusions with singular drift.
\newblock {\em Trans. Amer. Math. Soc.}, 369:5543--5579, 2017.

\bibitem{PDEStroock}
D.~Stroock.
\newblock Partial differential equations for probabilists.
\newblock {\em Cambridge Univ. Press}, 2008.

\bibitem{diff_conti_coeff}
D.~Stroock and S.~R.~S. Varadhan.
\newblock Diffusion processes with continuous coefficients, i, ii.
\newblock {\em Commun. Math. Phys.}, 22:345--400,479--530, 1969.

\bibitem{multi_dim_diff_proc}
D.~Stroock and S.~R.~S. Varadhan.
\newblock Multidimensional diffusion processes.
\newblock {\em Springer-Verlag, Grundlehren Series No. 233}, 2006.

\bibitem{path_unique_SDE_non-Lip_coeff}
J.~Swart.
\newblock Pathwise uniqueness for a sde with non-lipschitz coefficients.
\newblock {\em Stoch. Proc. Appl.}, 98:131--149, 2002.

\bibitem{analy_semigroup_deg_ellip_operator_nonlinear_Cauchy}
V.~Vespri.
\newblock Analytic semigroups, degenerate elliptic operators and applications
  to nonlinear cauchy problems.
\newblock {\em Ann. Mat. Pura Appl.}, CLV:353--388, 1989.

\bibitem{BesselFunctions}
G.~N. Watson.
\newblock A treatise on the theory of bessel functions, 2nd edition.
\newblock {\em Cambridge Univ. Press}, 1995.

\bibitem{Yamada-Watanabe}
T.~Yamada and S.~Watanabe.
\newblock On the uniqueness of solutions of stochastic differential equations.
\newblock {\em J. Math. Kyoto Univ.}, 11(1):155--167, 1971.

\bibitem{transf_phasesp_diffproc_removedrift}
A.~K. Zvonkin.
\newblock A transformation of the phase space of a diffusion process that will
  remove the drift.
\newblock {\em Mat. Sb.}, 93:129--149, 1974.

\end{thebibliography}

\end{document}